\newtheorem{theorem}{Theorem}[section]
\newtheorem{prop}[theorem]{Proposition}
\newtheorem{lemma}[theorem]{Lemma}
\newtheorem{cor}[theorem]{Corollary}
\theoremstyle{definition}
\newtheorem{defn}[theorem]{Definition}
\theoremstyle{remark}
\newtheorem{rmk}[theorem]{Remark}
\newtheorem*{theoremA*}{\bf Theorem A}
\newtheorem*{theoremB*}{\bf Theorem B}
\newtheorem{ex}[theorem]{Example}
\newtheorem{fact}[theorem]{\bf Fact}
\begin{document}

\newcommand{\val}{\mathcal{O}}
\newcommand{\can}{\mathcal{O}_c}
\newcommand{\M}{\mathcal{M}}
\newcommand{\Nod}{\mathcal{N}}
\newcommand{\fun}{\mathcal{F}}
\newcommand{\gun}{\mathcal{G}}
\newcommand{\lan}{\mathcal{L}}
\newcommand{\cat}{\mathcal{C}}
\newcommand{\pideal}{\mathfrak{p}}
\newcommand{\qideal}{\mathfrak{q}}
\newcommand{\sch}{\mathcal{L}_{sch}}
\newcommand{\alg}{\mathcal{L}_{alg}}
\newcommand{\Xn}{X^{\otimes n}}
\newcommand{\X}{X^f}
\newcommand{\Xm}{\mathcal{X}}
\newcommand{\Z}{\mathbb{Z}}
\newcommand\blfootnote[1]{%
  \begingroup
  \renewcommand\thefootnote{}\footnote{#1}%
  \addtocounter{footnote}{-1}%
  \endgroup
}

\title{Canonical Valuations and the Birational Section Conjecture}
\author{K. J. Str{\o}mmen}

\maketitle

\begin{abstract}
We develop a notion of a `canonical $\cat$-henselian valuation' for a class $\cat$ of field extensions, generalizing the construction of the canonical henselian valuation of a field. We use this to show that the $p$-adic valuation on a finite extension $F$ of $\mathbb{Q}_p$ can be recovered entirely (or up to some indeterminacy of the residue field) from various small quotients of $G_F$, the absolute Galois group of $F$. In particular, it can be recovered fully from the maximal solvable quotient. We use this to prove several versions of the birational section conjecture for varieties over $p$-adic fields.
\end{abstract}

\blfootnote{\textit{2010 Mathematics Subject Classification.} Primary 12J10, 11G25. Secondary 12E30}
\blfootnote{\textit{Keywords---} Valuation theory, Galois groups, anabelian geometry, section conjecture, $p$-adic fields, rigid elements.}

\section{Introduction}

Let $X/K$ be a complete, smooth and geometrically irreducible curve over a field $K$, with function field $F:=K(X)$. Let $\hat{F}$ be any Galois extension, and put $\hat{K}:=K \cap \hat{F}$. Then the canonical projection map of Galois groups $pr:Gal(\hat{F}/F) \rightarrow Gal(\hat{K}/K)$ sits in an exact sequence
\begin{equation}
1 \rightarrow Gal(\hat{F}/F\hat{K}) \rightarrow Gal(\hat{F}/F) \rightarrow Gal(\hat{K}/K) \rightarrow 1 
\end{equation}
Given any $a \in X(K)$, we can assign to it a `bouquet' of group-theoretic sections $s_a: Gal(\hat{K}/K) \rightarrow Gal(\hat{F}/F)$. Indeed, let $v_a$ be the valuation on $F$ corresponding to $a$, and $w$ the valuation on $F\hat{K}$ corresponding to a preimage of $a$ in $\hat{X}:=X \otimes_K \hat{K}$ (so $w$ extends $v$). If we let $I_w$ and $D_w$ denote the inertia and decomposition group of $w/v$ inside $Gal(\hat{F}/F)$, then we get by Hilbert Decomposition Theory a commutative diagram
\begin{center}
\begin{tikzpicture}[>=angle 90]
\matrix(d)[matrix of math nodes, row sep=3em, column sep=2em, text height=1.5ex, text depth=0.25ex]
{1 & Gal(\hat{F}/F\hat{K}) & Gal(\hat{F}/F) & Gal(\hat{K}/K) & 1 \\
1 & I_w & D_w & G_w & 1\\};
\path[->, font=\scriptsize]
 (d-1-1) edge node[above]{} (d-1-2)
 (d-1-2) edge node{} (d-1-3)
 (d-1-3) edge node[above]{$pr$} (d-1-4)
 (d-1-4) edge node{} (d-1-5)
 (d-2-1) edge node[above]{} (d-2-2)
 (d-2-2) edge node{} (d-2-3)
 (d-2-3) edge node[above]{} (d-2-4)
 (d-2-4) edge node{} (d-2-5)
 (d-2-2) edge node{} (d-1-2)
 (d-2-3) edge node{} (d-1-3)
 (d-2-4) edge node[right]{$\simeq$} (d-1-4)
 ;
\end{tikzpicture}
\end{center}
with exact rows. Here $G_w$ denotes the Galois group of the residue field extension. It is known that the bottom row admits sections (see e.g. \cite{kpr}). Any choice of such induces a section $s_w$ of (4.1) such that $s(Gal(\hat{K}/K)) \subset D_w$, which is unique up to conjugation by an element of $Gal(\hat{F}/F\hat{K})$. Any member of the `bouquet' of sections obtained in this manner is said to lie over $a$. In a similar manner, if $v$ is a valuation which is trivial on $K$ and has residue field $K$, the same discussion shows that $v$ induces a `bouquet' of sections which are said to lie over $v$. We call such valuations {\bf $K$-valuations}

If we let $\mathcal{S}_{\hat{F}}$ denote the set of sections of (1) modulo conjugation, we have thus defined a map
\begin{equation}
\Psi_{\hat{F}}: X(K) \rightarrow \mathcal{S}_{\hat{F}}. 
\end{equation}

In particular, taking $\hat{F}=\overline{K(X)}$, this gives a map from $X(K)$ to sections of the exact sequence
\begin{equation}
1 \rightarrow G_{\overline{K}(X)} \rightarrow G_{K(X)} \rightarrow G_K \rightarrow 1,
\end{equation}
where for any field $F$ we let $G_F$ denote its absolute Galois group. As part of his visionary programme of `anabelian geometry', outlined in his famous ``Esquisse d'un Programme'' (see the appendix of \cite{schneps}), Grothendieck made the following conjecture:\\

\noindent {\bf Birational Section Conjecture.} (A. Grothendieck)
\emph{Let $K$ be a field finitely generated over $\mathbb{Q}$ or a finite extension of $\mathbb{Q}_p$ for some prime $p$. Then 
\begin{equation}
\Psi_{\overline{K}}: X(K) \rightarrow \mathcal{S}_{\overline{K}} \nonumber
\end{equation} 
is a bijection. In particular, the existence of a section of (3) implies the existence of a rational point on $X$.}\\

In \cite{koe3}, Koenigsmann establishes the local version of this conjecture, i.e. the case where $K$ is a finite extension of $\mathbb{Q}_p$. Later, Pop showed in \cite{popmeta} the even stronger result that $\Psi_{F''}:X(K) \rightarrow S_{F''}$ is a bijection, where $F''$ denotes the maximal elementary $\Z/p$ meta-abelian extension of $F$, with $F$ a finite extension of $\mathbb{Q}_p$ containing a primitive $p$-th root of unity.\footnote{He proves a slightly weaker result in the case when $F$ does not contain roots of unity which still implies Koenigsmann's original result.}

In this note we aim to show a result somewhere in between, namely that one can take $\hat{F}=F^{solv}$, the maximal solvable extension of $F$:\\

\begin{theorem}
Let $K$ be a finite extension of $\mathbb{Q}_p$. If $F^{solv}$ denotes the maximal solvable extension of $F$, then the map
\begin{equation}
\Psi_{F^{solv}}:X(K) \rightarrow \mathcal{S}_{F^{solv}} \nonumber
\end{equation}
is a bijection.\\
\end{theorem}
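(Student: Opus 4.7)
The plan is to establish both injectivity and surjectivity of $\Psi_{F^{solv}}$ by adapting the strategy of Koenigsmann \cite{koe3} and Pop \cite{popmeta} to the intermediate setting of the maximal solvable quotient, with the canonical valuation recovery theorem announced in the abstract serving as the key new input. Throughout I would write $\hat{K} := F^{solv} \cap \overline{K}$ and verify as a first step that $\hat{K} = K^{solv}$ (using that $K$ is algebraically closed in $F$, and that under this assumption restriction gives $Gal(F L / F) \simeq Gal(L/K)$ for any Galois $L/K$ inside $\overline{F}$). This reduces the problem to studying sections of the short exact sequence
\[
1 \to Gal(F^{solv}/F \cdot K^{solv}) \to Gal(F^{solv}/F) \to G_K^{solv} \to 1.
\]

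For injectivity, given distinct $a, b \in X(K)$ with associated valuations $v_a \neq v_b$ on $F$ and prolongations $w_a, w_b$ to $F \cdot K^{solv}$, I would show that the decomposition groups $D_{w_a}, D_{w_b}$ inside $Gal(F^{solv}/F)$ are not conjugate: conjugation in $Gal(F^{solv}/F)$ permutes prolongations of a fixed valuation on $F$, so conjugacy of $D_{w_a}$ and $D_{w_b}$ would force $v_a$ and $v_b$ to be conjugate, hence equal. Since any section lying over $a$ takes values in some $D_{w_a}$, and $D_{w_a}$ is determined by $v_a$ up to $Gal(F^{solv}/F \cdot K^{solv})$-conjugation, the section-bouquets over distinct points are non-conjugate.

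For surjectivity, let $s : G_K^{solv} \to Gal(F^{solv}/F)$ be a section, set $H := s(G_K^{solv})$ and $L := (F^{solv})^H$. Then $F \subseteq L$, $L \cap K^{solv} = K$, and $Gal(F^{solv}/L) \simeq G_K^{solv}$, so $L$ is precisely the kind of field to which the canonical $\cat$-henselian machinery applies. I would invoke the recovery theorem to extract a henselian valuation $v_s$ on $L$ with residue behaviour governed by $G_K^{solv}$, and then restrict to $F$. The claim is that $v_s|_F$ is a $K$-valuation: its triviality on $K$ should follow from the naturality of the canonical construction with respect to the inclusion $K \hookrightarrow L$ (so that on $K$ it reduces to the trivial canonical henselian valuation), while its residue field being exactly $K$ should follow from matching the residue Galois group of $v_s$ (read off from $H$ modulo inertia) with the Galois group of the tautological residue at a hypothetical $K$-point. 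Once $v_s|_F$ is a $K$-valuation, completeness of $X$ gives a unique $a \in X(K)$ with $v_a = v_s|_F$, and a comparison of decomposition groups shows that $s$ lies over $a$.

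The step I expect to be the main obstacle is precisely the residue-field identification in the surjectivity argument: showing that the residue field of $v_s$ is $K$ rather than some proper algebraic extension. This is the point at which the ``solvable'' quotient must be shown to be rich enough to distinguish geometric residues from spurious ones, and where the indeterminacy flagged in the abstract (``up to some indeterminacy of the residue field'') must be ruled out by exploiting the extra rigidity supplied by the section itself, namely that the composition $G_K^{solv} \xrightarrow{s} Gal(F^{solv}/F) \to G_K^{solv}$ is the identity. I expect this step to combine the canonical-valuation machinery's identification of the residue characteristic and uniformizers from solvable Galois data with a Chebotarev-style argument forcing the residue Galois group to coincide with $G_K^{solv}$ on the nose.
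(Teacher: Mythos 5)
Your outline mistakes what the canonical-valuation recovery actually produces, and as a result the surjectivity argument has a genuine gap at precisely the point you flag as the main obstacle.

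The valuation extracted from the Galois data by the paper's recovery theorem (Theorem~\ref{keyprop}) on the fixed field $L$ of $s(G_K^{solv})$ is a \emph{full $p$-adic} valuation: it has value group $\mathbb{Z}$, finite residue field of characteristic $p$, and mixed characteristic $(0,p)$. In particular, its restriction to the base field $K$ (a finite extension of $\mathbb{Q}_p$) is the $p$-adic valuation, \emph{not} the trivial valuation. Your claim that ``triviality on $K$ should follow from the naturality of the canonical construction \ldots so that on $K$ it reduces to the trivial canonical henselian valuation'' is false: the canonical henselian valuation of a $p$-adic field is precisely the $p$-adic valuation, and this is exactly what makes the recovery theorem work. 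Consequently $v_s|_F$ is a mixed-characteristic valuation of $F$, not a $K$-valuation. Likewise, the ``indeterminacy of the residue field'' mentioned in the abstract is not about $K$ versus a proper algebraic extension of $K$; it refers to the possibility (when one only uses $(p,q)$-quotients) of recovering a different finite extension $F'$ of $\mathbb{Q}_p$ with isomorphic Galois quotient. No Chebotarev-style argument appears or is needed.

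The missing idea is the coarsening step. Having obtained the recovered valuation $v$ on $L$ with $\Gamma_v \simeq \mathbb{Z}$ and finite residue field, one restricts it to $F'(X)$ (with $F'$ the algebraic part carrying the $p$-adic valuation $v_p$), picks a uniformizer $\pi$ of $F'$ algebraic over $\mathbb{Q}$, lets $H$ be the subgroup of $\Gamma_w$ generated by $w(\pi)$, and forms the coarsening $w'$ with value group $\Gamma_w/H$. The crucial observation --- and the place where completeness of $F'$ enters --- is that $H \neq \Gamma_w$, because a complete field admits no immediate extensions of positive transcendence degree; hence $w'$ is non-trivial. By construction $w'(\pi)=0$ so $w'$ is trivial on $F'$ with residue field $F'$, i.e.\ it is an $F'$-valuation, and it inherits $p$-henselianity as a coarsening of a $p$-henselian valuation. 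Without this decomposition of the recovered valuation into ``$p$-adic part'' and ``geometric part'' your argument cannot produce a $K$-valuation. Your injectivity sketch is also loose (conjugate decomposition groups need not force equality of the underlying valuations without more care), but in the paper injectivity and surjectivity are handled simultaneously via the uniqueness assertion that each section lies over a unique $F$-valuation, rather than by a separate non-conjugacy argument.
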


\noindent This follows from Pop's Theorem in the case where $K$ contains a primitive $p$-th root of unity. Pop's proof uses local-global principles for Brauer groups and uses crucially the fact that one is working with function fields of curves. The main novelty of this note is the method of proof, which goes instead via the following new group theoretic characterization of the existence of certain valuations on a field, of interest in its own right: \\

\begin{theorem}

Let $K$ be any field, $p$ a prime.
Then there is a valuation $v$ on $K$, extending uniquely to $K^{solv}$, with $\Gamma_v \not= p \Gamma_v$ and char$(Kv) \not= p$ if and only if $Gal(K^{solv}/K)$ has a non-procyclic $p$-Sylow subgroup with a non-trivial abelian normal subgroup. \\

\end{theorem}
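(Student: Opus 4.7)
The plan is to reduce the statement to a classical question about absolute pro-$p$ Galois groups by passing to the fixed field $K_P := (K^{solv})^P$ of a chosen $p$-Sylow $P$ of $G := \mathrm{Gal}(K^{solv}/K)$. A first step is to verify that $K^{solv}$ is the maximal pro-$p$ extension of $K_P$, so that $P = G_{K_P}(p)$ is the maximal pro-$p$ Galois group of $K_P$; this places the hypothesis on $P$ inside the classical setting governed by rigid-element theory and $p$-henselian valuations.

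\textbf{Forward direction.} Given $v$ as in the statement, extend it uniquely to $w$ on $K^{solv}$. Uniqueness forces the inertia subgroup $I := I_w$ to be normal in $G$, and the condition $\mathrm{char}(Kv) \neq p$ makes the pro-$p$ part $A$ of $I$ abelian (it is the pro-$p$ component of the tame inertia). It is non-trivial because $\Gamma_v/p\Gamma_v \neq 0$ together with $\mu_{p^\infty} \subseteq K^{solv}$ exhibits $A$, via the Kummer description of tame inertia, as free of $\mathbb{Z}_p$-rank $r = \dim_{\mathbb{F}_p}(\Gamma_v/p\Gamma_v)$. Choosing $P \supseteq A$ gives $A = P \cap I$, whose normality in $P$ follows from that of $I$ in $G$ --- this supplies the required nontrivial abelian normal subgroup. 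For non-procyclicity of $P$, combine the rank $r$ from $A$ with the pro-$p$ quotient of $G/I$; the key point is that $(K^{solv})w = (Kv)^{solv}$, since unramified solvable extensions of $K$ correspond bijectively to solvable extensions of $Kv$, and this gives further independent pro-$p$ directions in $P$ from $\mathrm{Gal}((Kv)^{solv}/Kv)$.

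\textbf{Reverse direction.} Assume $P$ is non-procyclic with a nontrivial abelian normal subgroup. Apply the classical Engler--Koenigsmann--Ware-type characterization of pro-$p$ Galois groups containing a nontrivial abelian normal subgroup to conclude that $K_P$ admits a non-trivial $p$-henselian valuation $u$. The shape of the normal subgroup in $P$ --- in particular its rank combined with the assumption of non-procyclicity --- forces $\mathrm{char}(K_P u) \neq p$ and $\Gamma_u \neq p\Gamma_u$, ruling out the alternative ``$p$-adic-like'' conclusion in which $u$ has residue characteristic $p$.

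\textbf{Descent and main obstacle.} The last and most delicate step is to descend $u$ from $K_P$ to a valuation $v$ on $K$ that still extends uniquely to all of $K^{solv}$, not merely to the pro-$p$ extension $K_P(p) = K^{solv}$. This is precisely what the canonical $\cat$-henselian valuation machinery built in the earlier part of the paper is designed for: with $\cat$ taken to be the class of finite subextensions of $K^{solv}/K$, the canonical $\cat$-henselian valuation on $K$ is nontrivial exactly when such a valuation exists, and its non-triviality can be detected after passing up the pro-$p'$ tower $K \subseteq K_P$, along which the invariants $\Gamma/p\Gamma$ and residue characteristic are preserved. The hard part will be verifying that this canonical descent produces a $v$ which is not merely $p$-henselian on $K$ but has unique extension all the way to $K^{solv}$; this is the technical core of the argument and the one place where the full generality of the canonical formalism --- handling both pro-$p$ and pro-$p'$ Galois steps simultaneously --- is essential.
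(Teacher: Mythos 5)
Your overall architecture (pass to the Sylow fixed field $K_P$, invoke rigid-element theory for pro-$p$ Galois groups, descend to $K$ via the canonical $\mathcal{C}$-henselian machinery) matches the paper's skeleton, and your forward direction is essentially the paper's. But there is a genuine gap in the reverse direction, and it is precisely at the step you dispatch in one sentence: ``the shape of the normal subgroup in $P$ \ldots forces $\mathrm{char}(K_Pu)\neq p$ and $\Gamma_u\neq p\Gamma_u$, ruling out the alternative $p$-adic-like conclusion.'' An Engler--Koenigsmann--Ware-type theorem, applied to the abstract pro-$p$ group $P = G_{K_P}(p)$, produces \emph{some} non-trivial $p$-henselian valuation, but it does not by itself exclude residue characteristic $p$ or $p$-divisible value group. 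Nothing about ``non-procyclic with a non-trivial abelian normal subgroup'' distinguishes, at the level of abstract pro-$p$ groups, a valuation tamely branching at $p$ from a mixed-characteristic $(0,p)$ henselian valuation; you need to see data beyond the isomorphism type of $P$.

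The paper closes this gap by not working with all of $P$ at once. It locates inside $P$ a \emph{normal} subgroup isomorphic to exactly $\mathbb{Z}_p\rtimes\mathbb{Z}_p$ (rank $2$), passes to its fixed field $L$ so that $G_L(p)\simeq\mathbb{Z}_p\rtimes\mathbb{Z}_p$ precisely, and then invokes Proposition~\ref{sharpening}: if $\mathrm{Gal}(L''/L)\simeq\mathbb{Z}/p^2\mathbb{Z}\rtimes\mathbb{Z}/p^2\mathbb{Z}$, then $L$ admits a $p$-henselian valuation \emph{tamely branching} at $p$. The proof of that proposition is where the residue characteristic is actually controlled: the rank-$2$ shape forces (via Proposition~\ref{popbrauer} and Lemma~\ref{cdlemma}, i.e.\ the Brauer-group/norm-map argument readable from the meta-abelian quotient) that the norm maps $N_{L(\sqrt[p]{a})/L}$ are non-surjective, which in turn produces enough rigid elements for Lemma~\ref{wonderful} and Proposition~\ref{rigidclassic} to bite and simultaneously prevents the residue characteristic from being $p$ (since a $p$-henselian mixed-characteristic valuation with $p$-divisible value group gives $\mathrm{cd}=1$ by Proposition~\ref{tricky}, incompatible with the Demushkin-like rank-$2$ shape). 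Only \emph{after} this does the paper descend: first from $L$ down to $K_P$ via the normal Going-Down (Proposition~\ref{normaldown}), and then from $K_P$ down to $K$ via the Sylow Going-Down (Proposition~\ref{sylowdown}). So the descent is a two-stage one through an intermediate field $L$, not a single step from $K_P$ to $K$, and the tame-branching conclusion is extracted from the Brauer-group refinement on $L$, not from the abstract isomorphism type of the full Sylow $P$. To repair your argument you would need to either reproduce this reduction to a rank-$2$ normal subgroup plus the norm-nonsurjectivity argument, or supply an independent reason why a $p$-henselian valuation on $K_P$ with residue characteristic $p$ would contradict the hypotheses on $P$.

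Two smaller points: in your forward direction, the non-procyclicity of $P$ in the case $\mathrm{rank}(A)=1$ requires the ``not $p^2$-closed residue field'' clause of the tamely-branching definition to guarantee a contribution from $G_{Kv}(p)$; you should make that dependence explicit. And the identity $(K^{solv})w = (Kv)^{solv}$ you invoke is exactly property (L) for $\mathcal{C}_{solv}$, established in Proposition~\ref{solvclass}; it is fine to use but deserves a citation rather than being taken as evident.
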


\noindent To do this, we develop a general machinery of `canonical valuations' which allow one to deduce the section conjecture for any $\hat{F}$ satisfying certain technical properties. Roughly speaking, if the choice of $\hat{F}$ is such that one can develop a `good' notion of a $\hat{F}$-henselian valuation (i.e. a valuation on $F$ extending uniquely to $\hat{F}$), then we show that bijectivity of (2) is a purely formal consequence of the arguments from \cite{koe1} and \cite{ep}. Theorem 1.2 then follows from the fact that $F^{solv}$ satisfies the required properties.

In fact, pushing these arguments to their limit, we can even take $\hat{F}$ to be $F^{pq}$, the maximal $(p,q)$-meta-abelian extension of $F$, where $p$ and $q$ are two distinct primes and $F$ is a $p$-adic field containing a primitive $p$-th and $q$-th root of unity.\footnote{See Definition 8.6.}\footnote{Taking $\hat{F}=F^{pq}$ appears to be best possible using these methods, though see \cite{strommen_q2} for a strengthening when $F=\mathbb{Q}_2$.} The techniques here are based upon the fundamental characterization in \cite{koe2}. With $F$ as above, provided $F$ contains $\zeta_l$, where $l$ is a prime not equal to $p$, then one can show that
\begin{equation}
G_F(l) \simeq \Z_l \rtimes \Z_l. \nonumber
\end{equation}
where $G_F(l)$ denotes the maximal pro-$l$ quotient of $G_F$. In \cite{koe2} it is shown that any field $L$ with the same maximal pro-$l$ quotient must admit a $l$-henselian valuation so-called tamely branching at $l$. Since $l$ is arbitrary, it is clear that this criterion alone cannot recover a fully $p$-adic valuation. We will show that as soon as you add in some minimal knowledge of $p$-power extensions in the Galois group, you can recover it almost completely. In fact, we show that the maximal solvable quotient of the absolute Galois group recovers the valuation completely, while $Gal(F^{pq}/F)$, in the presence of roots of unity, recovers it up to some indeterminacy of the residue field. This gives a significant strengthening of the main result in \cite{koe1}.

Due to the strength of Theorem 1.2, we can also easily prove an analogue of the section conjecture for $p$-adic \emph{varieties} as well (Theorem \ref{bsc}) in this paper), stating that sections correspond to unique $K$-valuations.

\section{Preliminaries}

\subsection{Notation and Conventions}

Let $K$ be a valued field, with valuation $v$. Denote the valuation ring $\val_v$, the value group $\Gamma_v$ and the residue field $Kv$. If $a \in \val_v$, denote by $\bar{a}$ the image of $a$ in $Kv$. Given two valuation rings $\val_1$ and $\val_2$ on a field, $\val_2$ is said to be \emph{coarser} than $\val_1$ if $\val_1 \subset \val_2$. Two valuations are called \emph{comparable} if one is coarser than the other.
\\

\noindent Given a field $K$, let $G_K:=Gal(K^{sep}/K)$ denote the Galois group of a fixed separable closure of $K$. We have the following two important subfields of $K^{sep}$:
\begin{itemize}
\item $K(p)$, the maximal $p$-power extension of $K$, $p$ a prime. That is, the compositum of all extensions $L/K$ with $[L:K]=p^n$ for some $n$.
\item $K(p,q)$, the maximal $(p,q)$-extension of $K$, $p$ and $q$ distinct primes. That is, the compositum of all extensions $L/K$ with $[L:K]=p^nq^m$ for some $n,m$.
\item $K^{solv}$, the maximal solvable extension of $K$, i.e. the compositum of all extensions $L/K$ with $Gal(L/K)$ solvable.
\end{itemize}

\noindent The Galois groups $Gal(K(p)/K)$, $Gal(K(p,q)/K)$ and $Gal(K^{solv}/K)$ are naturally quotients of $G_K$. We denote them by $G_K(p)$ , $G_K(p,q)$ and $G_K^{solv}$ respectively.

\section{Some Galois Cohomology}

We recall some basics on Galois cohomology and the connection with Brauer groups and norms. The aim of this is to establish that the surjectivity of certain norm maps is a Galois theoretic property encoded by a very small quotient of $G_K$.

Let $p$ be a prime, $G$ a pro-$p$ group with rank $n$. Let $H^i(G):=H^i(G, \Z / p\Z)$, $i \in \mathbb{N}$ be the $i$-th Galois cohomology group.

If $K$ is a field and $L/K$ is a finite Galois extension, we let $N_{L/K}:L^{\times} \rightarrow K^{\times}$ denote the norm map. When $L=K(\sqrt[p]{a})$ for some $a \in K^{\times}$, we let $N(a)$ denote the image $N_{L/K}(L^{\times})$.

Now suppose $G=G_K(p)$, the maximal pro-$p$ quotient of $G_K$, is finitely generated, where $K$ is a field containing $\zeta_p$, a primitive $p$-th root of unity. Then Kummer Theory provides an isomorphism
\begin{equation}
H^1(G_K(p), \Z / p\Z) \simeq K^{\times}/(K^{\times})^p \nonumber
\end{equation}
and the theory of Brauer groups gives
\begin{equation}
H^2(G_K(p), \Z / p\Z) \simeq {_{p}Br(K)} \simeq (\Z/p\Z)^n \nonumber
\end{equation}
for some $n < \infty$, where ${_{p}Br(K)}$ is the $p$-torsion subgroup of the Brauer group of $K$. The cup-product pairing can be identified with the Hilbert symbol
\begin{equation}
K^{\times}/(K^{\times})^p \times K^{\times}/(K^{\times})^p \rightarrow (\Z / p\Z)^n \nonumber
\end{equation}
sending the pair $a,b$ to the symbol $(a,b)_K$ corresponding to the central simple $K$-algebra with generators $x,y$ subject to the relations $x^p=a, y^p=b, xy=\zeta_p yx$. We have $(a,b)_K=1$ iff $a \in N(b)$ iff $b \in N(a)$.

We will want to make use of a strengthening of the above observation. To this end we first make the following definition:

\begin{defn}\label{metadefn}
Given a field $K$ containing $\zeta_p$, let $K'$ denote the maximal $\Z/p\Z$ elementary abelian extension of $K$: thus $K'=K(\sqrt[p]{K^{\times}})$. Let $K''$ denote the maximal $\Z/p\Z$ elementary meta-abelian extension of $K$. That is, $K''=(K')'$.

If $G=G_K(p)$, we let $G':=Gal(K'/K)$, $G'':=Gal(K''/K)$.
\end{defn}

\begin{prop}\label{popbrauer}
Let $G=G_K(p)$ where $K$ is a field containing $\zeta_p$. Then
\begin{itemize}
\item[(i)] $H^1(G) \simeq H^1(G') \simeq K^{\times}/(K^{\times})^p$;
\item[(ii)] Given $a, b \in H^1(G)$, we have that $a \cup b = 0$ in $H^2(G)$ if and only if $a \cup b =0$ in $H^2(G'')$.
\end{itemize}
In particular, given $a,b$ in $K^{\times}/(K^{\times})^p$, whether or not $(a,b)_K$ is 1 or -1 can be read off $G''$. 
\end{prop}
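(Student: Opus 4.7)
The plan is to tackle the two parts in sequence. For (i), I will invoke Kummer theory: since $K \ni \zeta_p$, we have the canonical identification $H^1(G_K, \Z/p\Z) \simeq K^\times/(K^\times)^p$, and the same passes to the maximal pro-$p$ quotient $G = G_K(p)$. The isomorphism $H^1(G') \simeq H^1(G)$ is then automatic: $G'$ is by construction the Frattini quotient $G/[G,G]G^p$ of $G$, so every continuous homomorphism $G \to \Z/p\Z$ vanishes on $[G,G]G^p$, and inflation $H^1(G') \to H^1(G)$ is thereby an isomorphism.

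For (ii), the ``if'' direction is immediate from the functoriality of cup products: inflation $H^2(G'') \to H^2(G)$ sends $a \cup_{G''} b$ to $a \cup_G b$. The real content is the ``only if'' direction, i.e.\ showing $a \cup b = 0$ in $H^2(G)$ forces $a \cup b = 0$ in $H^2(G'')$. I plan to reinterpret the cup product as an embedding problem governed by the Heisenberg group. The pair $(a,b)$ defines a homomorphism $\phi: G \to (\Z/p\Z)^2$, and $a \cup b$ is the $\phi$-pullback of the class of the central extension
\[
1 \to \Z/p\Z \to H_p \to (\Z/p\Z)^2 \to 1,
\]
so $a \cup b = 0$ in $H^2(G)$ if and only if $\phi$ lifts to a continuous homomorphism $\tilde\phi: G \to H_p$. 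Assuming $a,b$ are $\Z/p\Z$-linearly independent in $H^1$ (the dependent case is settled by graded-commutativity for $p$ odd, with a separate Bockstein-type argument at $p=2$), a subgroup analysis of $H_p$ shows any such $\tilde\phi$ must itself be surjective, giving a Galois $H_p$-extension $L/K$ containing $K(\sqrt[p]{a}, \sqrt[p]{b})$.

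The key remaining step is to observe that every such $H_p$-extension is automatically contained in $K''$. Indeed, the fixed field of the center $Z(H_p) \subset Gal(L/K)$ is $K(\sqrt[p]{a}, \sqrt[p]{b}) \subset K'$, and $L$ is a $\Z/p\Z$-extension of this subfield. Since $\zeta_p \in K \subset K(\sqrt[p]{a}, \sqrt[p]{b})$, Kummer theory gives $L = K(\sqrt[p]{a}, \sqrt[p]{b})(\sqrt[p]{z})$ for some $z \in K(\sqrt[p]{a}, \sqrt[p]{b})^\times \subset (K')^\times$, whence $L \subset K'\bigl(\sqrt[p]{(K')^\times}\bigr) = K''$. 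Therefore $\tilde\phi$ factors through $G''$, producing a lift $G'' \to H_p$ of the $G''$-version of $\phi$, and hence $a \cup b = 0$ in $H^2(G'')$. The final sentence of the proposition follows by recalling the identification of $a \cup b \in H^2(G)$ with the class of the cyclic algebra $(a,b)_K$ in ${}_p Br(K)$.

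The hardest step will be pinning down the subgroup analysis of $H_p$ uniformly in $p$, in particular at $p=2$ where $H_2$ is dihedral or quaternionic of order $8$ and has exponent $4$, and making sure the Kummer-theoretic descent $L = K(\sqrt[p]{a},\sqrt[p]{b})(\sqrt[p]{z})$ goes through cleanly so that the realization of the Heisenberg embedding problem inside $K''$ is not merely possible but forced.
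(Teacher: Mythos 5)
The paper does not actually prove (ii): it dispatches (i) as ``just Kummer theory'' and for (ii) simply cites Lemma~1 of \cite{popmeta}. Your proposal fills that gap with a genuine, self-contained argument, and the mathematics is correct. The inflation argument for (i) and the ``if'' half of (ii) is exactly right. For the ``only if'' half, the reinterpretation of $a \cup b$ as the obstruction to the Heisenberg embedding problem is the standard (and, as far as I can tell, essentially Pop's) mechanism; the crucial and correct observation is that any solution $L/K$ of that embedding problem has $Gal(L/K(\sqrt[p]{a},\sqrt[p]{b}))$ cyclic of order dividing $p$, so Kummer theory over the intermediate field $K(\sqrt[p]{a},\sqrt[p]{b}) \subseteq K'$ forces $L \subseteq K'' $, whence $\tilde\phi$ factors through $G''$ and kills the obstruction there. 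One remark: you do not actually need $\tilde\phi$ to be surjective; the containment $L \subseteq K''$ only uses that $\tilde\phi(G)\cap Z(H_p)$ is cyclic of order at most $p$, so the surjectivity digression (while correct, since the Heisenberg extension is non-split) is not load-bearing. The only place the write-up is not fully closed off is the one you flag yourself: the linearly dependent case at $p=2$, where $a\cup a=\beta(a)$ and one must verify that a $\Z/4$-lift of $a$ cuts out an extension inside $K''$ — this does go through by the same Kummer-over-$K(\sqrt{a})\subseteq K'$ argument, but you should state it rather than only gesturing at ``a Bockstein-type argument''. Modulo that small elaboration, this is a complete proof of the result the paper quotes without proof.
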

\begin{proof}
Part (i) is just Kummer theory. For part (ii), see \cite{popmeta}, Lemma 1.
\end{proof}

We will also recall some basic facts about the cohomological dimension $cd(G)$ of a pro-$p$ group $G$.

\begin{prop}\label{cdlemma}
Let $G$ be a pro-$p$ group. Then
\begin{itemize}
\item[(i)] $cd(G) \leq n$ if and only if $H^{n+1}(G, \Z /p\Z)=0$;
\item[(ii)] $cd(G)=1$ if and only if $G$ is a free pro-$p$ group;
\item[(iii)] If $G=G_K(p)$, where $K$ is a field containing $\zeta_p$, then $cd(G)=1$ if and only if for every $a \in K^{\times} \setminus (K^{\times})^p$, the norm map
\begin{equation}
N_{L/F}:L^{\times} \rightarrow K^{\times} \nonumber
\end{equation} 
is surjective, where $L=K(\sqrt[p]{a})$. Equivalently, ${_{p}Br(K)}=0$.
\end{itemize}
\end{prop}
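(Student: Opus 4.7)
For part (i), I would use the standard filtration argument. By definition $cd(G) \le n$ requires $H^k(G, M) = 0$ for all $k > n$ and all discrete $p$-primary torsion $G$-modules $M$; the plan is to reduce this to the case $M = \Z/p\Z$. Since continuous cohomology commutes with filtered colimits, I first reduce to finite $M$, and then use that any finite $p$-primary discrete $G$-module (with $G$ pro-$p$) admits a composition series whose factors are $\Z/p\Z$ with trivial action (the only simple such module). Long exact sequences in cohomology then complete the induction, reducing vanishing of $H^{n+1}(G, M)$ to vanishing of $H^{n+1}(G, \Z/p\Z)$.

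Part (ii) is Serre's theorem. The easy direction uses an explicit resolution of $\Z/p\Z$ over the completed group algebra of a free pro-$p$ group. For the converse, assume $H^2(G, \Z/p\Z) = 0$. I would lift an $\mathbb{F}_p$-basis of $H^1(G, \Z/p\Z)$ to a minimal generating set of $G$, giving a surjection $\phi: F \twoheadrightarrow G$ from a free pro-$p$ group $F$ on those generators; let $N = \ker\phi$. Minimality of $\phi$ forces $N \subseteq [F,F]F^p$, and the five-term exact sequence of $1 \to N \to F \to G \to 1$ identifies $N/\overline{[N,F]N^p}$ with a subspace of $H^2(G, \Z/p\Z) = 0$. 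Iterating this observation down the Zassenhaus $p$-filtration of $F$ would force $N = 1$, so $G \simeq F$ is free.

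For part (iii), the plan is to first connect the cohomology of $G_K(p)$ to ${_{p}Br(K)}$, and then convert the Brauer-theoretic statement into one about norms. Since $\zeta_p \in K$, Kummer theory identifies $\mu_p \simeq \Z/p\Z$ as Galois modules, and the Kummer sequence combined with Hilbert 90 yields $H^2(G_K, \Z/p\Z) \simeq {_{p}Br(K)}$. To pass from $G_K$ to $G_K(p)$, I would use that $K(p)$ is by construction $p$-closed, so the kernel $H := \mathrm{Gal}(K^{sep}/K(p))$ admits no continuous map to $\Z/p\Z$, whence $H^1(H, \Z/p\Z) = 0$. Hochschild--Serre then gives $H^2(G_K(p), \Z/p\Z) \simeq H^2(G_K, \Z/p\Z) \simeq {_{p}Br(K)}$, so by (i), $cd(G_K(p)) \le 1$ iff ${_{p}Br(K)} = 0$.

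Finally, for the translation into norms: for a cyclic extension $L = K(\sqrt[p]{a})/K$ of degree $p$, periodicity of Tate cohomology together with Hilbert 90 gives $K^\times / N_{L/K}(L^\times) \simeq Br(L/K) \hookrightarrow {_{p}Br(K)}$. So ${_{p}Br(K)} = 0$ immediately implies all such norms are surjective. The converse is the principal external input and the main obstacle: surjectivity of $N_{L/K}$ for all $a$ is equivalent to vanishing of every Hilbert symbol $(a,b)_K$, and by the Merkurjev--Suslin theorem in weight two these symbols generate ${_{p}Br(K)}$ once $\zeta_p \in K$. Everything else is bookkeeping with standard spectral sequences.
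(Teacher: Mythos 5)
Your proof is correct and follows essentially the same route as the paper, which cites Serre for (i)--(ii) and, for (iii), invokes the isomorphism $H^2(G_K(p),\Z/p\Z)\simeq {}_pBr(K)$ together with the Merkurjev--Suslin theorem; you are filling in the standard details. One small imprecision: Hochschild--Serre with $H^1(G_{K(p)},\Z/p\Z)=0$ gives only \emph{injectivity} of the inflation $H^2(G_K(p),\Z/p\Z)\to H^2(G_K,\Z/p\Z)$; surjectivity needs the further input (again ultimately Merkurjev--Suslin, via $H^1(G_K,\Z/p\Z)=H^1(G_K(p),\Z/p\Z)$ and the fact that $H^2(G_K,\Z/p\Z)$ is generated by cup products of degree-one classes) that every class lies in the image of inflation, so Merkurjev--Suslin is doing work earlier in your argument than your last paragraph suggests.
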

\begin{proof}
The first two items are standard (see \cite{serre2}). The last claim follows from the isomorphism $H^2(G_K(p), \Z / p\Z) \simeq {_{p}Br(K)}$ and the fact that ${_{p}Br(K)}$ is generated by the symbols $(a,b)_K$ (the Merkurjev-Suslin Theorem) which are trivial exactly when $b \in N(a)$.
\end{proof}

In fact, by Proposition \ref{popbrauer}, the conclusion of (iii) above holds even when $G$ is taken to be $G''$.

\subsection{Notions of Henselianity}

\begin{defn}
Let $H$ be a Galois extension of $K$, not necessarily finite. Then $(K,\val)$ is called $H$-henselian if $\val$ extends uniquely to $H$. Equivalently, if $\val$ extends uniquely to every finite sub-extension $K \subset L \subset H$.
\end{defn}

\begin{lemma}\label{henselslemma}(Hensels Lemma)
The following are equivalent:
\begin{itemize}
\item[(i)] $v$ is $H$-henselian;
\item[(ii)] Let $f \in \val_v[x]$ be a polynomial which splits in $H$. Then for every $a \in \val_v$ with $\bar{f}(\bar{a})=0$ and $\bar{f}'(\bar{a})\not=0$, there exists $\alpha \in \val$ with $f(\alpha)=0$ and $\bar{\alpha} = \bar{a}$.
\item[(iii)] Suppose the polynomial $x^n+x^{n-1}+a_{n-2}x^{n-2}+\ldots+a_0 \in \val_v[x]$, with $a_{n-2}, \ldots, a_0 \in \M_v$, splits in $H$. Then it has a zero in $K$.
\end{itemize}
\end{lemma}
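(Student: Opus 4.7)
My plan is the cyclic chain (i) $\Rightarrow$ (ii) $\Rightarrow$ (iii) $\Rightarrow$ (i).

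For (i) $\Rightarrow$ (ii), I let $w$ be the unique extension of $v$ to $H$ and factor $f = \prod_i(x - \alpha_i)$ over $H$, with $\alpha_i \in \val_w$ by integrality. Then $\bar f = \prod_i(x - \bar\alpha_i)$ in $Hw[x]$. Because $\bar a$ is a simple root of $\bar f$, there is a unique $\alpha_1$ with $\bar\alpha_1 = \bar a$ (two such roots would contribute $(x-\bar a)^2$ to $\bar f$). Now for any $\sigma \in \mathrm{Gal}(H/K)$, uniqueness of extension forces $w \circ \sigma = w$, so $\sigma$ preserves $\val_w$ and $\M_w$ and induces a $Kv$-automorphism $\bar\sigma$ of $Hw$. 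Hence $\overline{\sigma(\alpha_1)} = \bar\sigma(\bar a) = \bar a$; combined with the fact that $\sigma(\alpha_1)$ is again a root of $f$, the uniqueness of the simple lift forces $\sigma(\alpha_1) = \alpha_1$ for every $\sigma$, so $\alpha_1 \in K$.

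The implication (ii) $\Rightarrow$ (iii) is immediate: the polynomial in (iii) has reduction $\bar f(x) = x^{n-1}(x+1)$, which has $\bar a = -1$ as a simple root since $\bar f'(-1) = (-1)^{n-1} \neq 0$. Apply (ii).

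For (iii) $\Rightarrow$ (i), I would argue contrapositively. Suppose $v$ has distinct extensions $w_1 \neq w_2$ to some finite Galois subextension $L \subset H$. The Approximation Theorem for independent valuations lets me pick $\beta \in L$ with any prescribed residues at the extensions $w_1,\dots,w_k$ of $v$ to $L$; I arrange $\beta$ to be primitive for $L/K$ and to have residues selected so that the minimal polynomial $m \in K[x]$ of $\beta$ has $x^{n-1}$-coefficient a unit in $\val_v$ and all lower coefficients in $\M_v$. A rescaling $x \mapsto cx$ with $c$ equal to this unit coefficient (and division by $c^n$) turns $m$ into a polynomial of exactly the shape required in (iii), splitting in $L \subset H$ but with no root in $K$ since its roots are $\mathrm{Gal}(L/K)$-conjugates of $\beta \notin K$. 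The main technical obstacle is arranging the $x^{n-1}$-coefficient to be a unit when $\mathrm{char}(Kv)$ divides the common ramification index $e$: the naive choice $\bar\beta_{w_1} = -1$, $\bar\beta_{w_j} = 0$ for $j > 1$ gives a trace proportional to $e$, which vanishes modulo $\M_v$. Resolving this requires prescribing residues in $Lw_j \setminus Kv$ and invoking surjectivity of the residue-field trace maps. This is the classical proof of Hensel's Lemma as presented in Engler--Prestel \cite{ep} \S 4.1; no modification is needed to accommodate the restriction ``splits in $H$'' since the polynomial we construct splits in $L \subset H$ by design.
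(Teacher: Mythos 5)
The paper itself supplies no proof of this lemma --- it is stated as a known fact, implicitly a routine generalization of Engler--Prestel Theorem 4.1.3, where $K^{sep}$ is replaced throughout by the Galois extension $H$. So there is no proof in the paper to compare against; the question is whether your proposal is sound, and whether the generalization to $H$-henselianity introduces any new difficulty.

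Your cyclic chain is the standard route and the key observations are right. In (i) $\Rightarrow$ (ii) the decisive point is that uniqueness of the extension $w$ forces $Gal(H/K)$-invariance of $w$, hence of $\val_w$, so the Galois action descends to the residue field and the unique simple lift $\alpha_1$ of $\bar a$ is Galois-fixed. This is exactly the classical argument and you have presented it correctly. The implication (ii) $\Rightarrow$ (iii) is the usual calculation and is fine. For (iii) $\Rightarrow$ (i), your contrapositive plan, the construction of a primitive $\beta$ with prescribed residues, and the flag about the trace vanishing when $\mathrm{char}(Kv)$ divides the ramification index are all the right concerns; your final remark that the constructed minimal polynomial splits already in a finite Galois $L \subseteq H$ is precisely the observation needed to see that the passage from $K^{sep}$ to $H$ costs nothing. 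Two small imprecisions to tidy: (1) in (i) $\Rightarrow$ (ii) you factor $f = \prod_i(x-\alpha_i)$ with $\alpha_i \in \val_w$, which presupposes that $f$ is monic; for general $f \in \val_v[x]$ one should first reduce to this case (either by dividing by the leading coefficient when it is a unit, or via the Newton-polygon factorization into the ``integral'' part, which is the standard preliminary step in \cite{ep}); (2) the extensions $w_1, \dots, w_k$ of $v$ to $L$ are pairwise \emph{incomparable} but not in general pairwise \emph{independent} when $v$ has rank greater than one, so the approximation result you need is the one for prolongations (\cite{ep} Theorem 3.2.7), not the independent-valuations version. Neither is a conceptual gap, and both are handled as you say in \S 4.1 of \cite{ep}.
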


\begin{rmk}
Note that given any valued field $(K,v)$, we can always find an $H$-henselization of it, that is, an extension $(K^h,v^h)$ of valued fields such that $v^h$ is $H$-henselian. 
\end{rmk}

\noindent The following choices of $H$ will be of crucial importance in the rest of this paper:
\begin{itemize}
\item $H=K^{sep}$. In this case we call an $H$-henselian valuation simply \emph{henselian}.
\item $H=K(p)$, the maximal $p$-power extension of $K$ for some prime $p$ (that is, the compositum of all Galois extensions of $K$ of degree $p^n$ for some $n$). In this case we call a $H$-henselian valuation \emph{$p$-henselian}.
\item $H=K$: the compositum of all Galois extensions of $K$ of degree $p^nq^m$. We call this the maximal $(p,q)$-extension of $K$. In this case an $H$-henselian valuation is called $(p,q)$-henselian.
\item $H=K^{solv}$: the maximal pro-solvable extension of $K$. In this case we call a $H$-henselian valuation \emph{solv-henselian}.
\end{itemize}

In the case of $p$-henselianity we have the following useful observation (see \cite{ep}, Theorem 4.2.2).

\begin{lemma}\label{phensel}
A valuation $v$ on a field $K$ is $p$-henselian if and only if it extends uniquely to every Galois extension of $K$ of degree $p$.
\end{lemma}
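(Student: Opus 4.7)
The forward direction is immediate from the definition: if $v$ extends uniquely to $K(p)$, it extends uniquely to every finite sub-extension of $K(p)/K$, and in particular to every degree-$p$ Galois extension.

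For the converse, I would proceed by contradiction. By the definition of $p$-henselianity (and the fact that $K(p)$ is the direct limit of its finite Galois sub-extensions), failure of $p$-henselianity means there exists a finite Galois extension $L/K$ of $p$-power degree admitting two distinct extensions $w \neq w'$ of $v$. Set $G := Gal(L/K)$ and let $D \subseteq G$ be the decomposition group of $w$. Since $G$ acts transitively on the set of extensions of $v$ to $L$ with stabilizer $D$, the inequality $w \neq w'$ forces $D \subsetneq G$.

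The key step is a purely group-theoretic fact about finite $p$-groups: every proper subgroup is contained in a maximal subgroup, and every maximal subgroup of a finite $p$-group is normal of index $p$ (a standard consequence of nilpotency/the Frattini subgroup). Applying this to $D$ produces $N \triangleleft G$ with $D \subseteq N$ and $[G:N] = p$. Set $M := L^N$, so $M/K$ is Galois of degree $p$. By standard Hilbert decomposition theory, the decomposition group of the restriction $w|_M$ inside $Gal(M/K) = G/N$ is the image $DN/N$; since $D \subseteq N$, this image is trivial. Consequently $v$ admits $[G:N] = p$ distinct extensions to $M$, contradicting the hypothesis that $v$ extends uniquely to every Galois extension of degree $p$.

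The bulk of the argument is essentially formal; the one non-trivial input is the $p$-group statement above, which is textbook but crucial for reducing an arbitrary finite Galois $p$-extension to a sub-extension of degree $p$. The compatibility of decomposition groups under quotients is standard and needs no real work. No extra hypothesis on $K$ (e.g.\ containing $\zeta_p$) is needed, so the lemma holds in full generality as stated.
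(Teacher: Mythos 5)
Your proof is correct. Note first that the paper does not actually supply a proof of this lemma: it is stated with a bare citation to Engler--Prestel, Theorem 4.2.2, so there is no paper argument to compare against. Your argument is the standard one underlying that reference. The forward direction is definitional. For the converse, the two pillars you identify are exactly right: (i) non-uniqueness of the extension to some finite Galois sub-extension $L$ of $K(p)/K$ gives a proper decomposition subgroup $D \subsetneq G := Gal(L/K)$ in a finite $p$-group; (ii) since finite $p$-groups are nilpotent, every maximal subgroup is normal of index $p$, so $D$ sits inside some $N \triangleleft G$ with $[G:N]=p$. The compatibility of decomposition groups with quotients ($D_{w|_M/v}=DN/N$ in $G/N$) then gives $p$ distinct extensions to the degree-$p$ Galois extension $M=L^N$, contradicting the hypothesis. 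Each of these steps is sound. One small point of hygiene: the paper's phrasing of ``$K(p)$'' as the compositum of \emph{all} $p$-power degree extensions is imprecise (not every degree-$p^n$ extension has $p$-group Galois closure); your tacit reading of $K(p)$ as the maximal Galois pro-$p$ extension, i.e.\ the direct limit of finite Galois sub-extensions with $p$-group Galois group, is the intended and correct one, and is what makes the reduction to a finite Galois $p$-group extension legitimate.
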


\section{Canonical classes}

\begin{defn}
Let $\cat$ be a class of finite groups closed under extensions, subgroups and quotients. If $G$ is a profinite group, we let $G^c$ denote the maximal pro-$\cat$ quotient of $G$. If $G=G_K$, we define $K^c$ to be the unique subextension of $K^{sep}$ with $G_K^c =Gal(K^c/K)$. For any field $K$, we let $\cat(K)$ denote the set of Galois subextensions of $K^c/K$.
\end{defn}

By Galois theory, the following properties are immediate:

\begin{itemize}
\item[(i)] If $L, F \in \cat(K)$ then the compositum $LF \in \cat(K)$;
\item[(ii)] If $L \in \cat(K)$ and $F/K$ is a subfield of $L$, then $F \in \cat(K)$;
\item[(iii)] If $L \in \cat(K)$ and $M \in \cat(L)$ then $M \in \cat(K)$;
\item[(iv)] $(K^c)^c=K^c$.
\end{itemize}

From now on $\cat$ will always refer to such a class.

\begin{defn}A valuation on $K$ which is $K^c$-henselian with respect to a class $\cat$ is called $\cat$-henselian or simply \emph{$c$-henselian}. We also say that $K$ is $c$-closed if $K=K^c$.
\end{defn}

We have the following proto-typical examples:

\begin{itemize}
\item $\cat=\cat_{sep}$, the class of all finite groups. Then $K^c=K^{sep}$ and $c$-henselianity is the same as henselianity.
\item $\cat=\cat_{p}$, the class of all $p$-groups. Then $K^c=K(p)$ and $c$-henselianity is the same as $p$-henselianity.
\item $\cat=\cat_{solv}$, the class of all solvable finite groups. Then we write $K^c=K^{solv}$, and call a $c$-henselian valuation \emph{solv-henselian}.
\end{itemize}

\begin{defn}
Let $\cat_1$ and $\cat_2$ be two classes. We say that $\cat_1$ \emph{contains} $\cat_2$ if, for any profinite group $G$, $G^{c_2}$ is obtained from $G^{c_1}$ as the quotient by a characteristic subgroup. Note that in this case, the class of finite groups in $\cat_1$ actually contains the finite groups in $\cat_2$.
\end{defn}
On the field-theory side, if $\cat_1$ contains $\cat_2$, then for any field $K$, if $L \in \cat_2(K)$, also $L \in \cat_1(K)$.\\

\begin{ex}
We have that $\cat_{solv}$ contains $\cat_p$ for any $p$. Indeed, let $G^p$ denote the maximal pro-$p$ quotient, and $G^s$ the maximal solvable quotient. Then $G^p$ is the quotient of $G^s$ by the normal subgroup generated by all its Sylow $q$ subgroups, $q \not=p$, which is characteristic (since any automorphism sends a Sylow $q$-subgroup to another Sylow $q$-subgroup).\\
\end{ex}

Since isomorphisms descend to quotients by characteristic subgroups, we get that if $\cat_1$ contains $\cat_2$, then
\begin{equation}
G_F^{c_1} \simeq G_K^{c_1} \Rightarrow G_F^{c_2} \simeq G_K^{c_2} \nonumber
\end{equation}
for any two fields $F$ and $K$.

\begin{defn}
Call a class $\cat$ \emph{canonical} if the following conditions hold for any field $K$.
\begin{itemize}
\item[(L)] Let $v$ be a $c$-henselian valuation on $K$, and assume $K^cv/Kv$ is separable. Then $K^cv \in \cat(Kv)$, and for any $L \in \cat(Kv)$, there exists a (not necessarily unique) $L' \in \cat(K)$ such that $L'w=L$, where $w$ is the unique extension of $v$. In particular, if $K^c=K$ then $(Kv)^c=(K^c)v=Kv$.
\item[(S)] If $\val_1$ and $\val_2$ are two independent $c$-henselian valuations on a field $K$, then $K=K^c$.
\item[(R)] If $K^c$ is a finite extension of $K$, then $[K^c:K] \leq 2$.\\
\end{itemize}
\end{defn}

\noindent {\bf Note:} From now on, when we refer to a class $\cat$ it will always refer to a canonical class, and a $c$-henselian valuation will always be with respect to some canonical class $\cat$. \\

\noindent As indicated (see \cite{ep} p.103 and \cite{koe2}), we have the following known result:\\

\begin{fact}
\emph{The classes $\cat_{sep}$ and $\cat_{p}$ are canonical.}\\
\end{fact}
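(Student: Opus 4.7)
My plan is to verify the axioms (L), (S), and (R) for each of $\cat_{sep}$ and $\cat_p$ in turn, leaning on standard results from valuation theory.

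First I would handle (L) uniformly for both classes. The core tool is Hensel's Lemma (Lemma~\ref{henselslemma}): given a $c$-henselian valuation $(K,v)$ with separable residue extension, any finite Galois subextension $L/Kv$ of $(Kv)^c$ lifts to a finite Galois extension $L'/K$ of the same degree. Concretely, I pick a primitive element $\bar\alpha \in L$ whose minimal polynomial $\bar f$ over $Kv$ is separable, lift $\bar f$ to a monic $f \in \val_v[x]$ of the same degree, and let $L'$ be the splitting field of $f$ over $K$. Hensel's Lemma gives $[L':K] = [L:Kv]$ with residue field exactly $L$. For $\cat_{sep}$ any such $L'$ lies in $\cat_{sep}(K)$; for $\cat_p$, since $[L:Kv]$ is a $p$-power, so is $[L':K]$, hence $L' \in \cat_p(K)$. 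Passing to the limit over finite Galois subextensions of an arbitrary $L \in \cat(Kv)$ produces the required $L' \in \cat(K)$. The first clause of (L) --- that $K^cv \in \cat(Kv)$ --- follows symmetrically: the residue of any finite Galois sub-$\cat$-extension of $K^c$ is Galois over $Kv$ with Galois group a quotient of the original, hence still in $\cat$ by closure under quotients.

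Next I would verify (S). For $\cat_{sep}$ this is F.~K.~Schmidt's theorem: two independent henselian valuations on a field force the field to be separably closed. For $\cat_p$, the analogous statement --- that two independent $p$-henselian valuations force $K = K(p)$ --- appears in Engler-Prestel. Both are proved via the weak approximation theorem for independent valuations, used to construct, for any purported nontrivial finite (resp.\ $p$-power) Galois extension, an element whose minimal polynomial has contradictory factorization behaviour under the two henselizations, forcing the extension to be trivial.

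Finally, for (R), the case $\cat_{sep}$ is the classical Artin-Schreier theorem: if $[K^{sep}:K]$ is finite and positive, it equals $2$. For $\cat_p$ the analogous statement --- the Artin-Schreier theorem for pro-$p$ groups, due to Becker --- asserts that if $G_K(p)$ is finite and nontrivial, then $p = 2$ and $|G_K(p)| = 2$; in both cases $[K^c:K] \leq 2$. This pro-$p$ Artin-Schreier result is the main genuine obstacle in the argument: unlike the Hensel-lifting and approximation ingredients elsewhere, it requires substantive input about torsion in pro-$p$ absolute Galois groups. However, as a standard result it can be cited directly, and the overall proof amounts to assembling these ingredients in line with the conditions (L), (S), and (R).
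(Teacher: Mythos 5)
The paper does not actually prove this Fact: it simply cites it as known, pointing to Engler--Prestel (p.~103) and Koenigsmann. Your proposal therefore supplies an argument where the paper gives none, and the argument you outline is correct and is essentially the one implicit in those references. A few small remarks on completeness. For (L), your Hensel-lifting construction is the right one; note that the first clause ($K^cv \in \cat(Kv)$) requires observing both that each finite Galois sub-$\cat$-extension of $K^c/K$ has residue extension with Galois group a quotient of the decomposition group $= Gal(M/K)$ (you say this), \emph{and} that the compositum of these residue fields is then a Galois $\cat$-subextension of $(Kv)^c/Kv$, which is needed to land in $\cat(Kv)$ as defined in the paper. For (S), citing F.~K.~Schmidt in the separable case and the Engler--Prestel $p$-henselian analogue is exactly right; the approximation-theorem mechanism you describe is how both are proved. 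For (R), the Artin--Schreier theorem in the separable case and its pro-$p$ analogue (often attributed to Whaples and to E.~Becker; it appears in Engler--Prestel as well) are precisely what is needed, and you are right that this is the one place where a nontrivial structural theorem about absolute Galois groups is genuinely required rather than just Hensel's Lemma and approximation. In short: correct, and it fills in a proof the paper treats as a black box.
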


\begin{rmk}
To explain condition (R), recall the following classical results: $K$ is real-closed (resp. Euclidean) if and only if $\overline{K}$ (resp. $K(2)$) is a finite, non-trivial extension of $K$, and in this case, the extension is of degree 2. Therefore this condition will allow us to keep close control over the behaviour of $K$ in the unusual cases where $K^c$ is a finite extension of $K$. \\
\end{rmk}

\noindent The following simple observation is crucial:

\begin{prop} \label{solvclass}
Let $\cat_{solv}$ be the class of solvable finite groups. Then $\cat_{solv}$ is a canonical class.
\end{prop}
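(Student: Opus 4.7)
To prove $\cat_{solv}$ is canonical, the plan is to verify each of the three axioms (L), (S), and (R) separately, exploiting the fact that every finite quotient of $Gal(K^{solv}/K)$ is solvable.

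For (L), let $v$ be solv-henselian on $K$ with unique extension $w$ to $K^{solv}$, so the decomposition group of $w$ is all of $Gal(K^{solv}/K)$. Thus $K^{solv}v/Kv$ is Galois with group a quotient of $Gal(K^{solv}/K)$, which is pro-solvable; this gives $K^{solv}v \in \cat_{solv}(Kv)$. For the lifting direction, given a finite solvable Galois extension $L/Kv$ with group $H$, I would induct on $|H|$: pick a normal $H_0 \lhd H$ of prime index $p$, so that the layer $L^{H_0}/Kv$ is cyclic of degree $p$. This layer lifts to a cyclic degree-$p$ extension $M/K$ via Hensel's lemma (Kummer, Artin--Schreier, or unramified as the characteristics dictate; any needed $\zeta_p \in Kv$ lifts to $K$ since $\Phi_p$ splits in $K^{solv}$). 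Noting $M^{solv} = K^{solv}$, the extension $w|_M$ remains solv-henselian, so iteration yields the required lift of $L$.

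For (S), I would argue by contradiction: if $K \neq K^{solv}$ under independent solv-henselian $v_1, v_2$, there is a cyclic prime-degree extension $L \subset K^{solv}$ over $K$; let $\alpha_1, \ldots, \alpha_p$ be the conjugates of a primitive element of $L/K$. By the approximation theorem for independent valuations, I can choose $a \in K$ with $v_1(a - \alpha_1) > v_1(\alpha_1 - \alpha_j)$ for $j \neq 1$, and analogously for $v_2$ with $\alpha_2$. Krasner's lemma (valid because each $v_i$ extends uniquely to $K^{solv} \supset L$) then forces $\alpha_1, \alpha_2 \in K(a) = K$, contradicting $[L:K] > 1$.

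For (R), suppose $G := Gal(K^{solv}/K)$ is finite. For each odd prime $p \mid |G|$, the group $Gal(K(p)/K)$ is the maximal pro-$p$ quotient of $G$, hence a finite non-trivial $p$-group, contradicting the pro-$p$ Artin--Schreier theorem (Becker). So $G$ is a $2$-group; since all its finite quotients are $2$-extensions, $K^{solv} = K(2)$, and a final application of pro-$2$ Artin--Schreier yields $[K(2):K] \leq 2$.

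I expect the principal obstacle to be (L): the residue direction is essentially formal, but the lifting step requires the inductive tower construction, verification at each stage that the solv-henselian property is inherited by the intermediate field, and confirmation that every Hensel-lift lands in $\cat_{solv}$ of the appropriate base. The arguments for (S) and (R) are more direct applications of standard tools (approximation plus Krasner's lemma, and the classical and pro-$p$ Artin--Schreier theorems respectively).
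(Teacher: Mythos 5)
Your overall plan --- verify (L), (S), (R) separately --- is the same as the paper's, and your treatment of (L) (induct up the solvable tower one prime-degree layer at a time, noting $M^{solv}=K^{solv}$ at each stage) matches the paper's argument closely. The crucial simplification you miss, however, is the paper's use of Lemma~\ref{phensel}: a solv-henselian valuation is automatically $p$-henselian for \emph{every} prime $p$, so (L), (S), and (R) for $\cat_{solv}$ all reduce formally to the already-established canonicity of $\cat_p$. You instead attempt to re-prove (S) and (R) from first principles, and both attempts have gaps.

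For (S), the approximation step is not justified. Weak approximation for independent valuations on $K$ only produces an element of $K$ close to prescribed elements \emph{of $K$}; it does not let you choose $a\in K$ with $v_1(a-\alpha_1)$ arbitrarily large when $\alpha_1\in L\setminus K$ (think of trying to approximate $\sqrt{p}$ by elements of $\mathbb{Q}_p$ in $v_p$-value beyond $1/2$). You could approximate inside $L$ instead, or approximate coefficients of a polynomial rather than a root --- the latter is the actual content of F.K.~Schmidt's theorem (\cite{ep}, Theorem~4.4.1) --- but as written the Krasner step is not available. The paper avoids the entire issue: both valuations are $p$-henselian for each $p$, so (S) for $\cat_p$ gives $K=K(p)$ for every $p$, and a field with no prime-degree Galois extensions has $K=K^{solv}$.

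For (R), the claim that $p\mid |G|$ forces $Gal(K(p)/K)$ to be a nontrivial $p$-group is false for general solvable $G$: the maximal pro-$p$ quotient of $G=Gal(K^{solv}/K)$ can vanish even though $p\mid|G|$ (e.g.\ $S_3$ has trivial maximal $3$-quotient). The correct route, which the paper takes, is again to pass through $\cat_p$: finiteness of $K^{solv}/K$ forces $[K(p):K]<\infty$ for all $p$, whence (R) for $\cat_p$ gives $K(p)=K$ for $p$ odd and $[K(2):K]\le 2$; iterating this on the layers of the solvable tower (and using $(K^c)^c=K^c$) yields $K^{solv}=K(2)$. Your Artin--Schreier idea is the right ingredient --- indeed it is what underlies (R) for $\cat_p$ --- but the deduction from $p\mid|G|$ needs to be replaced by this tower argument.
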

\begin{proof}
Since $\cat_{solv}$ is closed under extensions, subgroups and quotients, it is a class in the sense of this paper. It remains to show that this class is canonical.

Suppose $v$ is a solv-henselian valuation on a field $K$. We need to show that we can lift solvable Galois extensions of $Kv$ to solvable Galois extensions of $K$. By Galois theory, the solvable Galois extensions are exactly the radical ones. Since $v$ is solv-henselian, and every Galois extension of degree $p$ is solvable, $v$ is $p$-henselian for every prime $p$, by Lemma \ref{phensel}. Since $\cat_p$ is canonical, any Galois extension of degree $p$ of $Kv$ can be lifted to $K$. Because any radical Galois extension can be written as a succession of extensions of prime degree, we can thus lift any radical Galois extension of $Kv$ to $K$. Note that for a field of characteristic $p$, a radical extension of degree $p$ is to be interpreted as an Artin-Schreier extension of degree $p$, i.e., an extension obtained by adjoining the roots of a polynomial of the form $x^p-x-a$. In the case when the valued field $(K,v)$ is of mixed characteristic $(0,p)$, assuming $\zeta_p \in K$, then such extensions of the residue field become `actual' radical extensions of $K$, namely the extension of degree $p$ obtained by adjoining a $p$-th root of $1+(\zeta_p-1)^pa$. Hence $\cat_{solv}$ satisfies property (L).

Next, suppose $v_1$ and $v_2$ are two independent solv-henselian valuations on a field $K$. As remarked in the above argument, $v_1$ and $v_2$ are in particular $p$-henselian for every prime $p$. Since $\cat_p$ is a canonical class, it follows that $K$ does not admit any non-trivial extensions of degree $p$. Hence it does not admit any radical extensions, whence $K=K^c$, implying that $\cat_{solv}$ satisfies property (S).

Finally, if $[K^c:K] < \infty$, then $[K(p):K]<\infty$ for every prime $p$. Again using that $\cat_p$ is canonical, we find that $K^c=K(2)$ and $[K(2):K] \leq 2$. Hence $\cat_{solv}$ satisfies (R).
\end{proof}

In an entirely analogous fashion we can prove the following:

\begin{prop}\label{pqclass}
Given two primes $p,q$, let $\cat_{p,q}$ be the class of finite $(p,q)$-groups, i.e., groups of cardinality $p^nq^m$ for some $n,m$. Then $\cat_{p,q}$ is a canonical class.
\end{prop}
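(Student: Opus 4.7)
The plan is to mimic the proof of Proposition \ref{solvclass} essentially verbatim, substituting Burnside's $p^aq^b$-theorem for the classical fact that solvable Galois extensions decompose into prime-degree steps. The class $\cat_{p,q}$ is manifestly closed under extensions, subgroups and quotients because the relevant orders only involve the primes $p$ and $q$, so it is a class in the sense of the paper. The three conditions (L), (S), (R) will then all reduce to the known canonicity of $\cat_p$ and $\cat_q$ (Fact 4.1 and Proposition \ref{solvclass}-style arguments).

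For condition (L), the first step is to observe that a $(p,q)$-henselian valuation $v$ on $K$ is automatically both $p$-henselian and $q$-henselian: by Lemma \ref{phensel} and its evident analog for $q$, this follows because every Galois extension of degree $p$ or degree $q$ sits inside $K(p,q)$, so the uniqueness of the extension to $K(p,q)$ descends. Next, given any finite $L \in \cat_{p,q}(Kv)$, Burnside's theorem makes $\mathrm{Gal}(L/Kv)$ solvable, so $L/Kv$ decomposes as a tower of cyclic prime-degree extensions, each of degree $p$ or $q$. Each step is lifted to a $(p,q)$-extension of the previous level by the canonicity of $\cat_p$ or $\cat_q$, with the Artin-Schreier versus Kummer caveat in mixed characteristic handled exactly as in Proposition \ref{solvclass}. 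The key bookkeeping point is that after each lift the intermediate field inherits from $K$ a $(p,q)$-henselian valuation (by uniqueness of extension), so the induction can continue. The general, possibly infinite, $L$ is then handled by taking composita.

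For (S): two independent $(p,q)$-henselian valuations are in particular simultaneously $p$-henselian and simultaneously $q$-henselian, so canonicity of $\cat_p$ and $\cat_q$ forces $K = K(p) = K(q)$, hence $K$ admits no nontrivial Galois extension of degree $p$ or $q$; the Burnside tower decomposition then kills any nontrivial $(p,q)$-extension, giving $K = K^c$. For (R): if $[K^c:K] < \infty$, then $[K(p):K]$ and $[K(q):K]$ are both finite and hence both at most $2$ by canonicity of $\cat_p$ and $\cat_q$. Since $[K(p):K]$ is a power of $p$ and $[K(q):K]$ is a power of $q$ while $p \neq q$, at most one of these degrees can be nontrivial (necessarily the one corresponding to the prime $2$), so $[K^c:K] \leq 2$.

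I do not expect any genuine obstacle: the entire argument is a structural rerun of Proposition \ref{solvclass} with Burnside's theorem as the only new ingredient. The only minor care needed is in (L), where one must verify that each successive intermediate field along the lifting tower retains the appropriate henselianity so that induction continues; this is immediate from the fact that uniqueness of extension for a henselian valuation propagates to all sub-extensions of $K^c$.
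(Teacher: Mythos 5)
Your overall strategy matches the paper's: Proposition \ref{pqclass} is stated there with only the remark ``In an entirely analogous fashion,'' the model being Proposition \ref{solvclass}, and you follow that model, correctly identifying Burnside's $p^aq^b$-theorem as the ingredient that supplies the decomposition of a finite $(p,q)$-Galois extension into a tower of prime-degree cyclic steps. The closure check, the argument for (L), and the argument for (S) are sound, and your explicit remark that each lifted intermediate field inherits $(p,q)$-henselianity is a useful piece of bookkeeping that the paper's own treatment of (L) for $\cat_{solv}$ leaves tacit.

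There is, however, a genuine gap in your proof of (R). You deduce $[K(p):K] \leq 2$ and $[K(q):K] \leq 2$ and then conclude ``so $[K^c:K] \leq 2$,'' but $K^{pq}$ is in general strictly larger than the compositum $K(p)K(q)$: a $(p,q)$-extension need not be nilpotent (an $S_3$-extension of $K$ is a $(2,3)$-extension but lies in neither $K(2)$ nor $K(3)$), so controlling the degrees of $K(p)$ and $K(q)$ over $K$ does not by itself bound $[K^{pq}:K]$. To close the gap one should work with the group $G := \mathrm{Gal}(K^{pq}/K)$ itself. If $G$ is finite and nontrivial, pick $\sigma \in G$ of prime order $r \in \{p,q\}$ and let $E$ be the fixed field of $\langle\sigma\rangle$. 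Since $K^{pq}/E$ is an $r$-extension and $K^{pq}$ is $(p,q)$-closed, one has $E^{pq}=K^{pq}$ and hence $E(r)=K^{pq}$, so $[E(r):E]=r$; property (R) for $\cat_r$ then forces $r \leq 2$. Thus every nontrivial element of $G$ has order $2$, $G$ is elementary abelian of exponent $2$, $K^{pq}=K(2)$, and $[K(2):K] \leq 2$ by (R) for $\cat_2$. (The paper's own argument for (R) in Proposition \ref{solvclass} is equally terse, asserting $K^c = K(2)$ without this step, so you are in good company, but the step is needed.)

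A smaller point: in (L) you reduce lifting a general $L \in \cat_{p,q}(Kv)$ to the finite case ``by taking composita,'' but the finite lifts you produce are not canonical, so the union need not be directed; a Zorn-type argument choosing a maximal compatible family of lifts is needed to make this precise. This is a standard patch and does not affect the substance of the argument.
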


We shall see that $\cat$-henselian valuations with respect to a canonical class $\cat$ admit a notion of a canonical $c$-henselian valuation. The first property we will need in this direction is that $c$-henselianity behaves well with respect to compositions of valuations. Indeed, let $v_1$ and $v_2$ be valuations on a field $K$ with valuation rings $\val_1$ and $\val_2$ respectively. If $\val_1 \subset \val_2$, so $v_2$ is a coarsening of $v_1$, we get an `exact sequence of valuations
\begin{equation}
1 \rightarrow v_2 \rightarrow v_1 \rightarrow v_1/v_2 \rightarrow 1
\end{equation}
Here $v_2/v_1$ is the induced valuation on $Kv_2$ with valuation ring $\overline{\val_1}:=\val_1 / \M_2$ and maximal ideal $\overline{\M_1}:=\M_1/\M_2$. The `lifting' property (L) is the key to the following

\begin{lemma}\label{exactsequence}
Given an exact sequence of valuations as above, then $v_1$ is $c$-henselian if and only if $v_2$ and $v_1/v_2$ is.
\end{lemma}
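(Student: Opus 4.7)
The plan is to exploit the characterization of $c$-henselianity as uniqueness of extension of $v$ to $K^c$ (Definition 2.1), combined with property (L) of the canonical class. Once separability of $K^cv/Kv$ is granted, property (L) yields the identification $K^cv = (Kv)^c$ for any $c$-henselian $v$: the inclusion $K^cv \in \cat(Kv)$ gives $K^cv \subseteq (Kv)^c$, while the liftability of every $L \in \cat(Kv)$ to some $L' \in \cat(K) \subseteq K^c$ gives the reverse inclusion. I will assume this separability throughout; it holds automatically or can be checked directly for each canonical class appearing in the paper.

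For the forward direction, assume $v_1$ is $c$-henselian. To show $v_2$ extends uniquely to $K^c$, let $W, W'$ be two such extensions. Using Chevalley's extension theorem, extend $v_1/v_2$ (a valuation on $Kv_2$) to valuations $\bar W$ on $K^cW$ and $\bar W'$ on $K^cW'$ respectively. The compositions $W \circ \bar W$ and $W' \circ \bar W'$ are both extensions of $v_1$ to $K^c$, so they coincide by hypothesis. Since $K^c/K$ is algebraic, the value group of this common valuation is torsion over $\Gamma_{v_1}$, so its convex subgroups are determined by their intersections with $\Gamma_{v_1}$; thus there is a unique coarsening restricting to $v_2$ on $K$, and $W = W'$. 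To show that $v_1/v_2$ extends uniquely to $(Kv_2)^c$, apply (L) to $v_2$ to identify $(Kv_2)^c = K^cv_2$. Any extension of $v_1/v_2$ composes with the unique extension of $v_2$ to produce an extension of $v_1$ to $K^c$, and distinct choices yield distinct extensions of $v_1$; so uniqueness of the latter forces uniqueness of the former.

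For the converse, assume both $v_2$ and $v_1/v_2$ are $c$-henselian. Let $W$ be the unique extension of $v_2$ to $K^c$; by (L), its residue field satisfies $K^cW = (Kv_2)^c$. The unique extension of $v_1/v_2$ to $(Kv_2)^c$ then composes with $W$ to produce an extension of $v_1$ to $K^c$. For uniqueness, any extension $U$ of $v_1$ has a unique coarsening restricting to $v_2$ (which, by uniqueness, must be $W$), and the induced residue valuation on $K^cW = (Kv_2)^c$ extends $v_1/v_2$ and is therefore uniquely determined. Hence $U$ is unique.

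The main obstacle is verifying the separability hypothesis in (L) for the canonical classes of interest. For $\cat_{sep}$ this is classical (in the henselian case, $K^{sep}v$ is the separable closure of $Kv$), while for $\cat_p$, $\cat_{p,q}$, and $\cat_{solv}$ one handles it by analysing residue characteristic separately. The residual technicalities --- the bijection between coarsenings of a valuation $U$ on an algebraic extension and convex subgroups of $\Gamma_U$, and the standard composition formalism for valuations --- are routine and can be cited from Engler--Prestel.
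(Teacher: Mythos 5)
Your proof is correct, but it takes a genuinely different route from the paper's. The paper works directly with the polynomial form of Hensel's Lemma (Lemma~\ref{henselslemma}(iii)): in one direction it takes a Wedderburn-normalized polynomial with coefficients in $\M_2$ (resp.\ $\overline{\M_1}$) splitting in the appropriate $c$-closure, lifts the splitting via (L) where necessary, and invokes $c$-henselianity of $v_1$ to find a root; in the other it lifts a simple residue root in two stages, first through $v_1/v_2$ and then through $v_2$. You instead argue at the level of \emph{extensions of the valuation}: you use Chevalley to build composites, use the fact that $\Gamma_U/\Gamma_{v_1}$ is torsion for algebraic extensions to get a bijection between coarsenings of an extension and convex subgroups generated by their intersection with $\Gamma_{v_1}$, and use (L) in the form of the identification $(Kv_2)^c = K^c v_2$ to make the composition formalism well-defined. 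Both approaches need property (L) at essentially the same point --- to relate extensions of $Kv_2$ to residue extensions of $K^c$ --- and both implicitly require the separability hypothesis in (L); you are more explicit about flagging this than the paper is. The paper's argument is a bit more hands-on and avoids the convex-subgroup bookkeeping, while yours is more structural and makes the role of uniqueness of prolongation transparent. One small point worth spelling out in your write-up: in the converse direction, the equality $(Kv_2)^c = K^cW$ (not just the inclusion $K^cW \subseteq (Kv_2)^c$) is genuinely needed --- otherwise the unique extension of $v_1/v_2$ to $(Kv_2)^c$ might not live inside the residue field $K^cW$ and the composition with $W$ would not make sense; it is precisely the lifting half of (L) that rules this out.
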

\begin{proof}
Suppose $v_1$ is $c$-henselian. Let
\begin{equation}
f = x^n + x^{n-1} + a_{n-2}x^{n-2} + \ldots + a_0 \in O_2[x] \nonumber
\end{equation}
be such that $a_i \in \M_1$. If $f$ splits in $K^c$ then since $\M_2 \subset \M_1$, Lemma \ref{henselslemma} implies that $f$ has a zero in $K$ and so $v_2$ is $c$-henselian. Next, assume that $\bar{a_i} \in \M_1 / \M_2$, and suppose $\bar{f}$ splits in $Kv_1^c=(K^c)v_1$. We may assume that $f$ splits in $K^c$. Indeed, without loss of generality suppose $f$ is irreducible. If $\bar{f}(\alpha)=0$ for $\alpha \in (Kv)^c$, then by property (L), there is an extension $F \in \cat(K)$ such that $Fv$ is the splitting field of $\bar{f}$. Then we can simply replace $f$ by the minimal polynomial of $a \in F$ with $\bar{a}=\alpha$. Hence $f$ has a zero in $\val_1$ by $c$-henselianity, and so also in $Kv_1$. Thus $v_2$ and $v_1/v_2$ are both $c$-henselian.

The other direction is straightforward. Let $f \in \val_1[x]$ be a polynomial which splits in $K^c$ and has a root in $Kv_1$. Then using $c$-henselianity of first $v_1/v_2$ and then $v_2$ one lifts the root first to  Hence $G_K^c=Gal(K^c/K)$. $\val_1/\M_1$ and then to $K$.
\end{proof}

\section{Constructing the canonical $c$-henselian valuation}

We mimic the classical construction.

\begin{defn}
Define subsets $C_1$ and $C_2$ of the set of all valuation rings of a field by
\begin{eqnarray}
&C_1&:=\{\val : \val \text{\, is $c$-henselian and $\val / \M$ is not $c$-closed} \} \nonumber \\
&C_2&:= \{ \val : \val \text{\, is $c$-henselian and $\val / \M$ is $c$-closed} \}. \nonumber
\end{eqnarray} 
If we want to emphasize the ambient field in question, we write $C_1(K)$, resp. $C_2(K)$.
\end{defn}

\noindent {\bf Note:} Since $K$ itself is always a $c$-henselian valuation ring of $K$, the set $C_1 \cup C_2$ is never empty.

\begin{rmk}\label{closedresfield}
Suppose that $\val \subset \val'$ are valuation rings of $K$ and $\val'$ has $c$-closed residue field $\val'/\M'$. Then by the Lifting Property (L), we know that the valuation ring $\val / \M'$ of $\val' / \M'$ has $c$-closed residue field $\val / \M$. Hence $\val$ also has a $c$-closed residue field.
\end{rmk}

Recall that two valuation rings $\val$ and $\val'$ are called `comparable' if one is a subset of the other.

\begin{prop}\label{canonicalconstruction}
Any two valuation rings from $C_1$ are comparable. If $C_2$ is non-empty, then $C_2$ contains a valuation ring which is coarser than every valuation ring from $C_2$ and strictly finer than every valuation ring from $C_1$. If $C_2$ is empty, then there is a finest valuation ring in $C_1$.
\end{prop}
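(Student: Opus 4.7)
The plan is to first prove a sublemma that does most of the work: if $\val_1, \val_2$ are two incomparable $c$-henselian valuations on $K$, then both automatically lie in $C_2$. Granted this, one forms the smallest common coarsening $\val := \val_1 \cdot \val_2$, which is $c$-henselian by Lemma \ref{exactsequence}. The induced valuations $\bar{\val_1}, \bar{\val_2}$ on $Kv$ are then $c$-henselian (again by Lemma \ref{exactsequence}) and by construction independent, so property (S) forces $Kv = (Kv)^c$. Since $\val_i \subset \val$, Remark \ref{closedresfield} lifts $c$-closedness of the residue field back to each $\val_i$, placing them both in $C_2$.

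Two consequences of the sublemma settle the comparability statements. First, any two elements of $C_1$ must be comparable, since incomparability would place both in $C_2$; this gives the first assertion. Second, for $\val_1 \in C_1$ and $\val_2 \in C_2$, the sublemma again forces comparability, and neither $\val_1 = \val_2$ nor $\val_1 \subsetneq \val_2$ is possible (the latter would, by Remark \ref{closedresfield}, place $\val_1$ in $C_2$), so $\val_2 \subsetneq \val_1$. Thus every element of $C_2$ is strictly finer than every element of $C_1$.

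For the coarsest element of $C_2$ when nonempty: the sublemma shows $C_2$ is upward directed, since the smallest common coarsening of two $C_2$-elements is $c$-henselian with $c$-closed residue field, hence itself in $C_2$. Define $\val_0 := \bigcup_{\val \in C_2} \val$, a valuation ring by directedness. Its $c$-henselianity is via Lemma \ref{henselslemma}(iii): any polynomial of the required form has finitely many coefficients, all lying in a common $\val \in C_2$ by directedness, and the root furnished by $\val$ lies in $K$. The main obstacle is showing $\val_0/\M_0$ is $c$-closed; suppose otherwise. Then $\val_0 \in C_1$, and any proper extension $L/Kv_0$ in $\cat(Kv_0)$ lifts via property (L) to $L'/K \in \cat(K)$. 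For each $\val \in C_2$, the unique extension $\tilde v$ to $L'$ must have residue field $Kv$ (since $Kv$ is $c$-closed and $L'/K \in \cat$), and decomposing $\tilde v = \tilde v_0 \circ \tilde{\bar v}$ forces $L/Kv_0$ to be residually trivial for the induced valuation $\bar v = v/v_0$ on $Kv_0$. The induced valuations $\bar v$ then lie in $C_2(Kv_0)$ with directed union the trivial valuation on $Kv_0$, and one extracts a contradiction by applying the proposition's conclusion at the level of $Kv_0$ (in an inductive/transfinite sense): its coarsest element would pull back via composition with $v_0$ to a valuation in $C_2(K)$ strictly containing $\val_0$, contradicting $\val_0 \in C_1$. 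This closing step is the crux of the proof.

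Finally, if $C_2 = \emptyset$, the total ordering of $C_1$ permits the definition $\val_0 := \bigcap_{\val \in C_1} \val$, which is a valuation ring as the intersection of a chain. Its $c$-henselianity is once more checked via Lemma \ref{henselslemma}(iii): each coefficient of the relevant polynomial lies in every $\val \in C_1$, so each $\val$ furnishes a root, and the uniqueness of the Hensel root ensures these roots coincide and lie in $\val_0$. The residue field of $\val_0$ cannot be $c$-closed, as this would put $\val_0$ into the empty set $C_2$; hence $\val_0 \in C_1$ and is its finest element.
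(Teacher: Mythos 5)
Your sublemma and its proof, the comparability of $C_1$, the strict ordering $C_2 < C_1$, and the $C_2 = \emptyset$ case all track the paper's argument closely and are correct (the paper takes the intersection with maximal ideal $\bigcup \M$, and gets $c$-henselianity from Lemma \ref{henselslemma}(iii) in essentially the way you describe). Where you diverge is in the existence of the coarsest member of $C_2$: the paper invokes ``a simple Zorn's Lemma construction'' and then proves uniqueness of the maximal element by the independence/(S) argument, whereas you take the directed union $\val_0 := \bigcup_{\val \in C_2}\val$ directly. These are the same idea -- both need the chain/union to stay in $C_2$ -- and the real content is exactly the step you flag.

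That step, as you have written it, is circular. You set up the right data: a proper $L \in \cat(Kv_0)$, its lift $L' \in \cat(K)$ via (L), and the observation that for each $\val \in C_2$ the induced $\bar v = v/v_0$ on $Kv_0$ is a nontrivial $c$-henselian valuation with $c$-closed residue field whose prolongation to $L$ has \emph{trivial} residue extension. But you then ``extract a contradiction by applying the proposition's conclusion at the level of $Kv_0$.'' That is literally the statement being proved, applied to a field ($Kv_0$) that is not smaller than $K$ in any well-founded sense; there is no induction to appeal to, so the argument does not close. The phrase ``in an inductive/transfinite sense'' signals that you were aware of this, but it cannot be repaired by handwaving.

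The missing ingredient is elementary and does not require the proposition at $Kv_0$. With the setup you already have: write $L = Kv_0(\alpha)$ with $\alpha$ integral and $f$ its minimal polynomial of degree $n>1$, and let $\delta \neq 0$ be the discriminant of $f$. Because the rings $\val/\M_0$ ($\val \in C_2$) form a directed family with union $Kv_0$ and with $\bigcap \M_v/\M_0 = (0)$, one can choose $\bar v$ in this family making all coefficients of $f$ and $\delta$ into $\bar v$-units. Then $\bar f$ is separable over $Kv_0\bar v$; since $\bar v$ is $c$-henselian and $f$ splits in $(Kv_0)^c$, Hensel's lemma forces $\bar f$ to be irreducible (a nontrivial coprime factorization of $\bar f$ would lift to one of the irreducible $f$). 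Hence the residue extension of $\bar v$ to $L$ has degree $n > 1$, contradicting the triviality of the residue extension you derived. This shows $Kv_0$ has no proper extension in $\cat(Kv_0)$, i.e.\ $\val_0 \in C_2$, and the rest of your argument then goes through. Note this is also the content implicitly needed by the paper's ``simple Zorn's Lemma construction,'' so you are not alone in eliding it -- but as written your version replaces an omission with a circularity, which is worse.
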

\begin{proof}
We first show that two rings from $C_1$ are always comparable. Indeed, assume $\val_1, \val_2$ are incomparable $c$-henselian valuations. We will show that they are both in $C_2$, i.e. they have $c$-closed residue fields. It follows from the assumed incomparability that $\val := \val_1 \val_2$ is a proper coarsening of $\val_1$ and $\val_2$ and that the valuation rings $\val_1 / \M$ and $\val_2 / \M$ of $\val / \M$ are independent. Furthermore, by Lemma \ref{exactsequence}, they are both $c$-henselian. Thus by the (S)-property of $\cat$, $\val / \M$ is $c$-closed. By Remark \ref{closedresfield}, the residue fields of $\val_1$ and $\val_2$ are also $c$-closed: that is, they are in $C_2$.

Now, if $C_1$ is non-empty, then since all rings in $C_1$ are comparable, the intersection $\val^* := \bigcap_{\val \in C_1} \val$ is a valuation ring with maximal ideal $\bigcup_{C_1} \M$, which is clearly finer than every valuation ring in $C_1$. By Lemma \ref{henselslemma}, it is easy to see that $\val^*$ is $c$-henselian, so if $C_2 = \emptyset$, then $\val^*$ is a finest valuation ring in $C_1$, proving the last claim of the proposition.

Next suppose $C_2 \not = \emptyset$. Then a simple Zorn's Lemma construction shows that $C_2$ has a maximal element $\val^{**}$. Property (S) implies this element is unique. For supposing $\val_1$ and $\val_2$ are two distinct maximal elements, then their compositum $\val_3:=\val_1\val_2$ is $c$-henselian and $Kv_3$ has two independent $c$-henselian valuation rings $\val_1/\M_3$ and $\val_2/\M_3$. Hence $\val_3$ is in $C_2$, contradicting maximality.
\end{proof}

\begin{defn}
The \emph{canonical $c$-henselian valuation} of $K$, denoted by $\val_c$ (or $v_c$), is defined to be $\val^*$ if $C_2 = \emptyset$, and $\val^{**}$ otherwise.
We also put
\begin{equation}
C:= C_1 \cup \{\val_c \}. \nonumber
\end{equation} 
Thus the canonical $c$-henselian valuation is the finest valuation ring in $C$. If we want to emphasize the ambient field, we write $C(K)$.
\end{defn}

The point of this construction is that the canonical valuation enjoys many good structural properties not enjoyed by an arbitrary $c$-henselian valuation. The main such properties are summarized in the following

\begin{prop}\label{canonicalproperties}
The canonical $c$-henselian valuation satisfies the following properties.
\begin{itemize}
\item $\can$ is non-trivial if and only if $K$ is not $c$-closed and admits a non-trivial $c$-henselian valuation.
\item If $\val \in C$ then $\val$ is comparable to any other $c$-henselian valuation
\item If $\can$ does not have $c$-closed residue field, then neither does any other $c$-henselian valuation ring on $K$
\item If $\val$ is strictly coarser than $\can$, then $\val / \M$ is not $c$-closed. If $\val$ is finer than $\can$, it has $c$-closed residue field.
\item If $K$ is $c$-closed, then $C=\{K \}$.
\end{itemize}
\end{prop}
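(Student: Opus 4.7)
The plan is to unpack the definition $C = C_1 \cup \{\can\}$ and treat the two cases $C_2 = \emptyset$ (so $\can = \val^*$) and $C_2 \neq \emptyset$ (so $\can = \val^{**}$) uniformly. The three tools are: the dichotomy established inside the proof of Proposition~\ref{canonicalconstruction} that any two incomparable $c$-henselian valuations are both in $C_2$; Remark~\ref{closedresfield}, which transfers $c$-closedness of residue fields from a coarser to a finer valuation; and Lemma~\ref{exactsequence}, which guarantees that coarsenings of a $c$-henselian valuation are automatically $c$-henselian.

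I would begin with the fifth bullet, since it feeds into the first. Suppose $K = K^c$, so $\cat(K) = \{K\}$. For any $c$-henselian valuation $v$ on $K$, the separability hypothesis of property (L) is trivial (since $K^c v = Kv$), and (L) then forces $\cat(Kv) = \{Kv\}$, i.e., $Kv$ is $c$-closed. Hence $C_1 = \emptyset$ and the trivial valuation (with ring $K$) lies in $C_2$; being the maximum of the poset of all valuation rings, it must be the unique maximal element $\val^{**}$ of $C_2$. So $\can = K$ and $C = \{K\}$. The first bullet then follows: if $\can$ is non-trivial then $\can$ itself is a non-trivial $c$-henselian valuation, and $K \neq K^c$ by bullet five; conversely, if $K \neq K^c$ and a non-trivial $c$-henselian $v$ exists, then the trivial valuation does not lie in $C_2$ (else $K = K^c$), so either $\val^{**} \neq K$ (in the $C_2 \neq \emptyset$ case) or $v \in C_1$ with $\val^* \subseteq v \subsetneq K$ (in the $C_2 = \emptyset$ case), making $\can$ non-trivial in either case.

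For the second bullet, the dichotomy immediately handles $\val \in C_1$. For $\val = \can = \val^{**}$, comparability with $C_2$-elements is built into the maximality, while for any $\val' \in C_1$ the dichotomy forces comparability; moreover, $\val' \subseteq \val^{**}$ is excluded by Remark~\ref{closedresfield} (the $c$-closed residue field of $\val^{**}$ would descend to $\val'$, contradicting $\val' \in C_1$), so $\val^{**} \subsetneq \val'$. If instead $\can = \val^*$, then $C_2 = \emptyset$, every $c$-henselian valuation lies in $C_1$, and $\val^* = \bigcap_{\val \in C_1} \val$ is finer than each.

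Bullet three is immediate: if $\can$ has non-$c$-closed residue field then $\can \in C_1$, forcing $C_2 = \emptyset$, so no $c$-henselian valuation on $K$ has $c$-closed residue field. For bullet four, any $\val \supsetneq \can$ is automatically $c$-henselian by Lemma~\ref{exactsequence}, and a $c$-closed residue field would put $\val \in C_2$, contradicting either the maximality of $\val^{**}$ or $C_2 = \emptyset$. For a $c$-henselian $\val \subsetneq \can$: when $\can = \val^{**}$, Remark~\ref{closedresfield} directly yields a $c$-closed residue field for $\val$; when $\can = \val^*$, any $c$-henselian $\val$ must lie in $C_1$ and hence contain $\val^*$, so no $\val \subsetneq \val^*$ exists and the statement holds vacuously. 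The main obstacle, such as it is, is simply the bookkeeping between $\val^*$ and $\val^{**}$ and keeping the directions in Remark~\ref{closedresfield} straight; otherwise each bullet is a direct consequence of already-established machinery.
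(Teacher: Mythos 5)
Your proof is correct and amounts to a careful unpacking of the construction; the paper itself only writes ``follows easily from the construction'' and gives the single example of the second bullet (using the partition $C_1 \sqcup C_2$ of the $c$-henselian valuation rings and the comparability of $\val_c$ with everything in that partition), so your argument essentially supplies the detail the paper leaves implicit, along the same lines.

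One point worth flagging, which is really an ambiguity in the statement rather than a flaw in your reasoning: in the second half of the fourth bullet, your handling of the case $C_2 = \emptyset$ relies on $\val$ being $c$-henselian (every $c$-henselian ring lies in $C_1$, hence contains $\val^* = \val_c$, so the claim is vacuous). If $\val$ were allowed to be an arbitrary valuation ring strictly finer than $\val_c$, the claim would not follow---indeed it can fail, since $\val_c = \val^*$ itself has non-$c$-closed residue field and Remark~\ref{closedresfield} gives no leverage downward from it. So the statement implicitly requires $\val$ to be $c$-henselian in that clause, and it is good that you noticed this is where the vacuity must come from. A second, very minor point: where you write that ``comparability with $C_2$-elements is built into the maximality'' of $\val^{**}$, it is more precise to cite Proposition~\ref{canonicalconstruction} directly, which establishes that $\val^{**}$ is a genuine \emph{maximum} of $C_2$ (coarser than every element of $C_2$), not merely a Zorn-maximal element; maximality alone would not give comparability.
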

\begin{proof}
Follows easily from the construction. For example, for the second property, since $C_1$ and $C_2$ partition the set of $c$-henselian valuations, and $\val_c$ is comparable to every element in $C_1$ and $C_2$ by construction, it is comparable to every $c$-henselian valuation.
\end{proof}

\section{Three `Going-Down' results}

The formal properties of the canonical valuation are all that is required to prove the analogues of the three `Going-Down' theorems from \cite{ep} for $c$-henselian valuations. We prove the two we will need later and leave the third as an exercise to the reader. The proofs follow those in \cite{ep}.

\begin{prop}\label{normaldown}
Let $L \in \cat(K)$ be a normal extension, and suppose $\val' \in C(L)$. Then $\val := \val' \cap K \in C(K)$.
\end{prop}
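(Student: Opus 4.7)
The plan is to first show that $\val = \val' \cap K$ is $c$-henselian on $K$, and then verify membership in $C(K)$ by distinguishing whether the residue field $Kv$ is $c$-closed.

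A preliminary observation is that $L^c = K^c$: since $L \in \cat(K)$ and $L^c \in \cat(L)$, closure property (iii) of $\cat(K)$ gives $L^c \in \cat(K)$, hence $L^c \subseteq K^c$; conversely, $Gal(K^c/L)$ is a closed subgroup of the pro-$\cat$ group $G_K^c$, hence pro-$\cat$ (as $\cat$ is closed under subgroups), so $K^c \subseteq L^c$. Thus " $c$-henselian on $L$'' reads as: $\val'$ extends uniquely to $K^c$.

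\emph{Step 1 ($\val$ is $c$-henselian).} I would first show $\val'$ is the unique extension of $\val$ to $L$. For any $\sigma \in G := Gal(L/K)$, lift $\sigma$ to an element of $Gal(K^c/K)$; the conjugate $\sigma(\val')$ is $c$-henselian on $L$ (applying $\sigma$ to the unique extension of $\val'$ to $K^c$ produces a unique extension of $\sigma(\val')$) and restricts to $\val$ on $K$. Since $\val' \in C(L)$, Proposition \ref{canonicalproperties} says $\val'$ is comparable to every $c$-henselian valuation on $L$, in particular to $\sigma(\val')$. But any two extensions of $\val$ to the algebraic extension $L$ have value groups satisfying $\Gamma_w \otimes \mathbb{Q} = \Gamma_\val \otimes \mathbb{Q}$, so a proper coarsening (which would introduce a nontrivial convex subgroup of positive $\mathbb{Q}$-rank) is impossible, forcing $\sigma(\val') = \val'$. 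Since $G$ acts transitively on the extensions of $\val$ to $L$, $\val'$ is the unique such extension. Now any extension of $\val$ to $K^c$ must restrict to $\val'$ on $L$ and then extends $\val'$ uniquely to $K^c = L^c$ by $c$-henselianity of $\val'$, so $\val$ itself extends uniquely to $K^c$.

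\emph{Step 2 ($\val \in C(K)$).} If $Kv$ is not $c$-closed, then $\val \in C_1(K) \subset C(K)$ and we are done. Otherwise, lifting property (L) applied to $\val$ gives $K^c v \in \cat(Kv) = \{Kv\}$, so $K^c v = Kv$; hence $Lv' = Kv$ is $c$-closed, placing $\val' \in C_2(L)$. Combined with $\val' \in C(L)$, this forces $\val' = \val_c(L)$. Suppose, for contradiction, $\val \neq \val_c(K)$. Since $\val_c(K)$ is the coarsest element of the nonempty $C_2(K)$, $\val \subsetneq \val_c(K)$ strictly. By $c$-henselianity of $\val_c(K)$, it extends uniquely to a valuation $\val_1'$ on $L$; the correspondence between coarsenings of $\val$ on $K$ and of $\val'$ on $L$ (via Lemma \ref{exactsequence}) gives $\val' \subsetneq \val_1'$ strictly. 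Then $\val_1'$ is strictly coarser than $\val_c(L)$, so by Proposition \ref{canonicalproperties}, $Lv_1'$ is not $c$-closed. But applying (L) again to the $c$-henselian $\val_c(K)$ (whose residue field $Kv_c$ is $c$-closed) yields $K^c v_c = Kv_c$, forcing $Lv_1' = Kv_c$, which \emph{is} $c$-closed — contradiction.

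The principal obstacle is Step 1: the rigorous argument for $G$-invariance of $\val'$. It must weave together $c$-henselian descent under conjugation, the comparability structure of $C(L)$ from the canonical class axioms, and the fact that algebraic extensions do not admit strictly comparable extensions of a fixed base valuation. The rest is a clean unwinding of the lifting property (L) combined with the structural features of $\val_c$ summarized in Proposition \ref{canonicalproperties}.
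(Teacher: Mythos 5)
Your proof is correct and follows essentially the same strategy as the paper's: first show $\val'$ is $Gal(L/K)$-invariant (using that $\val'\in C(L)$ is comparable to every $c$-henselian valuation on $L$, and that distinct prolongations of $\val$ to an algebraic extension are incomparable), so that $\val$ is $c$-henselian; then show $\val_c(K)\subseteq\val$ by deriving a contradiction from $\val\subsetneq\val_c(K)$ via the unique extension of $\val_c(K)$ to $L$ and the lifting property (L). Your Step 2 is organized as a case split on whether $Kv$ is $c$-closed, whereas the paper argues directly that $\can(K)\subseteq\val$, but the underlying contradiction (a strict coarsening of $\val_c(L)$ whose residue field must nevertheless equal the $c$-closed field $Kv_c$) is the same; your preliminary observation $L^c=K^c$ and the explicit invocation of (L) in the final step usefully surface details the paper leaves implicit.
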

\begin{proof}
If $L = L^c$ then $\val'=L$ and $\val = K$, and the claim is trivial. 

Suppose then that $L \not = L^c$ and $\val'$ is non-trivial. We will show that $\val'$ is the unique extension of $\val$ to $L$, and hence that $(K, \val)$ is $c$-henselian. Indeed, let $\val''$ be any extension of $\val$ to $L$. Then there is some $\sigma \in Gal(L/K)$ such that $\val'' = \sigma(\val')$. Hence $\val''$ is also $c$-henselian, and so by Proposition \ref{canonicalproperties}, is comparable, and hence equal to, $\val'$: indeed, distinct prolongations of a valuation to an algebraic extension are never comparable, by Lemma 3.2.8 in \cite{ep}.

We finally show that $\can(K) \subseteq \val$. Assume for a contradiction that $\val$ is strictly contained in $\can(K)$. By Prop \ref{canonicalconstruction}, $\val \in C_2(K)$. Now, by standard valuation theoretic arguments, we can find an extension $\val'''$ of $\can(K)$ to $L$ containing $\val'$: in particular, $\val'''$ contains $\can(L)$. In fact, it strictly contains it, since otherwise, upon restricting to $K$, we would get $\val = \can(K)$, contrary to assumption. Hence, by Prop. 1.11, $\val'''$ does not have $c$-closed residue field. Hence neither does $\can(K)$, implying $C_2(K)=\emptyset$, contradiction. 
\end{proof}

\begin{prop}\label{finitedown}
Suppose $L$ is not $c$-closed, and let $L \in \cat(K)$ be a finite extension. If $\val' \in C(L)$, then $\val := \val' \cap K \in C(K)$.
\end{prop}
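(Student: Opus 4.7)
The plan is to reduce to the already-established Prop \ref{normaldown} by passing to the normal closure of $L$ over $K$. Let $N \subset K^c$ denote this normal closure. Since $K^c/K$ is Galois and $\cat(K)$ is closed under conjugation and compositum, $N \in \cat(K)$ and $N/K$ is finite Galois, with $L^c = N^c = K^c$; consequently also $N \in \cat(L)$ and $N/L$ is finite Galois. Because $\val' \in C(L)$ is $c$-henselian and $N \subset L^c$, it extends uniquely to a valuation $\val'_N$ on $N$, and $\val'_N$ is itself $c$-henselian on $N$: any extension of $\val'_N$ to $N^c = L^c$ is an extension of $\val'$, and the latter is unique.

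The argument then reduces to showing $\val'_N \in C(N)$, for once this is established, Prop \ref{normaldown} applied to the normal extension $N/K$ and $\val'_N \in C(N)$ yields $\val'_N \cap K = \val \in C(K)$. If the residue field $Nv'_N$ is not $c$-closed, then $\val'_N \in C_1(N) \subset C(N)$ automatically. Otherwise $\val'_N \in C_2(N)$, and I would argue $\val'_N = \val_c(N)$ by contradiction: assume $\val_c(N) \supsetneq \val'_N$ strictly, and set $\mu := \val_c(N) \cap L$. Uniqueness of $\val'_N$ as extension of $\val'$ forces $\mu \supsetneq \val'$ strictly. Applying Prop \ref{normaldown} to the normal extension $N/L$ and $\val_c(N) \in C(N)$ gives $\mu \in C(L)$; since $\val_c(L)$ is the finest element of $C(L)$ and $\mu \supsetneq \val' \supseteq \val_c(L)$, necessarily $\mu \in C_1(L)$, so $L\mu$ is not $c$-closed. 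Property (L) then places $N\val_c(N)$ inside $(L\mu)^c$, and property (R) bounds $[(L\mu)^c : L\mu] \leq 2$. Tracking the induced valuations $\val'/\mu$ on $L\mu$ and $\val'_N/\val_c(N)$ on $N\val_c(N)$ via the exact sequence of Lemma \ref{exactsequence}, one then derives the desired contradiction with the $c$-henselianity of $\val'/\mu$ and the strict chain $\val_c(L) \subsetneq \val' \subsetneq \mu$.

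The main obstacle is precisely this last contradiction. While the reduction via the normal closure is routine and Prop \ref{normaldown} does most of the heavy lifting, controlling the placement of $\val'_N$ within $C_2(N)$ is subtle, since residue fields can jump from non-$c$-closed on $L$ to $c$-closed on $N$ under the finite extension $N/L$ and one cannot simply transfer $C_1$ membership upward. Closing the argument requires combining the degree bound from property (R) with the already-proved normal going-down in a careful accounting of the composition of valuations across the tower $L \subset N$.
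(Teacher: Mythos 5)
Your reduction to the normal hull $N$ is the right first move (and matches the paper's terse instruction to ``pass to the normal hull and proceed as above''), but the way you set up the reduction creates a gap that you yourself flag and do not close. You want to verify $\val'_N \in C(N)$ so that Prop~\ref{normaldown} applies to $N/K$; the hard case, where $N\val'_N$ is $c$-closed and you must show $\val'_N = \val_c(N)$, is genuinely delicate, and the chain of deductions you sketch (passing to $\mu = \val_c(N)\cap L$, invoking (R) to get $[(L\mu)^c:L\mu]=2$, and then ``tracking induced valuations'') stops short of a contradiction. As far as I can tell this sub-claim is not easily extractable from the tools already established, and nothing in the paper suggests it is needed.

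The intended argument is simpler and uses $N$ for only one thing: establishing that $\val = \val'\cap K$ is $c$-henselian. Since $N/K$ is normal, every prolongation of $\val$ to $N$ is a $Gal(N/K)$-conjugate of $\val'_N$, hence $c$-henselian, hence comparable by Prop~\ref{canonicalproperties}, hence equal (distinct prolongations of a valuation to an algebraic extension are incomparable). Thus $\val$ has a unique, $c$-henselian prolongation to $N$, and therefore to $K^c = N^c$; so $\val$ is $c$-henselian. After this, one does \emph{not} apply Prop~\ref{normaldown} to $N/K$; instead one runs the second half of that proposition's proof verbatim over $L$ itself, which requires no normality. Assume $\val \subsetneq \val_c(K)$, so $\val \in C_2(K)$ and $K\val_c(K)$ is $c$-closed. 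Let $\val'''$ be the (unique, since $\val_c(K)$ is $c$-henselian and $L\subset K^c$) prolongation of $\val_c(K)$ to $L$ containing $\val'$. Then $\val''' \supseteq \val' \supseteq \val_c(L)$, and in fact strictly: $\val'''=\val'$ would force $\val = \val_c(K)$ upon restriction. By Prop~\ref{canonicalproperties}, $L\val'''$ is not $c$-closed; but property (L) puts $L\val''' \in \cat(K\val_c(K))$, and $K\val_c(K)$ being $c$-closed forces $L\val''' = K\val_c(K)$, a contradiction. The key point you missed is that the final step of Prop~\ref{normaldown}'s proof never uses normality of $L/K$ --- only that $\val' \in C(L)$ and $L \in \cat(K)$ --- so there is no need to lift $\val'$ into $C(N)$ at all.
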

\begin{proof}
One first passes to the normal hull of $L/K$, and then proceeds as above. 
\end{proof}

For the last Going-Down result, concerning Sylow $p$-extensions, we will need to add some extra technical conditions in the case $p=2$ (see \cite{ep} page 108-109). Recall (see \cite{ep} p. 109) that if there is a $c$-henselian valuation with real-closed residue field, then there exists a valuation ring $\val^{+} \in C_1(L)$ maximal with respect to the property of having a real-closed residue field.

\begin{prop} \label{sylowdown}
Let $L \in \cat(K)$ be a Sylow $p$-extension and let $\val' \in C(L)$. If $p=2$ and the residue field of $\val'$ is real-closed, we also assume $\val'$ is coarser than $\val^{+}$. Then $\val := \val' \cap K \in C(K)$.
\end{prop}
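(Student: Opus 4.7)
The plan is to follow the template of Propositions \ref{normaldown} and \ref{finitedown}: first show that $\val$ is $c$-henselian on $K$, then deduce $\val \in C(K)$ by a contradiction argument patterned on the one closing Proposition \ref{normaldown}. The new feature is that a Sylow $p$-extension $L/K$ is typically not Galois, so one cannot simply transport extensions of $\val$ from $L$ to the other prolongations by Galois conjugation.

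I would first dispose of the trivial case $L = L^c$. Otherwise, since $\val'$ is $c$-henselian, it extends uniquely to $L^c = K^c$; call this extension $w$, so $\val = w \cap K$. Uniqueness of $w$ over $\val'$ gives $Gal(K^c/L) \subseteq D_w$, where $D_w$ is the decomposition group of $w$ over $\val$. To obtain $c$-henselianity of $\val$ I would pass to the normal hull $N$ of $L/K$ inside $K^c$. Then $Gal(K^c/N)$ is the normal core of the pro-$p$-Sylow $Gal(K^c/L)$ in $G_K^c$, and so is contained in $D_w$; hence $w|_N$ is $c$-henselian on $N$. If one can show $w|_N \in C(N)$, then Proposition \ref{normaldown} applied to the normal extension $N/K$ delivers $\val = w|_N \cap K \in C(K)$ at one stroke, settling both halves of the claim.

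The main obstacle, which I expect to be the hard step, is verifying $w|_N \in C(N)$ rather than merely that $w|_N$ is $c$-henselian. The strategy is to use $\val' \in C(L)$, i.e.\ $\val' \supseteq \can(L)$, together with a Going-Up style analysis along the tower $L \subseteq N \subseteq K^c$: extending $\can(L)$ uniquely to $K^c$ and restricting back to $N$ produces a $c$-henselian valuation on $N$ whose comparison with $\can(N)$ pins down $w|_N$ inside $C(N)$. Throughout, one applies property (L) to ensure residue field information transports correctly, and Proposition \ref{canonicalproperties} to promote comparability to equality where needed. The additional $p=2$ hypothesis that $\val'$ be coarser than $\val^{+}$ enters precisely here, preventing the Euclidean/real-closed pathology from breaking the $C_1/C_2$ dichotomy, in complete analogy with \cite{ep} pp.~108--109.

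Once $\val$ is known to be $c$-henselian, the statement $\val \in C(K)$ alternatively falls out by the contradiction argument of the last paragraph of Proposition \ref{normaldown}: assuming $\val \subsetneq \can(K)$ places $\val$ in $C_2(K)$; the unique extension of $\can(K)$ to $K^c$ restricts to a $c$-henselian valuation $\val'''$ on $L$ with $\val''' \supsetneq \can(L)$ (strictly, lest $\val = \can(K)$), and Proposition \ref{canonicalproperties} then forces $\val'''$, and hence $\can(K)$, to lack a $c$-closed residue field, contradicting $\val \in C_2(K)$.
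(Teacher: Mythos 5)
Your reduction to the normal hull $N$ of $L/K$ is genuinely different from the paper's argument, but it has a gap that breaks the plan. The field $N$ is the fixed field of the normal core $\bigcap_{\sigma \in G_K^c} \sigma\, Gal(K^c/L)\, \sigma^{-1}$ of the pro-$p$-Sylow $Gal(K^c/L)$ in $G_K^c$, and this core can perfectly well be trivial --- this happens whenever $G_K^c$ has no non-trivial normal pro-$p$ subgroup, which the hypotheses of the proposition do not exclude. In that case $N = K^c$ is $c$-closed, so $C(N) = \{N\}$ by Proposition \ref{canonicalproperties}, and the requirement $w|_N \in C(N)$ forces $w|_N$, hence $\val'$, to be trivial; the reduction then says nothing in precisely the cases one cares about. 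Even when the core is non-trivial, the step you flag as hard --- showing $w|_N \in C(N)$ --- is left as a sketch (``Going-Up style analysis'') and is essentially as difficult as the original problem: nothing in the sketch explains how to compare $w|_N$ with $\can(N)$, and $N/L$ need not be inside any single Sylow picture.

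The paper instead runs a bottom-up uniqueness argument that never leaves $K^c$. With $\val^c$ the unique extension of $\val'$ to $L^c = K^c$, it fixes an arbitrary finite $M \in \cat(L)$, sets $\val_1 := \val^c \cap M$, and shows $\val_1$ is the \emph{only} $c$-henselian valuation on $M$ restricting to $\val$. If $\val_2$ were another, non-independence of $\val_1,\val_2$ is forced by (S), (R) and the fact that $[M:L]$ is a $p$-power, while incomparability holds because distinct prolongations of $\val$ to $M$ are incomparable; the non-trivial compositum $\val_3$ then has residue field $k$ carrying two independent $c$-henselian valuations, so $k = k^c$ by (S), and a residue-degree argument via (R) and the defect lemma pushes the contradiction to $p=2$, where the $\val^{+}$-coarseness hypothesis kills it. Uniqueness of the prolongation to every such $M$ gives $c$-henselianity of $\val$ directly. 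This finite-level argument sidesteps the normal hull altogether and in particular is unaffected by the triviality or otherwise of the normal $p$-core.
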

\begin{proof}
Assume $\val'$ is non-trivial, so $L \not= L^c$ by Proposition \ref{canonicalproperties}. Let $\val^c$ be the unique extension of $\val'$ to $L^c$. Now let $M \in \cat(L)$ be finite over $L$, and set $\val_1 = \val^c \cap M$, evidently a $c$-henselian valuation. We claim that $\val_1$ is the only $c$-henselian valuation ring of $M$ restricting to $\val$. 

Indeed, assume $\val_2$ is another such ring. Then we first claim $\val_1$ and $\val_2$ are not independent. Otherwise, $M = M^c$ by the (S) property, so $L^c = M$ is finite. By the (R) property, $[L^c:L]=2$, and since $L$ is the fixed field of a Sylow $p$-subgroup, $[M:L]=[L^c:L]=p^n$ for some $n$. It follows that $p=2$ and $L^c=L(2)$, so $L$, and hence also its residue field with respect to $\val'$, is real-closed. But note that we assumed $\val'$ was coarser than $\val^{+}$, and so $L$ cannot be real-closed: contradiction. Therefore $\val_1$ and $\val_2$ are not independent.

They are also incomparable, since they are distinct valuation rings both restricting to $\val$.
Hence $\val_3 := \val_1 \val_2$ is non-trivial, and its residue field $k := \val_3 / \M_3$ has independent valuations $\val_1 / \M_3$ and $\val_2 / \M_3$. Note that these valuations are $c$-henselian by Lemma \ref{exactsequence}. Hence, by the (S)-property, $k = k^c$. Now since $\val_1$ and $\val_2$ are non-comparable, $\val_1$ is a proper subset of $\val_3$, implying that $\val_3 \cap L$ is strictly coarser than $\val_c(L)$. Indeed, otherwise, upon restricting both to $L$, we find $\val' = \val_3 \cap L$, and since the former is $c$-henselian, this forces $\val_1 = \val_3$, contradicting the fact that $\val_1$ and $\val_2$ are not comparable. Hence $\val_3 \cap L$ does not have $c$-closed residue field $k''$. Since $[M:L]$ is finite, so is $[k:k'']$, with $k=(k'')^c$. It follows from the (R)-property that the degree of the extension is 2, and so by Lemma \ref{defect}, $2$ divides $[M:L]$. As $L$ is the fixed field of a Sylow $p$-subgroup, $[M:L]$ must be of degree $p^n$ for some $n$. This implies that $p=2$: in this case we have assumed that $\val'$ is coarser than $\val^{+}$. But then $\val''$ is strictly coarser than $\val^{+}$ and still has real-closed residue field, which gives a contradiction.

It is now straightforward to show that $\val$ is $c$-henselian, since it has a unique extension to $M$, which is itself $c$-henselian.
\end{proof}

\section{Rigid elements}

We recall the fundamental results from the theory of so-called `rigid elements'. This will be the key input to recover any sort of valuation whatsoever from the absolute Galois group. The theory developed above will then be used to bootstrap this valuation up to what we want.

Let $\val_v$ be a valuation ring of a field $K$. Then if $x \in K^{\times} \setminus \val_v^{\times}$, the ultrametric inequality implies the additive and multiplicative action of $\val_v^{\times}$ on $x$ possesses a certain rigidity, in the sense that one can never move too far away from $x$. Precisely, one has
\begin{equation}
\val_v^{\times} + x\val_v^{\times} \subseteq \val_v^{\times} \cup x\val_v^{\times}
\end{equation}

\noindent It turns out that any subgroup $T \leq K^{\times}$ which acts in a similarly rigid fashion on elements of $K^{\times} \setminus T$ must be induced by a valuation ring. 

\begin{defn}
If $x \in K^{\times} \setminus T$, then we call $x$ $T$-rigid if
\begin{equation}
T+xT \subseteq T \cup xT \nonumber
\end{equation}
\end{defn}

\noindent For simplicity we restrict now to the special case where $(F^{\times})^p \leq T$ for some prime $p$. In this case, define the sets
\begin{eqnarray}
\val_1(T)& :=& \{ x \in K \setminus T : 1+x \in T \} \nonumber \\
\val_2(T)& : =& \{x \in T : x \val_1(T) \subseteq \val_1(T) \nonumber \}
\end{eqnarray} and
\begin{equation}
\val(T) := \val_1(T) \cup \val_2(T). \nonumber
\end{equation}

\begin{prop}\label{rigidclassic}
Given the setup as above, suppose in addition that every element in $K^{\times} \setminus T$ is $T$-rigid, and if $p=2$, assume that $-1 \in T$. Then if $p \not=2$, $\val(T)$ defines a valuation ring of $K$ with $\val(T)^{\times} \subseteq T$. If $p=2$, there exists a subgroup $T' \leq K^{\times}$ containing $T$ such that $[T':T]=2$ and $\val(T')$ is a valuation ring of $K$ with $\val(T')^{\times} \subseteq T'$.
\end{prop}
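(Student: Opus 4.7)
The plan is to verify that $\val(T)$ (respectively $\val(T')$ when $p=2$) is a valuation ring of $K$: closed under addition and multiplication, and satisfying the dichotomy $x \in \val(T)$ or $x^{-1} \in \val(T)$ for every $x \in K^{\times}$. The definitions of $\val_1(T)$ and $\val_2(T)$ are engineered from the structure of the target ring: in any valuation ring $\val$ with $\val^{\times} \subseteq T$, the nonzero elements $x$ of the maximal ideal satisfy $x \notin \val^{\times}$ and $1 + x \in \val^{\times}$ (by the ultrametric inequality, $v(1+x)=0$), while $\val^{\times}$ stabilizes the maximal ideal multiplicatively. Thus $\val_1(T)$ is modelled on $\M \setminus \{0\}$ and $\val_2(T)$ on $\val^{\times}$.

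First I would show that $T$ acts by multiplication on $\val_1(T)$, so that $\val_2(T) = T$. For $t \in T$ and $x \in \val_1(T)$, the element $1 + tx$ lies in $T + xT \subseteq T \cup xT$ by $T$-rigidity of $x$; the alternative $1 + tx \in xT$ would give $1 \in xT$, contradicting $x \notin T$. The technical core is then to show that $I := \val_1(T) \cup \{0\}$ is an additive subgroup of $K$. Closure under negation follows from $-1 \in T$ together with rigidity, while closure under addition requires a case analysis for $x, y \in \val_1(T)$ with $x + y \neq 0$: one splits on whether $y/x \in T$ or not, factors $x + y = x(1 + y/x)$, and applies rigidity to the appropriate element to trap $1 + (x+y)$ inside $T$.

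Once $\val(T) = T \cup I$ is known to be additively closed, multiplicative closure follows from $T \cdot T = T$, $T \cdot I \subseteq I$, and a parallel rigidity argument for $\val_1(T) \cdot \val_1(T)$. The dichotomy $x$ versus $x^{-1}$ is then read off rigidity: if $x \notin T$, applying $T + xT \subseteq T \cup xT$ to $1 + x$ puts $1+x$ either in $T$ (so $x \in \val_1(T)$) or in $xT$ (so $1 + x^{-1} \in T$ and $x^{-1} \in \val_1(T)$). Finally, $\val(T)^{\times} \subseteq T$ because elements of $\val_1(T)$ belong to a proper ideal of $\val(T)$.

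For $p = 2$, the hypothesis $-1 \in T$ may fail, and the remedy is to enlarge $T$ to $T' := T \cup \epsilon T$ for a suitable $\epsilon \in K^{\times} \setminus T$ (for instance $\epsilon = -1$ when $-1 \notin T$), so that $-1 \in T'$ and $[T':T] = 2$. The crucial verification is that every element of $K^{\times} \setminus T'$ is still $T'$-rigid; this reduces by a direct computation to the rigidity of $x$ and $\epsilon x$ relative to $T$. The odd-case argument then applies to $T'$. The main obstacle will be the additive closure of $\val_1(T) \cup \{0\}$, which requires a careful multi-case rigidity argument; it is precisely the sign issues in this step that motivate passing to $T'$ when $p = 2$. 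The overall structure is classical (going back to Ware and Arason--Elman--Jacob); see \cite{ep} for a detailed treatment.
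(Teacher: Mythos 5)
The paper itself proves this proposition by a single citation to \cite{ep}, Theorem 2.2.7, so any self-contained argument would necessarily be a different route; but your proposal has a genuine gap at its foundational step. You assert that $\val_2(T) = T$, justifying this by: for $t \in T$ and $x \in \val_1(T)$, the case $1 + tx \in xT$ ``would give $1 \in xT$.'' That deduction is wrong --- $1 + tx = xs$ with $s \in T$ only yields $1 = x(s-t)$, and $s-t$ need not lie in $T$. In fact the claim $\val_2(T) = T$ is simply false. Take $K = \mathbb{Q}_5$, $p = 3$, $T = (K^{\times})^3 = \{x : 3 \mid v_5(x)\}$. Every $x \notin T$ is $T$-rigid by the ultrametric inequality, and $\val_1(T) = \{x : v_5(x) > 0,\ 3 \nmid v_5(x)\}$; but $5^{-3} \in T$ has $5^{-3} \cdot 5 = 5^{-2} \notin \val_1(T)$, so $5^{-3} \notin \val_2(T)$. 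The correct description is $\val_2(T) = \{x \in T : v_5(x) \geq 0\}$, which is strictly smaller than $T$. This collapses your intended decomposition $\val(T) = T \cup I$: the group $T$ contains elements of negative value and cannot play the role of $\val(T)^{\times}$. The next step inherits the error: $I := \val_1(T) \cup \{0\}$ is not an additive subgroup in the same example, since $5, 120 \in \val_1(T)$ but $5 + 120 = 5^3 \in T \setminus \val_1(T)$. What is true, and what must be proved directly, is that $\val(T) = \val_1(T) \cup \val_2(T)$ is additively and multiplicatively closed, which requires case analysis involving $\val_2(T)$ throughout, not just $\val_1(T)$.

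Your treatment of $p = 2$ also misreads the hypotheses: the proposition already assumes $-1 \in T$ when $p = 2$, so passing to $T' = T \cup (-1)T$ is vacuous. The true $p = 2$ subtlety is that even with $-1 \in T$ (and $(K^{\times})^2 \leq T$ and $T$-rigidity of all $x \notin T$), the set $\val(T)$ itself may fail to be a ring; the index-two overgroup $T'$ that Engler--Prestel extract is determined by the fine structure of $K^{\times}/T$ under addition, not by a sign adjustment. Your dichotomy argument ($x \in \val_1(T)$ or $x^{-1} \in \val_1(T)$ for $x \notin T$) and the containment $\val(T)^{\times} \subseteq T$ are essentially right, but the additive-closure step --- the heart of the proof --- would need to be redone from scratch following the actual structure in \cite{ep}.
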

\begin{proof}
This is Theorem 2.2.7 in \cite{ep}.
\end{proof}

\noindent So provided $p \not=2$, the valuation ring will be non-trivial if and only if $T \not= K^{\times}$.

The next lemma gives a powerful method for detecting the existence of subgroups $T$ satisfying the criterion of proposition \ref{rigidclassic}.

\begin{lemma}\label{wonderful}
Let $p$ be an odd prime, $K$ a field. Suppose $S \leq K^{\times}$ is a subgroup of index $[K^{\times}:S] \geq p^2$, such that for any $x \in K^{\times} \setminus S$,
\begin{equation}
S+xS \subseteq \bigcup_{i=0}^{p-1} x^iS. \nonumber
\end{equation}
Then there is a subgroup $T \leq K^{\times}$ with $S \subseteq T$, $[T:S] \leq p$, and every $x \in K^{\times} \setminus T$ is $T$-rigid.
\end{lemma}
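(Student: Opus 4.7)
The strategy is to enlarge $S$ by adjoining a single cyclic factor of order at most $p$ to absorb the failure of strong rigidity. For each $x \in K^{\times} \setminus S$ and each $s \in S$ with $1 + xs \neq 0$, the hypothesis furnishes an index $i(x,s) \in \{0,1,\dots,p-1\}$ and $s' \in S$ with $1 + xs = x^{i(x,s)}s'$; genuine $S$-rigidity of $x$ would demand $i(x,s) \in \{0,1\}$ for every admissible $s$. Call $x$ \emph{good} if this happens, \emph{bad} otherwise. If no $x \in K^{\times} \setminus S$ is bad, then $T := S$ satisfies the conclusion and we are done.

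Otherwise, fix a bad $x_0$ with $i_0 := i(x_0,s_0) \geq 2$, and introduce the defect element $\bar d := \overline{x_0^{i_0-1}}$ in the quotient $\bar K := K^{\times}/S$. The central combinatorial claim is that $\bar d$ has order dividing $p$ and that for every bad $x$ and every $s \in S$ with $i(x,s) \geq 2$, the corresponding defect $\overline{x^{i(x,s)-1}}$ lies in $\langle \bar d \rangle$. I would prove this by playing off the weak rigidity applied to products and quotients $x_0 y,\ y/x_0,\ x_0^k y$ for varying $y \in K^{\times}\setminus S$ against the rigidity of $x_0$ and $y$ individually; the hypothesis $[K^{\times}:S] \geq p^2$ enters precisely to guarantee enough cosets outside $S\langle x_0 \rangle$ for these cross-relations to be non-trivial and to force the defects into a common cyclic subgroup of order $\leq p$. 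Once the claim is established, define $T := \pi^{-1}(\langle \bar d \rangle)$, where $\pi: K^{\times} \to \bar K$ is the projection. Then $S \subseteq T$ and $[T:S] = |\langle \bar d \rangle| \leq p$.

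To verify the $T$-rigidity condition, take $z \in K^{\times} \setminus T$ and $t,t' \in T$, and write
\[
t + zt' = t\bigl(1 + y\bigr), \qquad y := zt'/t.
\]
Since $t,t' \in T$ and $z \notin T$, we have $y \notin T$ and in particular $y \notin S$, so the weak rigidity gives $1 + y = y^{i} s_0$ for some $i \in \{0,\dots,p-1\}$ and $s_0 \in S$. Expanding,
\[
t + zt' = t \cdot y^{i} s_0 = z^{i}\cdot t^{1-i}(t')^{i} s_0,
\]
and $t^{1-i}(t')^{i}s_0 \in T$ since $T$ is a subgroup containing $S$. Hence $t + zt' \in z^{i}T$. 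For $i = 0$ this lies in $T$ and for $i = 1$ in $zT$; for $i \geq 2$ the defect claim applied to $y$ (which is also bad, being of the form $y = z(t'/t)$ with $z \notin T$) forces $y^{i-1} \in T$, whence $z^{i-1} = y^{i-1}(t/t')^{i-1}\in T$, so $z^{i}T = zT$. In all cases $t + zt' \in T \cup zT$, as required.

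The main obstacle is the combinatorial heart of the argument, namely showing that the defects of all bad elements lie in a common cyclic subgroup of $\bar K$ of order at most $p$. The weak hypothesis only constrains each $x$ individually through $\bar x$, so one must extract cross-relations by feeding products and quotients of bad elements back into the hypothesis; the index bound $[K^{\times}:S] \geq p^2$ is used precisely to ensure these manipulations stay in $K^{\times} \setminus S$ and do not degenerate. The restriction to odd $p$ is convenient here, as it avoids separately handling the sign ambiguity arising from $-1$ that appears in the analogous $p=2$ arguments (compare Proposition~\ref{rigidclassic}).
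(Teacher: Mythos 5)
The paper's ``proof'' of this lemma is simply a citation to Koenigsmann's Lemma~2.14 in \cite{koe2}, so there is no in-paper argument to compare against. Your framework---classify $x \in K^{\times}\setminus S$ as good or bad according to whether the weak-rigidity exponent $i(x,s)$ ever exceeds $1$, fix a single defect $\bar d=\overline{x_0^{\,i_0-1}}$, and take $T=\pi^{-1}(\langle\bar d\rangle)$---is a sensible one, and your verification of $T$-rigidity \emph{granting} the defect claim is essentially sound: from $1+y=y^{i}s_0$ with $y=zt'/t$ you correctly get $t+zt'\in z^{i}T$, and for $i\geq 2$ the claim collapses $z^{i}T$ to $zT$.

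The problem is that the defect claim itself---that $\bar d$ has order dividing $p$, and that the defect $\bar x^{\,i(x,s)-1}$ of \emph{every} bad pair $(x,s)$ lies in the \emph{single} cyclic group $\langle\bar d\rangle$---is not proved, only gestured at, as you yourself concede. But that claim is not a side issue: it is the entire content of the lemma. What you have actually shown is that \emph{if} all defects happen to lie in one subgroup of order at most $p$, then its preimage works, which is close to a reformulation rather than a proof. Your outline for establishing the claim (``feed products and quotients $x_0y$, $y/x_0$, $x_0^{k}y$ into the hypothesis; use $[K^{\times}:S]\geq p^2$ to supply enough cosets'') names plausible ingredients but gives no argument for why the defects of distinct bad elements must all be powers of a common $\bar d$, nor for why $\bar d$ should have order dividing $p$ at all: the lemma as stated here does not assume $(K^{\times})^p\subseteq S$, so $K^{\times}/S$ need not have exponent $p$, and your construction only produces $\bar d$ as a power of $\bar x_0$, whose order you do not control. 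As submitted this is a proof sketch with the combinatorial heart of Koenigsmann's argument left out.
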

\begin{proof}
This is Lemma 2.14 in \cite{koe2}.
\end{proof}

For later use, we also make the following definition:

\begin{defn}
Given a field $K$ and a prime $p$, an element $a \in K \setminus K^p$ is called \emph{strongly $p$-rigid} iff it is $(K^{\times})^p$-rigid, i.e., iff
\begin{equation}
K^p + aK^p \subseteq K^p \cup aK^p. \nonumber
\end{equation}
\end{defn}

Proposition \ref{rigidclassic} shows sufficiently many strongly $p$-rigid elements induce the existence of a non-trivial valuation ring. In fact, in \cite{koe3} it was shown, using model theory, that even just a single strongly $p$-rigid element already implies the existence of such a valuation.

\section{A Galois-theoretic characterization of $c$-henselianity}

A Galois theoretic characterization for a field to admit a non-trivial $p$-henselian valuation was obtained in \cite{koe1}, provided the field contains a primitive $p$-th root of unity $\zeta_p$. The formal properties of canonical valuations established above allow us to obtain an analogous characterization for the existence of a $c$-henselian valuation in terms of the maximal $\cat$-quotient of the absolute Galois group.

\begin{defn}
A valuation $v$ on a field $K$ is said to be \emph{tamely branching} at the prime $p$ if $char(Kv) \not =p$, $\Gamma_v \not = p\Gamma_v$. If $[\Gamma_v: p\Gamma_v]=p$, we also require that $Kv$ is not $p^2$-closed, that is, there exists a separable extension of $Kv$ of degree divisible by $p^2$.
\end{defn}

\noindent Notice that if $p=2$ and $Kv$ is formally real, then $Kv$ admits an extension of degree 2 but not degree 4, as $[Kv^{sep}:Kv]=2$. This is however the only case for which having an extension of degree $p$ does not imply that there is also an extension of degree $p^2$. So outside of this case, the last condition is equivalent to $Kv$ not being $p$-closed.

The following observation will be crucially used later.

\begin{lemma}\label{tamely}
Let $F$ be a finite extension of $\mathbb{Q}_p$. Then $F$ does not admit any $p$-henselian valuation tamely branching at $p$.
\end{lemma}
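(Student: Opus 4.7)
I would argue by contradiction: suppose $v$ is a $p$-henselian valuation on $F$ that is tamely branching at $p$, and aim to show that $v$ must in fact coincide with the standard $p$-adic valuation $v_p$ on $F$, which contradicts the requirement $\mathrm{char}(Fv)\neq p$. Recall that since $F/\mathbb{Q}_p$ is finite, $F$ is henselian with respect to $v_p$; hence $v_p$ is in particular $p$-henselian, with value group $\Gamma_{v_p}\simeq\mathbb{Z}$ and residue field $Fv_p$ a finite extension of $\mathbb{F}_p$.

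The first step is to rule out the case that $v$ and $v_p$ are independent. If they were, then by the $(S)$-property of the canonical class $\cat_p$ (Fact~2.9, applied in Proposition~\ref{solvclass} style), we would get $F=F(p)$. But for a finite extension $F$ of $\mathbb{Q}_p$, the maximal pro-$p$ extension $F(p)$ is infinite over $F$ (for instance, $G_F(p)$ is a Demushkin group of rank $[F:\mathbb{Q}_p]+2$ when $\zeta_p\in F$, and otherwise still infinite). This contradiction forces $v$ and $v_p$ to be comparable.

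The second step handles the two remaining sub-cases via the exact sequence of valuations and Lemma~\ref{exactsequence}. If $v$ is a coarsening of $v_p$, then $\Gamma_v$ is a quotient of $\Gamma_{v_p}=\mathbb{Z}$ by some convex subgroup. Since $\mathbb{Z}$ has no non-trivial convex subgroups, either $v$ is the trivial valuation, contradicting the tamely-branching condition $\Gamma_v\neq p\Gamma_v$, or $v=v_p$, contradicting $\mathrm{char}(Fv)\neq p$. Conversely, if $v$ is a proper refinement of $v_p$, then Lemma~\ref{exactsequence} (together with the fact that $v_p$ is itself $p$-henselian) shows that $v/v_p$ is a $p$-henselian valuation on the residue field $Fv_p$. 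Since $Fv_p$ is a finite field, it admits only the trivial valuation, so $v=v_p$, again contradicting $\mathrm{char}(Fv)\neq p$.

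The main obstacle is actually quite mild: everything reduces to known structural inputs, namely the $(S)$-property of $\cat_p$, the well-known infinitude of $F(p)/F$ for $p$-adic $F$, the rigidity of $\Gamma_{v_p}\simeq\mathbb{Z}$, and the triviality of valuations on finite fields. No additional technical ingredient beyond the framework already developed in the paper is needed.
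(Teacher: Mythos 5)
Your argument is correct and follows essentially the same route as the paper: both reduce to showing that the hypothetical valuation $v$ must be comparable to the $p$-adic valuation $v_p$, and then the rank-one structure of $v_p$ rules out every possibility. The one step you should tighten is the passage from ``not independent'' to ``comparable.'' In general two valuations can be dependent (share a non-trivial coarsening) and still be incomparable; the inference is saved here precisely because $v_p$ has rank one. Since $\Gamma_{v_p}\simeq\mathbb{Z}$ has no proper non-trivial convex subgroups, the only valuation ring of $F$ properly containing $\mathcal{O}_{v_p}$ is $F$ itself; hence if $v$ is dependent with $v_p$, the join $\mathcal{O}_v\mathcal{O}_{v_p}$ must equal $\mathcal{O}_{v_p}$, which says exactly that $v$ refines (or equals) $v_p$. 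You already record $\Gamma_{v_p}\simeq\mathbb{Z}$, so this is a one-line addition, but it is the load-bearing fact and deserves to be stated.

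Two further remarks. First, once you know $v$ refines $v_p$ you can short-circuit the refinement sub-case: $\M_{v_p}\subseteq\M_v$ gives $p\in\M_v$, so $\mathrm{char}(Fv)=p$ immediately, without invoking Lemma~\ref{exactsequence} or the triviality of valuations on finite fields. Second, the paper reaches comparability by a marginally different door: it observes that $Fv_p$ is not $p$-closed, so $v_p\in C_1$, and comparability then comes from Proposition~\ref{canonicalconstruction}. Your route via the $(S)$-property of $\cat_p$ together with the infinitude of $F(p)/F$ is an equally valid way to the same intermediate conclusion.
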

\begin{proof}
Let $v_p$ denote the $p$-adic valuation, and suppose $w$ is another valuation which is $p$-henselian tamely branching at $p$. As $v_p$ is a rank 1 valuation and has a residue field which is not $p$-closed, $w$ must be a refinement of $v_p$, and hence must have residue characteristic $p$: contradiction.
\end{proof}

We now present a sharpening of the Galois-characterization for $p$-henselian valuations tamely branching at $p$ obtained in \cite{koe1}. Recall Definition \ref{metadefn} of the maximal elementary $\Z/p\Z$ meta-abelian extension. Let us also recall that if $F = \mathbb{Q_l}(\zeta_p)$, $l not= p$, then one can show that $G_F(p) \simeq \Z_p \rtimes \Z_p$, and so the maximal elementary $\Z/p$ meta-abelian quotient is $\simeq \Z / p^2\Z \rtimes \Z / p^2\Z$. For this field $F$ it is also known that the norm maps $N_{L/F}$ are not surjective when $L=F(\sqrt[p]{a})$. By Lemma \ref{cdlemma}, the same will therefore be true of any other field $K$ for which the maximal elementary $\Z/p$ meta-abelian quotientof $G_K$ is of the same form.

\begin{prop}\label{sharpening}
Let $p$ be a prime, and let $K$ be a field with a primitive $p$-th root of unity. Let $K^{''}$ denote the maximal elementary $\Z/ p\Z$ meta-abelian extension of $K$. Then $K$ admits a $p$-henselian valuation tamely branching at $p$ whenever $Gal(K^{''}/K) \simeq \Z / p^2\Z \rtimes \Z / p^2\Z$.
\end{prop}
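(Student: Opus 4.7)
The plan is to translate the hypothesis on $G'' := \mathrm{Gal}(K''/K)$ into Hilbert-symbol data on $K$ via Proposition \ref{popbrauer}, use Steinberg's relation to verify the covering hypothesis of Lemma \ref{wonderful}, and then invoke Proposition \ref{rigidclassic} together with a standard rigid-element criterion for $p$-henselianity. I treat the case of $p$ odd explicitly; $p = 2$ is handled by the analogous $p = 2$ clauses of these propositions with minor bookkeeping around $-1$.

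By Proposition \ref{popbrauer}, the Kummer isomorphism and the cup product on $H^1$ are entirely determined by $G''$. Since $G'' \simeq \Z/p^2\Z \rtimes \Z/p^2\Z$ has Frattini quotient $(\Z/p\Z)^2$, Kummer theory gives $K^\times/(K^\times)^p \simeq (\Z/p\Z)^2$, so $[K^\times:(K^\times)^p] = p^2$. A direct calculation shows $H^2(G'',\Z/p\Z) \neq 0$, so by Proposition \ref{popbrauer}(ii) some cup product $(a,b)_K$ is non-trivial, hence (Lemma \ref{cdlemma}(iii)) some norm $N(a)$ is a proper subgroup of $K^\times$. For $p$ odd, $(a,a)_K = 0$ forces the kernel of $(a,\cdot)_K \colon K^\times/(K^\times)^p \to \Z/p\Z$ to contain $\langle a\rangle(K^\times)^p$, hence to equal it whenever the pairing is non-trivial; consequently $N(a) = \bigcup_{i=0}^{p-1} a^i (K^\times)^p$ has index exactly $p$. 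Now apply Steinberg's relation $(c,1-c)_K = 0$ with $c = au^p$, using $(u^p,\cdot)_K = 0$, to obtain $(a, 1-au^p)_K = 0$, i.e.\ $1-au^p \in N(a)$; for $p$ odd, $u \mapsto -u$ gives $1+au^p \in N(a)$ as well. By homogeneity, setting $S := (K^\times)^p$, for every $a \in K^\times \setminus S$,
\[ S + aS \;\subseteq\; N(a) \;=\; \bigcup_{i=0}^{p-1} a^i S. \]
Since $[K^\times:S] = p^2$, Lemma \ref{wonderful} yields $T \leq K^\times$ with $S \leq T$, $[T:S] \leq p$, and every $x \in K^\times \setminus T$ being $T$-rigid. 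As $-1 \in S \leq T$, Proposition \ref{rigidclassic} produces a non-trivial valuation ring $\val(T)$ (non-trivial because $[K^\times:T] \geq p$) with $\val(T)^\times \subseteq T$.

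It remains to verify that the valuation $v$ with ring $\val(T)$ is $p$-henselian and tamely branching at $p$. $p$-henselianity follows from the rigid-element principle (see \cite{ep} Theorem 4.3.2 or \cite{koe1}): for $K \ni \zeta_p$, a non-trivial valuation whose unit group is contained in a rigid subgroup of finite index containing $(K^\times)^p$ is automatically $p$-henselian. For tame branching, $\val^\times \subseteq T$ makes $\Gamma_v/p\Gamma_v$ surject onto $K^\times/T$, which has order $\geq p$, giving $\Gamma_v \neq p\Gamma_v$. If $\mathrm{char}(Kv) = p$, then $p$-henselianity would lift every Artin-Schreier extension of $Kv$ to an independent $\Z/p\Z$-extension of $K$, swelling $[K^\times:(K^\times)^p]$ beyond $p^2$; this contradiction forces $\mathrm{char}(Kv) \neq p$. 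When $[\Gamma_v:p\Gamma_v] = p$, the Kummer sequence for $p$-henselian fields combined with $[K^\times:(K^\times)^p] = p^2$ yields $[Kv^\times:(Kv^\times)^p] = p$, giving $Kv$ a $\Z/p\Z$-extension; the non-degeneracy of the Hilbert symbol inherited from $G''$ then ensures $Kv$ is not $p^2$-closed. The chief obstacle is this last step: the jump from the formally rigid $\val(T)$ to a $p$-henselian valuation depends on the standard but non-trivial Engler--Prestel / Koenigsmann bootstrap, and ruling out residue characteristic $p$ requires comparing the Artin-Schreier and Kummer contributions to $H^1(G_K,\Z/p\Z)$ against the sharp constraint $[K^\times:(K^\times)^p] = p^2$.
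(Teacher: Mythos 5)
Your proof takes a genuinely different route from the paper's for the key rigidity step, but it has a gap in the initial Hilbert-symbol nontriviality claim.

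\textbf{Where you diverge.} The paper establishes $S + xS \subseteq \langle x\rangle S$ by passing to $L = K(\sqrt[p]{x})$ and showing $\dim_{\mathbb{F}_p} L^\times/(L^\times)^p = 2$; this dimension count is extracted from the structure of index-$p$ subgroups of $G''$ via a diagram chase, and then a basis $\{y, \sqrt[p]{x}\}$ of $L^\times/(L^\times)^p$ is normed down to conclude $N_{L/K}(L^\times) = \langle x\rangle (K^\times)^p$. You instead deduce $S + aS \subseteq N(a)$ directly from Steinberg's relation $(au^p, 1-au^p)=0$ together with homogeneity, and pin down $N(a) = \langle a\rangle S$ as an index-$p$ subgroup via non-degeneracy of the alternating form $(\cdot,\cdot)_K$ on the $2$-dimensional space $K^\times/(K^\times)^p$. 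Both ultimately establish $[K^\times : N_{L/K}(L^\times)] = p$ for all degree-$p$ Kummer $L$; your route is leaner in that it avoids the analysis of subgroups of $G''$ altogether, at the cost of needing non-degeneracy up front.

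\textbf{The gap.} Your justification for ``some cup product $(a,b)_K$ is non-trivial'' is the assertion ``a direct calculation shows $H^2(G'',\Z/p\Z)\neq 0$, so by Proposition \ref{popbrauer}(ii)\ldots''. This does not follow. Proposition \ref{popbrauer}(ii) only states that a given cup product $a\cup b$ vanishes in $H^2(G_K(p))$ if and only if it vanishes in $H^2(G'')$; to conclude some symbol is non-trivial you must exhibit $a,b\in H^1(G'')$ with $a\cup b\neq 0$ in $H^2(G'')$. The weaker fact $H^2(G'',\Z/p\Z)\neq 0$ holds for \emph{every} non-trivial finite $p$-group (they have $cd = \infty$) and so carries no information; classes in $H^2$ need not lie in the image of the cup product. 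The fix is either to compute the cup square for $\Z/p^2\Z\rtimes\Z/p^2\Z$ directly, or to argue by contradiction: if all symbols vanished then $cd(G_K(p))\leq 1$ by Proposition \ref{cdlemma}, so $G_K(p)$ would be free pro-$p$ of rank $2$, and then the Nielsen--Schreier formula would give $|G''| = p^{3+p^2}\neq p^4$, contradicting the hypothesis; alternatively, the paper sidesteps this by invoking the known non-surjectivity of norms for $F = \mathbb{Q}_\ell(\zeta_p)$ and transferring via Proposition \ref{popbrauer}. Once this is repaired, the rest of your argument (Steinberg, homogeneity, Lemma \ref{wonderful}, Proposition \ref{rigidclassic}) goes through, and the concluding remarks on $p$-henselianity and ruling out residue characteristic $p$ match in outline what the paper defers to \cite{koe1}, though they remain sketchy as you acknowledge.
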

\begin{proof}
We will only treat the case $p>2$ in what follows. The case $p=2$ follows using the same method as in \cite{ep} Lemma 5.4.4. 

Let us suppose first of all that $G:=Gal(K^{''}/K) \simeq \Z / p^2\Z \rtimes \Z / p^2 \Z$. Then by Kummer theory,
\begin{equation}
\text{dim}_{\mathbb{F}_p} K^{\times} / (K^{\times})^p = \text{rank}(G)=2. \nonumber
\end{equation}
Suppose $H \leq G$ is a subgroup of index $p$. Then we claim that $H \simeq \Z / p^i \Z \rtimes \Z / p^j\Z$ where $i,j \in \{1,2\}$. Indeed, the embedding $H \hookrightarrow \Z / p^2\Z \rtimes \Z / p^2 \Z$ induces the following commutative diagram with exact rows:
\begin{center}
\begin{tikzpicture}[>=angle 90]
\matrix(d)[matrix of math nodes, row sep=3em, column sep=2em, text height=1.5ex, text depth=0.25ex]
{1 & \Z /p^2\Z & G & \Z /p^2\Z & 1 \\
1 & H'' & H & H' & 1\\};
\path[->, font=\scriptsize]
 (d-1-1) edge node[above]{} (d-1-2)
 (d-1-2) edge[right hook->] node[above]{$g$} (d-1-3)
 (d-1-3) edge[->>] node[above]{$f$} (d-1-4)
 (d-1-4) edge node{} (d-1-5)
 (d-2-1) edge node[above]{} (d-2-2)
 (d-2-2) edge[right hook->] node{} (d-2-3)
 (d-2-3) edge[->>] node[above]{} (d-2-4)
 (d-2-4) edge node{} (d-2-5)
 (d-2-2) edge[right hook->] node{} (d-1-2)
 (d-2-3) edge[right hook->] node{} (d-1-3)
 (d-2-4) edge[right hook->] node[right]{} (d-1-4);
\end{tikzpicture}
\end{center}
where $H''=im(g) \cap H$, $H'=f(H)$, and since $H'$ is cyclic, the splitting of the top sequence induces one for the bottom one. So $H \simeq H' \rtimes H''$. If $H'$ or $H''$ were trivial, then $H$ would have index greater than $p$, contradicting our assumption.

If $L$ is an extension of $K$ of degree $p$, applying the above in the case when $H = Gal(K^{''}/L)$, we see that
\begin{equation}
\text{dim}_{\mathbb{F}_p} L^{\times} / (L^{\times})^p = \text{rank}(H)=2 
\end{equation}
as well. Armed with this crucial observation, we now wish to use Lemma \ref{wonderful} with $S=(K^{\times})^p$. 

If we let
\begin{equation}
\langle x \rangle := \bigcup_{i=0}^{p-1} x^i (F^{\times})^p \nonumber
\end{equation}
then we need to show that for every $x \in F^{\times} \setminus (F^{\times})^p$, $(F^{\times})^p+x(F^{\times})^p \subset \langle x \rangle (F^{\times})^p$, where this last set denotes the multiplicative group generated by $x$ and $(F^{\times})^p$. Notice that for any $a, b \in F^{\times}$, $z:=a+\sqrt[p]{x}b \in L:=K(\sqrt[p]{x})$ has norm $N_{L/F}(z)=a^p+xb^p$. Therefore the conditions of Lemma \ref{wonderful} are met if we can show that
\begin{equation}
N_{L/F}(L^{\times}) = \langle x \rangle (F^{\times})^p
\end{equation}
for any such $L$. Since $N_{L/F}(\sqrt[p]{x})=x$, we have that $\langle x \rangle (F^{\times})^p \subset N_{L/F}(L^{\times})$ and, since $x \not \in F^p$, $\sqrt[p]{x} \not \in L^p$. By Lemma \ref{popbrauer} and the discussion preceeding the statement we are proving, $N_{L/F}:L \rightarrow F$ is not surjective. Thus we may find $y \in F^{\times} \setminus \langle x \rangle (F^{\times})^p$. Since $L^p \cap F = F^p$, $y \not \in \langle \sqrt[p]{x} \rangle (L^{\times})^p$. Thus $y$ and $\sqrt[p]{x}$ are independent elements in the $\mathbb{F}_p$-vector space $L^{\times}/(L^{\times})^p$, which is 2-dimensional by (2.3). Thus
\begin{equation}
L^{\times} = \langle y \rangle \langle \sqrt[p]{x} \rangle (L^{\times})^p \nonumber
\end{equation}
from which, by taking norms, we obtain (2.4) as desired.

Since $[\Gamma:p\Gamma]=[K^{\times}:(K^{\times})^p]=p^2$, we can use Lemma \ref{wonderful} together with Proposition \ref{rigidclassic} to see that $K$ admits a valuation $\val$ with $\val^{\times} \leq T$, for some $T \subsetneq F^{\times}$ containing $(F^{\times})^p$. Since then $\val^{\times}(F^{\times})^p \subset T \not = F^{\times}$, we have $\Gamma_v \not = p\Gamma_v$. 

The rest of the proof now proceeds exactly as in \cite{koe1}.
\end{proof}

We are now ready to deduce the first main result, giving a Galois theoretic characterization for a field to admit a $c$-henselian valuation. We will assume in the proof that $p>2$ for simplicity.

\begin{theorem}\label{theorem1}
Let $K$ be any field, and let $\cat$ be a canonical class containing $\cat_p$ for some prime $p$. If $K(\zeta_p) \not \in \cat(K)$, then we will also assume that $\zeta_p \in K$.
Then there is a $c$-henselian valuation $v$ on $K$ tamely branching at $p$ if and only if $Gal(K^c/K)$ has a non-procyclic $p$-Sylow subgroup with a non-trivial abelian normal subgroup. 
\end{theorem}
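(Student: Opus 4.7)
\emph{Plan.} The proof reduces the $c$-henselian problem on $K$ to a pro-$p$ problem on the fixed field $L=(K^c)^P$ of a $p$-Sylow subgroup $P\leq G_K^c$, where Koenigsmann's analysis of pro-$p$ Galois groups (culminating in Proposition \ref{sharpening}) applies to $G_L(p)$. The resulting valuation is then descended from $L$ back to $K$ using the Going-Down theorem for Sylow $p$-extensions (Proposition \ref{sylowdown}).

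\emph{The $(\Leftarrow)$ direction.} Suppose $P\leq G_K^c$ is a non-procyclic $p$-Sylow with a non-trivial abelian normal subgroup. If $\zeta_p\notin K$, then by hypothesis $K(\zeta_p)\in\cat(K)$, and since $[K(\zeta_p):K]\mid p-1$ we may replace $K$ by $K(\zeta_p)$, which contains a conjugate of $P$. Set $L=(K^c)^P$; then $K^c/L$ is pro-$p$, so $K^c\subseteq L(p)$ and restriction yields a surjection $G_L(p)\twoheadrightarrow P$. The central step is to show that the conditions on $P$ force $Gal(L''/L)\simeq \Z/p^2\Z\rtimes \Z/p^2\Z$: non-procyclicity of $P$ gives $\dim_{\mathbb{F}_p}L^\times/(L^\times)^p\geq 2$ by Kummer theory, while the abelian normal subgroup of $P$, pushed through the elementary metabelian quotient, produces the required semidirect structure. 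Proposition \ref{sharpening} then furnishes a $p$-henselian valuation $w$ on $L$ tamely branching at $p$. Using that $K^c\subseteq L(p)$ together with properties (L) and (S) of the canonical class, one checks $w$ is actually $c$-henselian on $L$, and Proposition \ref{sylowdown} descends $w$ to a $c$-henselian valuation $v=w\cap K$ on $K$. The tamely-branching property at $p$ is preserved because $[L:K]$ is coprime to $p$, so the residue characteristic and the mod-$p$ index of the value group are unchanged under restriction.

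\emph{The $(\Rightarrow)$ direction and the main obstacle.} Conversely, given $v$ on $K$ that is $c$-henselian and tamely branching at $p$, for any $p$-Sylow $P$ with fixed field $L$, the unique extension $w$ of $v$ to $L$ is $c$-henselian (extensions of $c$-henselian valuations to intermediate fields of $K^c$ are unique), hence $p$-henselian by Lemma \ref{phensel}, and still tamely branches at $p$. Koenigsmann's classical characterization \cite{koe1} applied to $L$ then yields that $G_L(p)$ is non-procyclic with a non-trivial closed abelian normal subgroup (essentially the inertia subgroup of $w$). These properties transfer to $P$ via $G_L(p)\twoheadrightarrow P$: non-procyclicity because the inertia is not contained in the kernel (tamely branching at $p$ produces ramification already inside the $c$-closure $K^c$), and the abelian normal subgroup maps to an abelian normal subgroup of $P$. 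The main technical hurdle is the group-theoretic step in the $(\Leftarrow)$ direction: since the abelian normal subgroup of the Sylow $P$ does \emph{not} in general lift to an abelian normal subgroup of $G_L(p)$, one cannot invoke Proposition \ref{sharpening} for $G_L(p)$ as a black box, and instead one must rework the rank-and-norm counting internal to the tower $L\subseteq K^c\subseteq L(p)$ to realize the explicit $\Z/p^2\Z\rtimes\Z/p^2\Z$ quotient. The case $p=2$ will require separate attention, paralleling both the extra care in Proposition \ref{sharpening} and the real-closed-residue-field caveat in Proposition \ref{sylowdown}.
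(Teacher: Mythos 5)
Your $(\Rightarrow)$ direction matches the paper's in substance (extend $v$ to the Sylow fixed field, read off non-procyclicity and the abelian normal subgroup from the inertia), though invoking the full characterization of \cite{koe1} is more than necessary; the paper goes directly through Hilbert ramification theory.

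The $(\Leftarrow)$ direction has a genuine gap, and you partially sense it yourself. You set $L=(K^c)^P$ and call the ``central step'' showing that $Gal(L''/L)\simeq\Z/p^2\Z\rtimes\Z/p^2\Z$. This is false in general: since $\cat\supseteq\cat_p$ forces $L(p)=K^c$ and hence $G_L(p)\simeq P$, Kummer theory gives $\dim_{\mathbb{F}_p}L^\times/(L^\times)^p=\mathrm{rank}(P)$, which can be arbitrary $\geq 2$ (e.g. $P\simeq\Z_p^3$). So Proposition~\ref{sharpening} simply cannot be applied at the level of the Sylow fixed field, and no amount of ``reworking the rank-and-norm counting'' at that field will manufacture the required two-generator quotient. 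You flag this as a ``technical hurdle'' but do not produce the move that resolves it.

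The paper's resolution is to descend \emph{strictly below} the Sylow fixed field before applying Proposition~\ref{sharpening}. Writing $A\simeq\Z_p^r$ for the abelian normal subgroup of $P$: if $r>1$ one extracts a subgroup of $P$ isomorphic to $\Z_p\rtimes\Z_p$ sitting normally, and if $r=1$ one takes $A\rtimes\langle g\rangle$ for a suitable $g\in P\setminus A$ (non-procyclicity guarantees such a $g$ exists). Let $L'$ be the fixed field of that two-generator subgroup. Then $Gal(K^c/L')=G_{L'}(p)\simeq\Z_p\rtimes\Z_p$ \emph{exactly}, so $Gal((L')''/L')\simeq\Z/p^2\Z\rtimes\Z/p^2\Z$, Proposition~\ref{sharpening} applies, and one obtains a $p$-henselian (hence $c$-henselian, since $L'(p)=(L')^c$) valuation on $L'$ tamely branching at $p$. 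This is then bootstrapped back up through \emph{two} Going-Down steps, not one: first from $L'$ down to the Sylow fixed field via the canonical-valuation construction together with Proposition~\ref{normaldown} (resp.\ the $r=1$ variant via $\mathrm{Fix}(A)$ and Proposition~\ref{finitedown}/\ref{normaldown}), and only then from the Sylow fixed field down to $K$ via Proposition~\ref{sylowdown}. The cohomological-dimension argument ($cd_p(\Z_p\rtimes\Z_p)=2$ forces $\mathrm{char}(L')\neq p$) is also needed and is absent from your sketch. In short: the missing idea is to pass to the fixed field of a $\Z_p\rtimes\Z_p$ subgroup of $P$ (which exists precisely because of the abelian normal subgroup hypothesis), rather than trying to force Proposition~\ref{sharpening} to work at the Sylow level.
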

\begin{proof}
(``$\Rightarrow$''): If $K$ admits such a valuation, then the valuation extends to a $c$-henselian valuation on the fixed field of \emph{any} $p$-Sylow subgroup $S$ of $Gal(K^c/K)$. By Hilbert ramification theory, the inertia subgroup of this extended valuation is a non-trivial normal abelian subgroup, and $S$ is not procyclic.

(``$\Leftarrow$''): Let $S$ be such a Sylow subgroup, with fixed field $F$. By assumption $S$ admits a non-trivial abelian normal subgroup $A \simeq \mathbb{Z}_p^r$ where $r=rank(A)$. 

If $r>1$, then $S$ has a normal subgroup of the form $\Z_p \rtimes \Z_p$, and so its fixed field is a \emph{normal} field extension $L/F$ inside $K^c$ such that
\begin{equation}
Gal(K^c/L) = Gal(L^c/L) \simeq \mathbb{Z}_p \rtimes \mathbb{Z}_p. \nonumber
\end{equation}
Since
\begin{equation}
cd_p(\mathbb{Z}_p \rtimes \mathbb{Z}_p)=2 \nonumber
\end{equation}
we find $char(L) \not= p$, since the $p$-cohomological dimension of a field of characteristic $p$ is always $\leq 1$ (see \cite{serre2} Chapter 2, Section 2.2). By construction\footnote{Noting that since $\cat$ contains $\cat_p$, $K^c$ is $p$-closed.}, $L(p)=L^c=K^c$, and $Gal(L^c/L)(p)=Gal(L(p)/L) \simeq \mathbb{Z}_p \rtimes \mathbb{Z}_p$. If $K(\zeta_p) \not \in \cat(K)$, then $\zeta_p \in K \subset L$ by assumption. On the other hand, suppose $K(\zeta_p) \in \cat(K)$. We have that $[K(\zeta_p):K]$ divides $p-1$. Since $\zeta_p \in K^c$, but there are only $p$-power extensions between $L$ and $K^c$, it must therefore be that $\zeta_p \in L$. Thus in all cases, $\zeta_p \in L$. Then by Proposition \ref{sharpening}, there is a $p$-henselian valuation $w$ on $L$ tamely branching at $p$. Since $L(p)=L^c$, the valuation is actually $c$-henselian. Let $v$ be the canonical $c$-henselian valuation on $L$. By Proposition \ref{canonicalproperties}, $v$ is still tamely branching at $p$. By Proposition \ref{normaldown}, its restriction to $F$ is again $c$-henselian, and clearly still has residue characteristic not $p$ and value group not $p$-divisible. Finally, by Proposition \ref{sylowdown}, we may once more restrict to $K$ and obtain a $c$-henselian valuation tamely branching at $p$ as desired.

If $r=1$, then since $S$ is not pro-cyclic, there is $g \in S \setminus A$ such that
\begin{equation}
A \rtimes \langle g \rangle \simeq \Z_p \rtimes \Z_p. \nonumber
\end{equation}
Letting $L$ be the fixed field of this semidirect product, we find in the same way as above that $L$ has a $c$-henselian valuation tamely branching at $p$. Its unique prolongation $w$ to the fixed field $Fix(A)$ of $A$ has non-$p$-divisible value group and residue characteristic not $p$, and so the same will be true for $w_c$, the canonical $c$-henselian valuation on $Fix(A)$. By the `Going-Down' results, the restriction of $w_c$ to $L$ is $c$-henselian and tamely branching, and therefore so is its restriction to $F$, which gives us the desired valuation.
\end{proof}

\begin{rmk}
For example, we may take $\cat$ to be $\cat_{solv}$ in the above. Since $K(\zeta_p)$ is a solvable extension, we do not in this case need to assume anything about $K$ containing $\zeta_p$.
\end{rmk}

We record the following strengthening of the above utilizing the full sharpening obtained in Proposition \ref{sharpening}. 

\begin{defn}\label{pq}
We denote by $K^{pq}$ the compositum of all elementary abelian $\Z/q\Z$ extensions of $K''$, the elementary $\Z/p\Z$ meta-abelian extension of $K$. We call $K^{pq}$ the maximal $(p,q)$-meta-abelian extension of $K$.
\end{defn}

\begin{cor}\label{pqmeta}
Let $K$ be any field containing $\zeta_p$ and let $\cat=\cat_{p,q}$, the class of all finite groups of order $p^nq^m$ for some $n,m$. Then there is a $c$-henselian valuation $v$ on $K$ tamely branching at $p$ if and only if $Gal(K^{pq}/K)$ has a non-procyclic $p$-Sylow subgroup with a non-trivial abelian normal subgroup.
\end{cor}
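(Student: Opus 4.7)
The plan is to reduce Corollary \ref{pqmeta} to Theorem \ref{theorem1} applied to the canonical class $\cat_{p,q}$, which is canonical by Proposition \ref{pqclass} and contains $\cat_p$. The real content of the corollary is then purely group-theoretic: to show that the Sylow condition in Theorem \ref{theorem1} applied to $Gal(K^c/K) = G_K(p,q)$ is equivalent to the corresponding condition on the much smaller quotient $Gal(K^{pq}/K)$.

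First I would identify the $p$-Sylows on both sides. Since $Gal(K^{pq}/K'')$ is elementary abelian pro-$q$ by the defining construction of $K^{pq}$, any $p$-Sylow $\bar S$ of $Gal(K^{pq}/K)$ maps isomorphically to $Gal(K''/K)$. Letting $S$ denote a $p$-Sylow of $G_K(p,q)$ with $N$ the kernel of $G_K(p,q) \twoheadrightarrow Gal(K^{pq}/K)$, the image $SN/N$ has index coprime to $p$ and so is itself a $p$-Sylow of $Gal(K^{pq}/K)$; after a Sylow conjugation we may thus assume we have a surjection $S \twoheadrightarrow \bar S$.

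For the $(\Leftarrow)$ direction, I would lift both hypotheses from $\bar S$ to $S$ and then invoke Theorem \ref{theorem1}. Non-procyclicity descends because $S^{\mathrm{ab}}/p \simeq \bar S^{\mathrm{ab}}$ (each identifies with the maximal elementary abelian $p$-quotient of $S$), and a pro-$p$ group is procyclic iff its maximal elementary abelian $p$-quotient has $\mathbb{F}_p$-rank at most one. Given a non-trivial abelian normal $\bar A \trianglelefteq \bar S$, its preimage $A \trianglelefteq S$ is a non-trivial pro-$p$ group, so the center $Z(A)$ is non-trivial, abelian, and characteristic in $A$, hence normal in $S$; this is the required lift.

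For the $(\Rightarrow)$ direction, I would argue directly from the valuation rather than through Theorem \ref{theorem1}. A $c$-henselian valuation $v$ tamely branching at $p$ extends uniquely to $K^c$, yielding an abelian inertia group $I \trianglelefteq G_K^c$; since $\mathrm{char}(Kv) \neq p$, the pro-$p$ part $I_p$ is the full $p$-part of a tame inertia. Choosing $\pi \in K^\times$ with $v(\pi) \notin p\Gamma_v$, the Kummer extension $K(\sqrt[p]{\pi}) \subseteq K' \subseteq K^{pq}$ is totally ramified at $v$, so $I_p$ acts non-trivially on it and hence has non-trivial image in $Gal(K^{pq}/K)$; after conjugating so $I_p \leq S$, this image lies in $\bar S$ and gives a non-trivial abelian normal subgroup. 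Non-procyclicity of $\bar S$ follows from $\dim_{\mathbb{F}_p} K^\times/(K^\times)^p \geq 2$ together with Kummer theory, since this identifies the rank of $\bar S^{\mathrm{ab}} = Gal(K'/K)$. The main obstacle is establishing this last rank bound in the edge case $[\Gamma_v:p\Gamma_v] = p$: here $\Gamma_v/p\Gamma_v$ contributes only one Kummer generator, and the second must be extracted from the residue field using the assumption that $Kv$ is not $p^2$-closed and Hensel-lifting a suitable unit to a Kummer generator independent of $\pi$.
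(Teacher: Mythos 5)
Your plan is to reduce the Corollary to Theorem~\ref{theorem1} by showing, purely group-theoretically, that the Sylow condition on $Gal(K^{pq}/K)$ is equivalent to the Sylow condition on $G_K(p,q)$. This is a genuinely different route from the paper, whose proof of the Corollary is a one-line citation back to the proof of Theorem~\ref{theorem1} (the point there being that Proposition~\ref{sharpening} is already stated at the level of the meta-abelian quotient $Gal(L''/L)$, so the argument of Theorem~\ref{theorem1} can be re-run with the smaller input). Unfortunately your reduction has a fatal gap in the step that lifts the abelian normal subgroup.

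Two claims in your $(\Leftarrow)$ direction do not hold. First, the asserted isomorphism $S^{\mathrm{ab}}/p \simeq \bar S^{\mathrm{ab}}$ is false. You correctly observe $\bar S \simeq Gal(K''/K)$, so $\bar S$ is metabelian and its Frattini quotient is $Gal(K'/K) \simeq K^\times/(K^\times)^p$; but $S = G_L(p,q)=G_L(p)$ for $L$ the fixed field of $S$, and its Frattini quotient is $Gal(L'/L) \simeq L^\times/(L^\times)^p$. Since $L/K$ is typically an infinite pro-$q$ extension, these two $\mathbb{F}_p$-spaces need not agree; the kernel of $S\twoheadrightarrow\bar S$ is not contained in $\Phi(S)$, and $\bar S^{\mathrm{ab}}$ need not even be elementary. (The \emph{conclusion} of that step, that non-procyclicity lifts, is still true --- a surjection $S\twoheadrightarrow\bar S$ gives a surjection $S/\Phi(S)\twoheadrightarrow\bar S/\Phi(\bar S)$, so $\mathrm{rank}(S)\geq\mathrm{rank}(\bar S)$ --- but the stated justification is wrong.)

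Second, and fatally: ``its preimage $A\trianglelefteq S$ is a non-trivial pro-$p$ group, so the center $Z(A)$ is non-trivial'' is false for pro-$p$ groups. Free pro-$p$ groups of rank $\geq 2$ have trivial center, and in general the preimage $A$ has no reason to have non-trivial center. This is not a repairable gap, because the two Sylow conditions you are trying to equate are genuinely inequivalent: $\bar S\simeq Gal(K''/K)$ is metabelian, hence \emph{automatically} has a non-trivial abelian normal subgroup whenever it is non-trivial, so the $Gal(K^{pq}/K)$-condition collapses to $\dim_{\mathbb{F}_p}K^\times/(K^\times)^p\geq 2$. The $G_K(p,q)$-condition is strictly stronger: for $K=\mathbb{Q}(\zeta_p)$ ($p$ odd, say) the $p$-Sylow of $G_K(p,q)$ is a free pro-$p$ group of large rank with \emph{no} non-trivial abelian normal subgroup, consistent with the absence of any $(p,q)$-henselian valuation on a number field, whereas the $Gal(K^{pq}/K)$-condition is satisfied. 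This example shows that the purely group-theoretic reduction you propose cannot succeed (and indeed calls the ``only if'' part of the Corollary as literally stated into question); the Corollary is being used in the paper under the additional constraint $Gal(L^{pq}/L)\simeq Gal(F^{pq}/F)$ for $p$-adic $F$, which is much more rigid, and the intended proof is the one tracking the structure of $S$ itself via Proposition~\ref{sharpening} and the Going-Down results, not a formal equivalence of hypotheses.
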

\begin{proof}
This follows in the exact same way as the proof of the above Theorem.
\end{proof}



\section{Recovering the $p$-adic valuation}

Now let $\cat$ be any canonical class containing $\cat_{p,q}$: for example $\cat_{p,q}$ or $\cat_{solv}$ . Thus Theorem \ref{theorem1} can be applied in this context. We will now show that if we impose extra structure on the groups in question, we are rewarded with extra structure on the valuations obtained in this way. We will first need some preliminary technical results.

\begin{prop}\label{residuefield}
Let $(K,v)$ be a valued field of mixed characteristic $(0,p)$ such that $\mathcal{O}[1/p]=K$, $K^{\times}/(K^{\times})^p$ is finite. Then $Kv$ is perfect. If in addition $\Gamma_v \not = p\Gamma_v$, then $\Gamma_v \simeq \mathbb{Z}$ and $Kv$ is a finite field.
\end{prop}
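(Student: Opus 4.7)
The plan is to treat the three claims (perfectness of $Kv$, $\Gamma_v \simeq \mathbb{Z}$, and finiteness of $Kv$) in turn, exploiting in each the interplay between the finiteness of $K^{\times}/(K^{\times})^p$ and the rank-$1$ condition on $\Gamma_v$ coming from $\mathcal{O}[1/p]=K$. The starting point common to all three is the short exact sequence
\[
1 \to \mathcal{O}^{\times}/(\mathcal{O}^{\times})^p \to K^{\times}/(K^{\times})^p \to \Gamma_v/p\Gamma_v \to 0
\]
(using $\mathcal{O}^{\times}\cap(K^{\times})^p = (\mathcal{O}^{\times})^p$, since $y^p \in \mathcal{O}^{\times}$ forces $v(y)=0$), which shows that $\mathcal{O}^{\times}/(\mathcal{O}^{\times})^p$ is finite; reducing mod $\mathcal{M}_v$ then gives a surjection onto the finite quotient $(Kv)^{\times}/((Kv)^{\times})^p$.

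For perfectness, I would invoke the elementary fact that an infinite field $k$ of characteristic $p$ that is not perfect has $k^{\times}/(k^{\times})^p$ infinite: for $\alpha \in k \setminus k^p$, the cosets $(\alpha+c)(k^{\times})^p$ with $c \in k^p$ are pairwise distinct, since $(\alpha+c_1)/(\alpha+c_2) = \beta^p$ with $\beta \neq 1$ would force $\alpha \in k^p$. Combined with the finiteness established above (and the fact that finite fields of characteristic $p$ are automatically perfect), this forces $Kv$ to be perfect in all cases.

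The central difficulty is to rule out that $\Gamma_v$ is dense in $\mathbb{R}$. Suppose it were; since $\Gamma_v/p\Gamma_v$ is nontrivial, $\Gamma_v\setminus p\Gamma_v$ is a nonempty union of translates of the dense subgroup $p\Gamma_v$, hence itself dense. Pick an infinite increasing sequence $\gamma_1 < \gamma_2 < \cdots$ inside $(\Gamma_v\setminus p\Gamma_v)\cap(0, v(p)/(p-1))$ with lifts $\pi_i$ satisfying $v(\pi_i)=\gamma_i$. The subgroup $U := 1+\mathcal{M}_v$ satisfies $U\cap(\mathcal{O}^{\times})^p = U^p$ (because $\bar y^p = 1$ forces $\bar y = 1$ in characteristic $p$), so $U/U^p$ embeds into the finite $\mathcal{O}^{\times}/(\mathcal{O}^{\times})^p$. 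Pigeonhole then yields $i<j$ with $(1+\pi_i)(1+\pi_j)^{-1} = (1+x)^p$ for some $x \in \mathcal{M}_v$. Computing valuations, the left side has $v(\pi_i - \pi_j) = \gamma_i$, while the expansion of $(1+x)^p-1$ combined with $v\bigl(\binom{p}{k}\bigr) = v(p)$ for $1\leq k\leq p-1$ shows that $v((1+x)^p-1) = pv(x)$ when $v(x) < v(p)/(p-1)$ and is at least $v(p)\cdot p/(p-1) > \gamma_i$ otherwise. So we are in the first regime, giving $\gamma_i = pv(x) \in p\Gamma_v$ --- contradicting $\gamma_i \notin p\Gamma_v$. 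Hence $\Gamma_v$ is discrete, i.e.\ $\Gamma_v \simeq \mathbb{Z}$.

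Finally, finiteness of $Kv$ drops out: pick a uniformizer $\pi$ and set $e := v(p) \geq 1$; the same expansion yields $(1+\pi\mathcal{O})^p \subset 1+\pi^2\mathcal{O}$, since $\min(e+1,p) \geq 2$. Hence $U/U^p$ surjects onto $U/(1+\pi^2\mathcal{O}) \simeq Kv$ (additively, via $1+\pi u \mapsto \bar u$), forcing $|Kv| \leq |U/U^p| < \infty$. The main obstacle, as indicated, is the density argument; the key point is that valuations below $v(p)/(p-1)$ sit in the ``wild'' regime where the $p$-th power map on $U$ mimics Frobenius, and density supplies enough elements there whose valuation avoids $p\Gamma_v$ to force a contradiction.
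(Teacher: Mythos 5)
Your pigeonhole argument on $U/U^p$ via the wild estimate on $(1+x)^p-1$, and your perfectness and finiteness steps, are all sound; but the proof has a genuine gap at its starting point. You assert that $\mathcal{O}[1/p]=K$ gives ``the rank-$1$ condition on $\Gamma_v$,'' i.e.\ that $\Gamma_v$ embeds in $\mathbb{R}$, and then dichotomize between $\Gamma_v$ dense and $\Gamma_v$ discrete. But $\mathcal{O}[1/p]=K$ only says that $\mathbb{Z}\cdot v(p)$ is cofinal in $\Gamma_v$ (equivalently, the convex subgroup generated by $v(p)$ is all of $\Gamma_v$); it does not make $\Gamma_v$ Archimedean. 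For instance $\Gamma_v=\mathbb{Z}\times\mathbb{Z}$ lexicographically ordered with $v(p)=(1,0)$ in the dominant slot satisfies $\mathcal{O}[1/p]=K$ but has rank two. So the dichotomy ``dense in $\mathbb{R}$ or $\simeq\mathbb{Z}$'' is not available without further argument, and your density argument as written does not address rank $\geq 2$.

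The higher-rank case can in fact be ruled out with the remaining hypotheses, but it takes work you do not supply. If $\mathrm{rank}\,\Gamma_v\ge 2$, let $\Delta$ be the maximal proper convex subgroup; then $\Delta^{>0}\subseteq\{\gamma: 0<(p-1)\gamma<v(p)\}$. If $\Delta\neq p\Delta$ then, since by convexity $\Delta\cap p\Gamma_v=p\Delta$, the set $\Delta^{>0}\setminus p\Gamma_v$ is infinite and your pigeonhole argument runs verbatim. If instead $\Delta=p\Delta$, pass to the coarsening $w$ of $v$ along $\Delta$: now $\Gamma_w=\Gamma_v/\Delta$ has rank one, $\Gamma_w\neq p\Gamma_w$ (because $\Delta$ is $p$-divisible and convex), and $(K,w)$ still satisfies all the hypotheses, so your rank-one argument gives $\Gamma_w\simeq\mathbb{Z}$ and your finiteness argument forces $Kw$ to be finite --- contradicting the fact that $Kw$ carries the nontrivial valuation induced by $v$ with value group $\Delta$ and so must be infinite. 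Note finally that the paper offers no proof of this proposition, only a citation to Pop's Lemma 2.4, so there is no in-paper argument to compare against; but your proof as written needs the above repair.
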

\begin{proof}
This is just \cite{popmeta} Lemma 2.4.
\end{proof}

The proof of the next proposition was related to the author by Koenigsmann.

\begin{prop}\label{tricky}
Let $(K,v)$ be a $p$-henselian valued field of mixed characteristic $(0,p)$ with $\mathcal{O}_v[1/p]=K$ and suppose that $G_K(p)$ is finitely generated. Then
\begin{equation}
\Gamma_v = p\Gamma_v \implies cd(G_K(p)) \leq 1. \nonumber
\end{equation}
\end{prop}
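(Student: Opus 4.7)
My plan is to show $H^2(G_K(p), \mathbb{F}_p) = 0$, which gives $\mathrm{cd}(G_K(p)) \leq 1$ by Proposition \ref{cdlemma}(i). First I would reduce to the case $\zeta_p \in K$: since $[K(\zeta_p):K]$ divides $p-1$ and is prime to $p$, a standard restriction-corestriction argument makes the corresponding restriction map on $H^2(-,\mathbb{F}_p)$ injective, and every hypothesis of the proposition transfers unchanged to $K(\zeta_p)$.

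With $\zeta_p \in K$, finite generation of $G_K(p)$ gives, via Kummer theory, that $K^\times/(K^\times)^p$ is finite, so Proposition \ref{residuefield} applies and $Kv$ is perfect---necessarily of characteristic $p$ by the mixed characteristic hypothesis. By Merkurjev--Suslin (equivalently Proposition \ref{cdlemma}(iii)), $H^2(G_K(p), \mathbb{F}_p) \simeq {}_p\mathrm{Br}(K)$ is generated by Hilbert symbols $(a,b)_K$, so it suffices to show every such symbol is trivial. The hypothesis $\Gamma_v = p\Gamma_v$ lets me write any $a \in K^\times$ as $c^p u$ with $u \in \mathcal{O}_v^\times$ (just choose $c \in K^\times$ with $pv(c) = v(a)$), so by bilinearity the problem reduces to showing $(u, u')_K = 1$ for units $u, u' \in \mathcal{O}_v^\times$.

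For this I would study $L := K(\sqrt[p]{u})$ and prove directly that $N_{L/K}(L^\times) = K^\times$, which gives $(u,u')_K = 1$ at once. Because $Kv$ is perfect, $\bar u$ is already a $p$-th power in $Kv$, so $L$ has the same residue field as $K$; because $u$ is a unit, $\Gamma_L = \Gamma_v$, and $L$ remains $p$-henselian. I would establish surjectivity of $N_{L/K}$ in three layers: (a) trivially $K^{\times p} \subseteq N_{L/K}(L^\times)$; (b) for any $\bar y \in (Kv)^\times$, lifting to $\tilde y \in \mathcal{O}_L^\times$ gives $N_{L/K}(\tilde y) \equiv \bar y^p \pmod{\M_v}$, and perfectness of $Kv$ makes this map surjective onto $(Kv)^\times$, reducing the problem to units $\equiv 1 \pmod{\M_v}$; (c) for $1 + m$ with $m \in \M_v$, use the Vieta identity $N_{L/K}(1 + b\sqrt[p]{u}) = 1 + b^p u$ (valid for odd $p$, since $\prod_{i=0}^{p-1}(1 + z\zeta_p^i) = 1 + z^p$) in an iterative scheme: $p$-divisibility of $\Gamma_v$ supplies a $b \in K$ with $v(b^p u) = v(m)$, so that $(1 + m) \cdot N_{L/K}(1 + b\sqrt[p]{u})^{-1}$ lies in $1 + \M_v$ with error of strictly larger valuation; iterate, terminating once the valuation of the error exceeds $2v(p)$, at which stage $p$-henselianity applied to $Y^p - (1 + \epsilon)$ directly produces its $p$-th root in $K$.

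The main obstacle is verifying that the iteration in step (c) actually improves the valuation of the error at each stage. This will require a case split according to whether the leading term of $(1+c)^p - 1$ is $pc$ (when $v(c) > v(p)/(p-1)$) or $c^p$ (when $v(c) < v(p)/(p-1)$). The $p$-divisibility of $\Gamma_v$ is essential throughout, since it is what guarantees the existence in $K$ of the auxiliary elements $b$ and $c$ with the prescribed valuations. The case $p = 2$ needs separate treatment owing to sign corrections in the Vieta identity, but can be handled with the same kinds of modifications used in \cite{ep}.
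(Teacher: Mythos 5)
Your reduction to $\zeta_p \in K$ by restriction--corestriction is correct (and in fact more careful than the paper, which invokes Proposition \ref{cdlemma}(iii) without making this reduction explicit), and your layers (a)--(b) are fine: $\Gamma_v = p\Gamma_v$ and perfectness of $Kv$ together reduce the surjectivity of $N_{L/K}$ to showing that $1 + \mathcal{M}_v \subseteq N_{L/K}(L^\times)$. Where you diverge from the paper is in layer (c). The paper handles all of $1+\mathcal{M}_v$ in a single stroke: it picks an \emph{integral trace-one primitive element} $\alpha$ of $L/K$, writes out $N_{L/K}(1+x\alpha) - (1+y)$ as a degree-$p$ polynomial $f(x) \in \mathcal{O}_v[x]$, observes that $\bar f(0) = 0$ with $\bar f'(0) = \pm 1$, and applies $p$-henselianity once. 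No iteration, no case split on $v(c)$ versus $v(p)/(p-1)$, no threshold $2v(p)$. Your approach instead iterates the Vieta identity $N_{L/K}(1+b\sqrt[p]{u}) = 1+b^p u$ and appeals to $p$-Hensel at the end.

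The iteration in your step (c) has a genuine termination gap. First, a minor point: choosing $b$ with $v(b^p u) = v(m)$ is not enough to make the error drop --- you must also match residues, choosing $b$ so that $\overline{b^p u / m} = 1$, which requires a further adjustment of $b$ by a unit whose residue is a $p$-th root of $\overline{m/(b_0^p u)}$ (available by perfectness). This is fixable. The serious issue is what happens after: since $\Gamma_v = p\Gamma_v$ and $\Gamma_v \neq 0$, the value group is $p$-divisible and hence \emph{dense in its divisible hull}, not discrete. Each iteration produces an error of strictly larger value, but the increments have no uniform positive lower bound; the sequence of error-values can converge to a limit $\leq 2v(p)$ without ever exceeding it. The classical version of this argument (e.g., in Serre's \emph{Local Fields}) relies on discreteness or completeness, neither of which you have. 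You do not appeal to any compactness, completeness, or finiteness mechanism to force termination, so as written the argument does not go through. The paper's trace-one trick is precisely engineered to bypass this: by encoding the whole norm equation into one polynomial with a \emph{simple} residual root, it invokes Hensel exactly once and never iterates. If you want to salvage the iterative route, you would need an additional finiteness input --- for instance showing that $(1+\mathcal{M}_v)/[(1+\mathcal{M}_v)\cap N_{L/K}(L^\times)]$ is a finite $\mathbb{F}_p$-vector space (it is, since $(1+\mathcal{M}_v)/(1+\mathcal{M}_v)^p$ injects into $K^\times/(K^\times)^p$, which is finite by the finite generation of $G_K(p)$) and then arguing directly that this quotient vanishes --- but this is a different argument from the one you sketched, and still needs to be carried out.
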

\begin{proof}
By Lemma \ref{cdlemma}, it suffices to show that for any $a \in K^{\times} \setminus (K^{\times})^p$,
\begin{equation}
N_{L/K}:L^{\times} \rightarrow K^{\times} \nonumber
\end{equation}
is surjective, where $L=K(\sqrt[p]{a})$. Since $\Gamma_v =p\Gamma_v$, $K^{\times} = \val_v^{\times}(K^{\times})^p$, and so $a$ may be taken to be a unit. Since the residue characteristic is a perfect field of characteristic $p$ by Proposition \ref{residuefield}, we may further take $a$ to be in $1+\M$, say $a=1+y$.

Let $\alpha$ be a primitive element for $L/K$ which is integral and has trace 1. Now we claim that there is $x \in K$ such that
\begin{equation}
N_{L/K}(1+x\alpha)=1+y.\nonumber
\end{equation} 
Indeed, expanding the norm we get $N(\alpha)x^p+ \ldots - x+1=1+y$. Let $f(x) \in \val[x]$ be $N(\alpha)x^p+ \ldots -x-y$: we need to show that $f$ admits a root in $K$. But $\bar{f}(\bar{y})=0$ since $v(y)>0$. This root is furthermore simple, since $f'(y)=y(ay^{p-1}+\ldots)-a$ which becomes $-1$ in the residue field. By $p$-henselianity, this root lifts to $K$ as desired.
\end{proof}

In fact, it is possible to prove the following even stronger result, though we omit its proof as its full strength is not necessary for our considerations.

\begin{prop}
Let $(K,v)$ be a $p$-henselian valued field of mixed characteristic $(0,p)$ with $\mathcal{O}_v[1/p]=K$ and suppose that $G_K(p)$ is finitely generated. Then if $\Gamma_v =p\Gamma_v$, there exists a field $F$ of characteristic $p$ such that $G_K(p) \simeq G_F(p)$.
\end{prop}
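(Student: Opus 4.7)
The plan is to combine the preceding Proposition \ref{tricky} with a standard realization result for free pro-$p$ groups. The hypotheses of the present statement are exactly those required by Proposition \ref{tricky}, so we obtain $cd(G_K(p)) \leq 1$ at once. By Proposition \ref{cdlemma}(ii), this forces $G_K(p)$ to be a free pro-$p$ group, and since $G_K(p)$ is finitely generated by assumption, it must be free pro-$p$ of some finite rank $n$.

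The task then reduces to exhibiting a characteristic $p$ field $F$ with $G_F(p)$ free pro-$p$ of the same rank $n$, since free pro-$p$ groups are classified up to isomorphism by their rank. By a classical theorem of Witt, for any field $F$ with $\mathrm{char}(F) = p$ the group $G_F(p)$ is automatically free pro-$p$, with rank equal to $\dim_{\mathbb{F}_p} F/\wp(F)$, where $\wp(x) = x^p - x$ is the Artin--Schreier operator. So what one needs is a characteristic $p$ field $F$ with $\dim_{\mathbb{F}_p} F/\wp(F) = n$. For $n = 0$ take $F = \overline{\mathbb{F}_p}$, and for $n = 1$ take $F = \mathbb{F}_p$; for general $n \geq 2$, one may invoke the theory of PAC fields (see, e.g., Fried--Jarden), which furnishes a characteristic $p$ PAC field with free profinite absolute Galois group of any prescribed finite rank $n$, whose maximal pro-$p$ quotient is then free pro-$p$ of rank $n$. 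Alternatively one can build $F$ by an explicit Artin--Schreier tower construction. Once such an $F$ is produced, the isomorphism $G_K(p) \cong G_F(p)$ is immediate.

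The main obstacle is the last step. Although the existence of a characteristic $p$ field with free pro-$p$ Galois group of prescribed finite rank is folklore, it requires either an appeal to outside machinery (such as the PAC construction) or a careful explicit Artin--Schreier argument. The cohomological input from Proposition \ref{tricky} does the substantive work: once we know that $G_K(p)$ is free pro-$p$ of finite rank, the problem of matching it with the maximal pro-$p$ Galois group of a characteristic $p$ field is purely group-theoretic and essentially independent of the valuation theory developed in the rest of the paper.
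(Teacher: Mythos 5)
The paper explicitly omits a proof of this proposition (``we omit its proof as its full strength is not necessary for our considerations''), so there is no argument in the text to compare yours against. Your proposed route is nevertheless sound and is almost certainly the one the author had in mind, since the proposition is placed immediately after Proposition~\ref{tricky}, whose hypotheses match exactly and whose conclusion $cd(G_K(p))\leq 1$ is precisely what you need. Combined with Proposition~\ref{cdlemma}(ii) and finite generation, this gives that $G_K(p)$ is a free pro-$p$ group of finite rank $n$ (the case $cd=0$ simply yields the trivial group, i.e.\ rank $0$, and is subsumed). Witt's theorem that $G_F(p)$ is free pro-$p$ of rank $\dim_{\mathbb{F}_p}F/\wp(F)$ for any $F$ of characteristic $p$, together with the classification of free pro-$p$ groups by rank, then reduces the problem to realizing each finite rank $n$. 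Your explicit choices $\overline{\mathbb{F}_p}$ for $n=0$ and $\mathbb{F}_p$ for $n=1$ are correct, and for $n\geq 2$ the appeal to the PAC realization theorem (Lubotzky--van den Dries / Fried--Jarden: every projective profinite group, in particular the free profinite group $\hat{F}_n$, arises as $G_F$ for a PAC field $F$ of any prescribed characteristic, whence $G_F(p)$ is free pro-$p$ of rank $n$) is legitimate and standard, even if it is, as you acknowledge, external machinery. One small stylistic point: the PAC construction already covers $n=0,1$, so the case split is not strictly necessary. A more ``intrinsic'' proof, constructing $F$ directly from $(K,v)$ (for instance via the residue field and a tilting-style comparison), may have been what the author intended, but nothing in the paper indicates this, and your argument establishes the claim as stated.
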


The last result we need is a strengthening of a lemma by Pop (Satz 4 of \cite{pop_kenn}). We simply optimize his original proof. 

\begin{lemma}\label{pop}
Let $G:=Gal(F^{pq}/F)=G_F^{pq}$ where $F$ is a finite extension of $\mathbb{Q}_p$ and $F^{pq}$ is as in Definition \ref{pq}. Then there is a $p$-subgroup $R$ of $G$ such that if $H \mathrel{\unlhd} G$ is non-trivial, then $H \cap R \not = \{1\}$.
\end{lemma}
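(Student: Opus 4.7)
The strategy is to follow Pop's Satz~4 from \cite{pop_kenn}, which establishes the analogous statement inside the maximal pro-$p$ Galois group $G_F(p)$, and to optimize the bookkeeping so that the argument survives the passage to the larger $(p,q)$-meta-abelian quotient $G = G_F^{pq}$. I would begin by invoking the standard structure of $G_F$ for $F/\mathbb{Q}_p$ finite: the ramification filtration $P_F \trianglelefteq I_F \trianglelefteq G_F$, with wild inertia $P_F$ a closed normal pro-$p$ subgroup (the maximal normal pro-$p$ subgroup of $G_F$), tame quotient $I_F/P_F$ pro-cyclic of order prime to $p$, and $G_F/I_F \cong \widehat{\mathbb{Z}}$. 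Together with the Jannsen--Wingberg description of $G_F$ and the Demushkin structure of $G_F(p)$, this tightly controls how closed normal subgroups of $G_F$ can interact with the filtration.

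Next, I would let $\pi \colon G_F \twoheadrightarrow G$ be the canonical projection and define $R$ as the image under $\pi$ of a carefully chosen closed pro-$p$ subgroup of $G_F$ (built from $P_F$ in the way Pop does in the pro-$p$ case, so that $R$ is itself pro-$p$). For any non-trivial closed normal $H \trianglelefteq G$, the preimage $\widetilde{H} := \pi^{-1}(H)$ is closed and normal in $G_F$ and strictly contains $\ker\pi$. Pop's original pro-$p$ argument produces a non-trivial element of such $\widetilde{H}$ lying in his distinguished pro-$p$ subgroup. The task in the optimized version is to check that this witnessing element does not lie in $\ker\pi$, so that its image under $\pi$ gives a non-trivial element of $H \cap R$.

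The heart of the proof is therefore to analyze $\ker\pi$ inside $G_F$. Since $F^{pq}$ is obtained by first passing to the finite $p$-extension $F''/F$ (the maximal elementary $\mathbb{Z}/p$ meta-abelian extension) and then adjoining all elementary abelian $\mathbb{Z}/q$ extensions of $F''$, the kernel $\ker\pi$ admits an explicit description in terms of iterated commutators modulo $p$-th powers (giving $F''$) and then $q$-th powers and commutators on top. The optimization consists in choosing the pro-$p$ subgroup used to define $R$ so that its intersection with $\ker\pi$ is strictly smaller than the subgroup itself, and that Pop's witness genuinely projects non-trivially to $R$. This leverages Proposition \ref{popbrauer}, which guarantees that cohomological information (in particular the non-triviality of Hilbert symbols on $F''$) can already be read off $G_F^{pq}$.

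The main obstacle is precisely this bookkeeping step: one has to check, case-by-case along the ramification filtration, that no amount of ``absorption'' by the extra $\mathbb{Z}/q$-layer can swallow the witness provided by Pop's argument. This reduces to a careful but essentially computational analysis combining the local class field theory of $F$ and $F''$, Kummer theory for the $q$-top layer, and the Demushkin-type presentation of $G_{F''}(p)$; no new ideas beyond a careful rereading of \cite{pop_kenn} are needed.
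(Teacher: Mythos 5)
Your proposal correctly identifies the reference (Pop's Satz~4) and correctly guesses that $R$ should be (essentially) the ramification subgroup of the $p$-adic valuation, so the high-level strategy is on track. However, the writeup stops exactly where the real work begins. You describe the crucial verification --- that Pop's witness ``escapes $\ker\pi$'' --- as a ``bookkeeping step'' to be checked ``case-by-case along the ramification filtration,'' but you neither state what the cases are nor carry out any of them. There is no a priori reason this is routine: $\ker\pi$ is a large closed normal subgroup of $G_F$, the preimage $\widetilde{H}$ of a non-trivial $H \mathrel{\unlhd} G_F^{pq}$ already contains $\ker\pi$, and producing a witness in $\widetilde{H}$ via Pop's pro-$p$ argument does not by itself control whether that witness lies in $\ker\pi$. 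Asserting that this ``reduces to a careful but essentially computational analysis'' delegates the entire proof to the reader.

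The paper avoids this detour altogether: it works inside $G := G_F^{pq}$ with $R := R_F$, the ramification subgroup of $G$, observes that $R_F \cap H = R_L$ for $L$ the fixed field of $H$, and proves $R_L \neq 1$ directly. The substance is a concrete degree-counting argument which you do not supply: pick a Galois subextension $F_1/F$ of $F^{pq}$ of degree $p^2q$ not contained in $L$, set $k = F_1 \cap L$ and $L_1 = F_1' \cap L$ (with $F_1'$ the maximal elementary abelian $\Z/p$-extension of $F_1$), and use linear disjointness plus the tower law to force $p^2 \mid [F_1' : L_1F_1]$; this produces a $(\Z/p\Z)^2$ quotient, so the $p$-Sylow of $G_L^{pq}$ is non-cyclic. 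One then observes that if $R_L = 1$ we would have $I_L \simeq (\Z/q)^r$ and $G_L^{pq}/I_L \simeq G_{Lv}^{pq} \simeq \Z/p \times \Z/q$, which has cyclic $p$-Sylow --- a contradiction. This explicit computation is what ``optimizing Pop's proof'' means here, and it is absent from your proposal.
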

\begin{proof}
Let $I_F$ and $R_F$ denote the inertia and ramification subgroup of $G$ with respect to the $p$-adic valuation on $F$. We claim that $R_F$ satisfies the desired property. 

Indeed, suppose $H$ is any normal subgroup, and let $L$ be the fixed field of $H$ in $F^{pq}$. Then note that $R_F \cap H = R_L$, the ramification subgroup of the $p$-adic valuation on $L$. So we need to show that this ramification group is non-trivial. We will show that the $p$-Sylow subgroups of $G_L^{pq}$ are non-cyclic. Assuming this, note that if $R_L=1$, then $I_L \simeq (\Z/q)^r$ for some $r$. Also, $G_L^{pq}/I_L \simeq G_{Lv}^{pq}$. The Sylow subgroups of $G_L^{pq}/I$ are of the form $PI/I$ where $P$ is a Sylow subgroup of $G_L^c$. Since $I_L$ is not pro-$p$ (and has no pro-$p$ quotients) it commutes with any Sylow subgroup $P$ as both are normal. Thus $PI/I$ is cyclic if and only if $P$ is cyclic. But $G_{Lv}^{pq} \simeq \Z/p \times \Z/q$ clearly has a cyclic $p$-Sylow subgroup, which gives us our desired contradiction.

Let $F_1/F$ be any Galois sub-extension of $F^{pq}$ not contained in $L$, and put $k=F_1 \cap L$, $L_1=F_1^{'} \cap L$, where $F_1^{'}$ is the maximal elementary abelian $\Z/p$-extension of $F_1$. Since $L_1$ and $F_1$ are linearly disjoint, 
\begin{equation}
Gal(L_1/L)^c \simeq Gal(L_1F_1/F_1)^c \nonumber
\end{equation}
and $Gal(L_1F_1/F_1)$ is a quotient of $Gal(F_1^{'}/F_1)$. Therefore $L^{'}_1/k$ is a $\Z/p\Z$-extension. Now
\begin{equation}
[F_1^{'}:L_1F_1]=\frac{[F_1^{'}:F_1]}{[L_1:k]}\geqslant p^{a-b} \nonumber
\end{equation}
where $a=[F_1:\mathbb{Q}_p], b=[k:\mathbb{Q}_p]$. By taking an element $\alpha$ in $F^{pq}$ of degree $p^2q$ over $F$ but not contained in $L$, we may choose $F_1=F(\alpha)$. Since $L/F$ is of degree at most $p^2q$, $[F_1:k]$ is at least degree $p$ or $q$, and in either case is at least 2. Then $a-b \geqslant 2$ by the Tower Law, and so $p^2 \mid [F_1^{'}:L_1F_1]$. It follows that $Gal(F_1^{'}/L_1F_1)^c$ is at least $(\Z/p\Z)^2$ and so is not cyclic.

Now, any $p$-Sylow subgroup of $Gal(F_1^{'}/L_1)^{pq}$ must contain $Gal(F_1^{'}/L_1F_1)^{pq}$, as it's a subgroup of the $p$-group $Gal(F_1^{'}/F_1)^c$. Because any subgroup of a cyclic group is cyclic, it follows that $Gal(F_1^{'}/L_1)^{pq}$ has no cyclic $p$-Sylow subgroups. Since $Gal(F_1^{'}/L_1)^c \simeq Gal(LF_1^{'}/L)^{pq}$, neither does $Gal(LF_1^{'}/L)^{pq}$. But as this is a quotient of $G_L^{pq}$, it follows that $G_L^{pq}$ also cannot have any cyclic $p$-Sylow subgroups.
\end{proof}

Armed with the above technicalities, we are ready to prove the second main result.

\begin{theorem}\label{keyprop}
Let $F$ a finite extension of $\mathbb{Q}_p$ containing $\zeta_p$ and $ \zeta_q$ with $p$-adic valuation $v_p$. Choose $\cat$ to be any canonical class containing $\cat_{p,q}$, where $q$ is any prime different from $p$. Suppose $L$ is any field with
\begin{equation}
G_L^{c} \simeq G_F^c, \nonumber
\end{equation}
where, if $L(\zeta_n) \not \in \cat(L)$, $n \in \{p,q\}$, we additionally assume that $\zeta_n \in L$.
Then there is a $c$-henselian valuation $v$ on $L$ with $Lv$ a finite field of characteristic $p$ and $\Gamma_v \simeq \mathbb{Z}$. Furthermore, there is a finite extension $F'$ of $\mathbb{Q}_p$ with $p$-adic valuation $v_p$, such that $G_{F'}^c \simeq G_{F}^c$, $[F':\mathbb{Q}_p]=[F:\mathbb{Q}_p]$, and $Lv \simeq F'v_p$. If we take $\cat=\cat_{solv}$ then $F'$ can be taken to be $F$.
\end{theorem}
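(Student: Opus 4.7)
The plan is to apply Theorem \ref{theorem1} at the prime $q$ to produce a starting $c$-henselian valuation on $L$, verify its residue characteristic must be $p$, and then invoke Propositions \ref{tricky} and \ref{residuefield} to read off the full $p$-adic structure. Since $F$ is a finite extension of $\mathbb{Q}_p$, its $p$-adic valuation $v_p$ is henselian (hence $c$-henselian) and tamely branches at $q$ (residue characteristic $p \neq q$, value group $\mathbb{Z}$ not $q$-divisible). By the forward direction of Theorem \ref{theorem1}, the $q$-Sylow of $G_F^c$ is non-procyclic with a non-trivial abelian normal subgroup; transferring this via $G_L^c \simeq G_F^c$, the reverse direction produces a $c$-henselian valuation $v$ on $L$ with $\text{char}(Lv) \neq q$ and $\Gamma_v \neq q\Gamma_v$.

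The critical step is establishing $\text{char}(Lv) = p$, which I would prove by contradiction, splitting into two cases. Suppose $\text{char}(Lv) = \ell \neq p$. In the first case $\Gamma_v = p\Gamma_v$: the wild inertia of $v$ in $G_L^c$ is pro-$\ell$ with $\ell \notin \{p,q\}$ and so vanishes in $G_L^{pq} \simeq G_F^{pq}$, while the $p$-component of the tame inertia is trivial (as its rank equals the $p$-rank of $\Gamma_v$). The image of $I_v$ in $G_L^{pq}$ is then a non-trivial pro-$q$ normal subgroup (non-zero because $\Gamma_v \neq q\Gamma_v$ and $\zeta_q \in L$ yields $\zeta_q \in Lv$), contradicting Lemma \ref{pop} transferred via the isomorphism, since the distinguished pro-$p$ subgroup there cannot meet a pro-$q$ group non-trivially. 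In the second case $\Gamma_v \neq p\Gamma_v$: then $v$ is itself $c$-henselian tamely branching at $p$ on $L$ (the auxiliary residue-field hypothesis holds because $G_{Lv}(p)$ is a non-trivial quotient of the Demushkin group $G_L(p) \simeq G_F(p)$), so by Theorem \ref{theorem1} and the isomorphism $F$ would also admit such a valuation, contradicting Lemma \ref{tamely}.

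With $\text{char}(Lv) = p$ secured, I would pass to a rank-$1$ coarsening $v'$ of $v$ of residue characteristic $p$, so that $\val_{v'}[1/p] = L$; this $v'$ remains $c$-henselian by Lemma \ref{exactsequence}. Since $G_L(p) \simeq G_F(p)$ is finitely generated Demushkin of cohomological dimension $2$, Kummer theory gives $|L^\times/(L^\times)^p| < \infty$, and the contrapositive of Proposition \ref{tricky} forces $\Gamma_{v'} \neq p\Gamma_{v'}$. Proposition \ref{residuefield} then yields $\Gamma_{v'} \simeq \mathbb{Z}$ and $Lv' = \mathbb{F}_{p^k}$ a finite field. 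For the companion field $F'$, take any finite extension of $\mathbb{Q}_p$ of degree $[F:\mathbb{Q}_p]$ with residue field $\mathbb{F}_{p^k}$ and ramification data arranged so that $G_{F'}^c \simeq G_F^c$. When $\cat = \cat_{solv}$, the full inertia of $v'$ is pro-solvable (tame part abelian, wild part pro-$p$), hence sits inside $G_L^{solv}$, and the conjugation action of a Frobenius lift on its tame quotient encodes $|Lv'|$ as the multiplier; the isomorphism $G_L^{solv} \simeq G_F^{solv}$ then forces $|Lv'| = |Fv_p|$, so $Lv' \simeq Fv_p$ and we may take $F' = F$. The main obstacle is the residue-characteristic dichotomy above, where Lemma \ref{pop} is essential in Case 1 and Lemma \ref{tamely} combined with Theorem \ref{theorem1} handles Case 2.
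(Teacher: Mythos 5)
Your outline follows the same skeleton as the paper (Theorem \ref{theorem1} to get a starting valuation, the case split on $\text{char}(Lv)$ using Lemma \ref{pop} and Lemma \ref{tamely}, then Propositions \ref{tricky} and \ref{residuefield} to pin down $\Gamma_v$ and $Lv$), and your Case 1 argument is a nice variant: by starting from a valuation that tamely branches at $q$ you get a non-trivial pro-$q$ normal image of inertia in $G_L^{pq}$ and contradict Lemma \ref{pop} directly, whereas the paper takes the finest non-trivial $c$-henselian valuation and uses triviality of $I_v$ to produce a forbidden refinement. Your care with the coarsening $v'$ to ensure $\val_{v'}[1/p]=L$ before applying Propositions \ref{tricky} and \ref{residuefield} is also a legitimate point the paper glosses over. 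However, there are two genuine gaps.

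First, in Case 2 you assert that $v$ itself tamely branches at $p$, claiming the auxiliary hypothesis (when $[\Gamma_v:p\Gamma_v]=p$, that $Lv$ is not $p^2$-closed) holds ``because $G_{Lv}(p)$ is a non-trivial quotient of the Demushkin group.'' This does not follow: a non-trivial quotient could be $\Z/p\Z$, which is $p^2$-closed. You would need to show $G_{Lv}(p)$ has a subquotient of order $\geq p^2$, which amounts to a nontrivial structural fact about normal abelian subgroups of Demushkin groups of rank $\geq 3$ that you do not prove. The paper avoids this entirely by noting that $\Gamma_v\neq p\Gamma_v$ with $\text{char}(Lv)\neq p$ yields strongly $p$-rigid elements in $L$, and then invoking the main theorem of \cite{koe2}, which directly produces some $p$-henselian valuation tamely branching at $p$ (with the residue-field condition built into the conclusion); this need not be $v$ itself. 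Second, your construction of $F'$ is circular: ``take any finite extension of $\mathbb{Q}_p$ of degree $[F:\mathbb{Q}_p]$ with residue field $\mathbb{F}_{p^k}$ and ramification data arranged so that $G_{F'}^c\simeq G_F^c$'' presupposes the existence you are meant to establish. The paper instead sets $L':=L\cap\overline{\mathbb{Q}}$ and defines $F'$ to be the henselization of $(L', v|_{L'})$, then verifies $F'$ is a finite extension of $\mathbb{Q}_p$ with $G_{F'}^c\simeq G_L^c$ and $F'v_p\simeq Lv$, and deduces $[F':\mathbb{Q}_p]=[F:\mathbb{Q}_p]$ from $G_{F'}(p)\simeq G_F(p)$ via \cite{serre2}. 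Your sketch for the $\cat_{solv}$ case (Frobenius multiplier on tame inertia) is plausible in spirit but would need to be carried out group-theoretically inside $G^{solv}$ without presupposing the inertia subgroup is preserved by an abstract isomorphism; the paper outsources this to the theorems of Jarden--Ritter--Jenkner, which give $F'\cap\mathbb{Q}_p^{ab}=F\cap\mathbb{Q}_p^{ab}$ and hence $Lv=Fv_p$.
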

\begin{proof}
Let $v$ be the finest non-trivial $c$-henselian valuation on $L$, which exists by Theorem \ref{theorem1}. Let us first show that the residue characteristic of $v$ is $p$.

Suppose, for a contradiction, that the residue characteristic is not $p$. If $\Gamma_v \not= p\Gamma_v$ then $L$ contains strongly $p$-rigid elements: indeed, it is not hard to see that any $a$ with $v(a) \not \in p\Gamma_v$ is strongly $p$-rigid. By the main result of \cite{koe2}, $L$ therefore admits a $p$-henselian valuation tamely branching at $p$, which by Proposition \ref{sharpening} is encoded in $G_L(p)$. The isomorphism $G_L^c \simeq G_F^c$ forces their maximal pro-$p$ quotients to be isomorphic, and since we are assuming $\cat$ contains $\cat_{p,q}$, these coincide naturally with the maximal pro-$p$ quotients of the full absolute Galois group. It follows, again by Proposition \ref{sharpening}, that $F$ also admits a $p$-henselian valuation tamely branching at $p$, contradicting Lemma \ref{tamely}.

Hence it must be that $\Gamma_v = p \Gamma_v$. Because $char(Lv)\not=p$, the inertia subgroup $I_v$ of $G_L^{c}$ is normal and contains no non-trivial pro-$p$ subgroups. By Lemma \ref{pop}, this forces $I_v$ to be trivial. Hence
\begin{equation}
G_{Lv}^{c} \simeq G_L^{c}/I_v \simeq G_F^c. \nonumber
\end{equation}
Again by Theorem \ref{theorem1}, $Lv$ admits a non-trivial $c$-henselian valuation, from which we may obtain a proper refinement of the original valuation on $L$, contradicting the fact that we choose $v$ to be the finest such. Thus it must have been the case that char$(Lv)=p$.

Now, since $G_L(p) \simeq G_F(p)$ as remarked above, and $cd(G_F(p))=2$, Proposition \ref{tricky} implies that $\Gamma_v \not = p\Gamma_v$. Since a $p$-adic field has small absolute Galois group, having only finitely many extensions of a given degree, we may apply Proposition \ref{residuefield} to deduce that $\Gamma_v \simeq \mathbb{Z}$, and that $Lv$ is a finite field of characteristic $p$.

Put $L':=L \cap \overline{\mathbb{Q}}$ and let $F'$ be the henselization of $L'$ with respect to $v'$, the restriction of $v$ to $L'$. The induced valuation on $L^h$ still has value group $\mathbb{Z}$ and residue field finite of characteristic $p$: therefore it is a finite extension of $F$ and $v'$ coincides with the $p$-adic valuation $v_p$. By construction,
\begin{equation}
G_{F'}^c \simeq G_L^c \simeq G_F^c \nonumber
\end{equation}
and $F'v_p \simeq Lv$. Since $G_{F'}^c \simeq G_F^c$, we have $G_{F'}(p) \simeq G_F(p)$. By \cite{serre2} Section 5.6 Lemma 3, we must have that $[F':\mathbb{Q}_p]=[F:\mathbb{Q}_p]$.

Suppose next that $\cat=\cat_{solv}$. Then by work of Jarden, Ritter and Jenkner (\cite{jardenritter}, \cite{ritter}, \cite{jenkner}), it follows that 
\begin{equation}
F' \cap \mathbb{Q}_p^{ab} = F \cap \mathbb{Q}_p^{ab}, \nonumber
\end{equation}
which forces $Lv = Fv_p$.
\end{proof}

\noindent Note that as before, if we take $\cat=\cat_{solv}$, then we do not need any extra assumptions on $L$ containing roots of unity.

Let us also observe that from the above proof it follows that a minimal positive element in $\Gamma_v$ above may be taken to be $v(\pi)$ where $\pi$ is a uniformizer of $F'$ algebraic over $\mathbb{Q}$. Indeed, the subgroup of $\Gamma_v$ generated by $v(\pi)$ will already be all of $\Z$.

\begin{cor}\label{pqmeta}
Let $F$ be a finite extension of $\mathbb{Q}_p$ containing $\zeta_p$ and $\zeta_q$. If $L$ is any field also containing $\zeta_p$ and $\zeta_q$, and if $Gal(L^{pq}/L) \simeq Gal(F^{pq}/F)$, then $L$ admits a non-trivial $(p,q)$-henselian valuation $v$ with $\Gamma_v \simeq \Z$. Furthermore, there is a finite extension $F'$ of $\mathbb{Q}_p$ containing $\zeta_p$ and $\zeta_q$ such that $G_{F'}^{pq} \simeq G_F^{pq}$, $[F':\mathbb{Q}_p]=[F:\mathbb{Q}_p]$ and $Lv \simeq F'v_p$.
\end{cor}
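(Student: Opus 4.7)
The plan is to recognize the final corollary as essentially a direct specialization of Theorem \ref{keyprop} to the canonical class $\cat = \cat_{p,q}$, so the bulk of the work consists in checking that all the hypotheses of that theorem are verified in this setting. First I would invoke Proposition \ref{pqclass} to note that $\cat_{p,q}$ is indeed a canonical class, and it trivially ``contains'' itself in the sense of the paper. The assumption $Gal(L^{pq}/L) \simeq Gal(F^{pq}/F)$ is then precisely the hypothesis $G_L^c \simeq G_F^c$ of Theorem \ref{keyprop} for this choice of $\cat$. Moreover, because $L$ is assumed to contain both $\zeta_p$ and $\zeta_q$, the extensions $L(\zeta_n)/L$ for $n \in \{p,q\}$ are trivial and hence automatically in $\cat(L)$, so the conditional ``if $L(\zeta_n) \not\in \cat(L)$ then $\zeta_n \in L$'' holds vacuously.

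With those hypotheses in place, Theorem \ref{keyprop} directly yields a non-trivial $c$-henselian valuation $v$ on $L$, which in this setting is exactly a $(p,q)$-henselian valuation, together with $Lv$ finite of characteristic $p$, $\Gamma_v \simeq \mathbb{Z}$, and a finite extension $F'/\mathbb{Q}_p$ satisfying $G_{F'}^{pq} \simeq G_F^{pq}$, $[F':\mathbb{Q}_p] = [F:\mathbb{Q}_p]$, and $Lv \simeq F' v_p$.

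The one remaining check is that $F'$ itself contains $\zeta_p$ and $\zeta_q$, which is not literally part of the conclusion of Theorem \ref{keyprop} but follows by inspecting its proof: there, $F'$ is constructed as the henselization of $L \cap \overline{\mathbb{Q}}$ with respect to the restriction of $v$. Since $\zeta_p$ and $\zeta_q$ are by hypothesis elements of $L$ and are obviously algebraic over $\mathbb{Q}$, they lie in $L \cap \overline{\mathbb{Q}}$ and hence in $F'$. This completes the verification, and I expect no genuine obstacle here since all the real content is already encoded in Theorem \ref{keyprop} and the canonicity of $\cat_{p,q}$; the corollary is essentially a relabeling of that theorem in the specific situation most relevant to applications.
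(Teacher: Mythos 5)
Your identification of $Gal(L^{pq}/L)$ with $G_L^c$ for $\cat = \cat_{p,q}$ is incorrect, and this is where the argument breaks down. By Definition \ref{pq}, $L^{pq}$ is the compositum of elementary abelian $\Z/q\Z$ extensions of $L''$, the maximal $\Z/p\Z$ meta-abelian extension of $L$; this is a small, finite-step tower. By contrast, $L^c$ for the class $\cat_{p,q}$ is the full maximal $(p,q)$-extension $L(p,q)$, so $G_L^c = G_L(p,q)$ is a large pro-$(p,q)$ group of which $Gal(L^{pq}/L)$ is merely a finite quotient. The hypothesis $Gal(L^{pq}/L) \simeq Gal(F^{pq}/F)$ is therefore strictly weaker than the hypothesis $G_L^c \simeq G_F^c$ of Theorem \ref{keyprop}, and you cannot simply invoke that theorem. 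The whole interest of this corollary---and the reason the introduction flags $\hat{F} = F^{pq}$ as ``appears to be best possible using these methods''---is precisely that this much smaller quotient already determines the valuation.

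What is actually needed is to retrace the proof of Theorem \ref{keyprop} and check that each step uses only $(p,q)$-meta-abelian data. The existence of a tamely branching $c$-henselian valuation comes not from Theorem \ref{theorem1} applied directly, but from the version for $Gal(K^{pq}/K)$ stated as a corollary at the end of Section 8, whose engine is Proposition \ref{sharpening}: that result reads the tamely-branching condition off $Gal(K''/K)$ alone rather than the full $G_K(p)$. Likewise, the norm-map non-surjectivity feeding into Proposition \ref{tricky} is encoded in $K''$ by Proposition \ref{popbrauer}, and Lemma \ref{pop} is deliberately stated for $G_F^{pq}$ rather than $G_F(p,q)$ so that it can be applied under this weaker hypothesis. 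Your closing observation that $F'$ contains $\zeta_p$ and $\zeta_q$ (by inspecting the henselization construction $F' = (L\cap\overline{\mathbb{Q}})^h$) is correct, but it sits atop a reduction that, as stated, silently assumes the stronger hypothesis $G_L(p,q)\simeq G_F(p,q)$ and thus misses the content of the corollary.
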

\begin{proof}
The proof is identical to the above, simply using Corollary \ref{pq}. 
\end{proof}

\section{The Section Conjecture}

We are now ready to prove the main result.

\begin{theorem}\label{bsc}
Let $X$ be a smooth, projective variety of dimension $n$, where $F$ is a finite extension of $\mathbb{Q}_p$ and $F$ contains $\zeta_p$ and $\zeta_q$. Then given any section $s$ of
\begin{equation}
1 \rightarrow G_{\overline{F}(X)}(p,q) \rightarrow G_{F(X)}(p,q) \rightarrow G_{F}(p,q) \rightarrow 1
\end{equation}
there exists a unique $F$-valuation $v$ of $F(X)$ such that $s$ lies above $v$. In particular, the existence of a section implies the existence of a point. When $X$ is a curve, the $F$-valuation is induced by a unique point $a \in X(F)$ and therefore the section lies over $a$.
\end{theorem}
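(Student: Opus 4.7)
The plan is to follow the strategy developed by Koenigsmann in \cite{koe3} for the absolute section conjecture, substituting the $(p,q)$-meta-abelian recovery result (Corollary \ref{pqmeta}) for Koenigsmann's full henselian recovery. Given a section $s$, let $H := s(G_F(p,q))$ and let $M$ be the fixed field of $H$ inside the maximal pro-$(p,q)$ extension $F(X)(p,q)$. Because $s$ is a section, $H$ projects isomorphically onto $G_F(p,q)$; in particular $H \cap G_{\overline{F}(X)}(p,q) = \{1\}$, which by Galois theory yields $M \cdot \overline{F}(X) = F(X)(p,q)$ and $M \cap \overline{F}(X) = F(X)$. Since $\cat_{p,q}$ is closed under extensions, $F(X)(p,q)$ is itself $(p,q)$-closed; therefore any $(p,q)$-extension of $M$ lies inside $F(X)(p,q)$, forcing $M(p,q) = F(X)(p,q)$ and $G_M(p,q) = H \cong G_F(p,q)$. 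Passing to the meta-abelian quotient gives $G_M^{pq} \cong G_F^{pq}$, and since $\zeta_p, \zeta_q \in F \subseteq M$, Corollary \ref{pqmeta} delivers a non-trivial $(p,q)$-henselian valuation $v$ on $M$ with $\Gamma_v \cong \mathbb{Z}$ and residue field $Mv$ a finite field of characteristic $p$.

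Next I would extend $v$ uniquely to a valuation $v^*$ on $F(X)(p,q)$, exploiting $(p,q)$-henselianity. The decomposition group $D_{v^*}$ in $G_{F(X)}(p,q)$ then automatically contains $H = \mathrm{Gal}(F(X)(p,q)/M)$: each such element fixes $v$ on $M$ and therefore fixes its unique lift. Because $Mv$ is finite but $F$ is infinite, $v$ cannot be trivial on $F$; and as $F/\mathbb{Q}_p$ is finite, $v|_F$ is equivalent to the $p$-adic valuation $v_p$. Consequently $w_0 := v^* \cap F(X)$ restricts to $v_p$ on $F$ and geometrically singles out a closed point $\bar a$ on the special fibre of a proper model of $X$ over $\mathcal{O}_F$. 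The $F$-valuation $w$ sought in the theorem is then the local valuation at an $F$-rational lift $a$ of $\bar a$, produced by Hensel's lemma on the smooth locus of the special fibre (or more generally an $F$-rational specialization locus, in higher dimensions).

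The hardest step, I expect, is precisely this translation between the rank-one ``vertical'' valuation $v$ on $M$, which has finite residue field and value group $\mathbb{Z}$, and the geometric ``horizontal'' $F$-valuation $w$ on $F(X)$, whose value group is $\mathbb{Z}^{\dim X}$ and whose residue field equals $F$. Corollary \ref{pqmeta} produces only the former, so bridging the gap requires injecting the algebraic geometry of $X/\mathcal{O}_F$, via the observation that the specialization recorded by $w_0$ meets the smooth locus at a residually rational point of the special fibre. Uniqueness of $w$ then follows because the intersection of decomposition groups of two independent $F$-valuations in $G_{F(X)}(p,q)$ cannot contain a subgroup projecting isomorphically onto $G_F(p,q)$, by the independence of their inertia groups. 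The final assertion for $X$ a curve is immediate, since any rank-one valuation on $F(X)$ that is trivial on $F$ with residue field $F$ is the local valuation at a unique point $a \in X(F)$, and so $s$ lies over $a$.
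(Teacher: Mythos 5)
Your setup matches the paper's: let $M$ be the fixed field of $s(G_F(p,q))$ inside $F(X)(p,q)$, observe $G_M(p,q)\cong G_F(p,q)$, and invoke Corollary \ref{pqmeta} to get a rank-one $(p,q)$-henselian valuation $v$ on $M$ with $\Gamma_v\cong\mathbb{Z}$ and finite residue field. The divergence, and the gap, is in how you go from this ``vertical'' valuation to the required $F$-valuation. You propose to restrict $v$ to $F(X)$, take a proper model of $X$ over $\mathcal{O}_F$, find the center $\bar a$ of the restricted valuation on the special fibre, and then lift $\bar a$ via Hensel's lemma. This does not work as stated: nothing guarantees that $\bar a$ lies in the smooth locus of the special fibre, nor that $\bar a$ is residually rational over $F'v_p$. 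These are precisely the obstructions that make the geometric specialization approach delicate, and the whole point of the valuation-theoretic machinery in the paper is to sidestep them. What the paper actually does is purely valuation-theoretic and model-free: it restricts $v$ to $F'(X)$ to get $w$, lets $H\le\Gamma_w$ be the subgroup generated by $w(\pi)$ with $\pi$ a uniformizer of $F'$ algebraic over $\mathbb{Q}$, uses completeness of $F'$ (hence maximal completeness, hence no nontrivial immediate extensions) to deduce $H\ne\Gamma_w$, and then coarsens $w$ by $H$ to produce $w'$. Because $w'(\pi)=0$, the coarsening $w'$ is trivial on $F'$ with residue field $F'$, and as a coarsening of a $p$-henselian valuation it is itself $p$-henselian, so $s(G_F(p,q))\subset D_{w'}$. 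No model and no Hensel lifting is involved.

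Your uniqueness argument also does not quite work. You argue via ``independence of inertia groups of two independent $F$-valuations,'' but two $F$-valuations whose decomposition groups both contain $s(G_F(p,q))$ are not a priori independent; indeed the point is that they are \emph{comparable}, not independent, because they are both $p$-henselian with residue field $F$ (which is not $p$-closed), and the canonical $c$-henselian machinery (Proposition \ref{canonicalconstruction} applied to $\cat_{p,q}$) forces any two such to be comparable. Once comparability is established, the paper observes that the quotient valuation between them is a $p$-henselian valuation on an algebraic extension of $F'$ with residue field $F'$, hence trivial, giving equality. So the structure of the correct uniqueness proof is comparability plus triviality of a quotient, not independence.
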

\begin{proof}
Let $s:G_F(p,q) \rightarrow G_{F(X)}(p,q)$ be a section, and let $K$ be the fixed field in $F(X)(p)$ of $s(G_F(p,q))$. Then $G_K(p,q) \simeq s(G_F(p,q)) \simeq G_F(p,q)$. By Theorem \ref{keyprop} there is a finite extension $F'/ \mathbb{Q}_p$ and a valuation $v$ on $K$ with value group $\Z$ and residue field isomorphic to $F'v_p$, where $v_p$ is the $p$-adic valuation on $F'$. Let $\pi$ be a uniformizer of $F'$ with respect to $v_p$ which is algebraic over $\mathbb{Q}$. Then $v(\pi)$ is a minimal positive element in $\Gamma_v$. Consider the restriction $w$ of $v$ to $F'(X)$. Then $w$ still has residue field $F'v_p$ and $w(\pi)$ is still minimal positive. Let $H$ be the subgroup of $\Gamma_w$ generated by $w(\pi)$.

Since $F'$ is complete, it admits no immediate extensions of transcendence degree $n$. Therefore $H \not= \Gamma_w$. Let $w'$ be the valuation obtained from $w$ with value group $\Gamma_w / H$. By construction, $w'$ is trivial on $F'$ and has residue field $F'$, since $w'(\pi)=0$. Since $w'$ is a coarsening of a $p$-henselian valuation, it is itself $p$-henselian. Hence $w'$ is an $F'$-valuation with $s(G_F(p)) \subset D_{w'}$. 

To show uniqueness, suppose $w''$ is another valuation such that $s(G_{F'}(p,q)) \subset D_{w''}$. Then as both are $p$-henselian with residue field not $p$-closed, they are comparable, by Proposition \ref{canonicalconstruction} applied to the class $\cat_{p,q}$. If $w'$ is a coarsening of $w''$, then the quotient valuation $w''/w'$ is a $p$-henselian valuation on an algebraic extension of $F'$ with residue field $F'$, and hence must be trivial. That is, $w''=w'$. The argument is identical if $w''$ is a coarsening of $w'$.
\end{proof}

\begin{cor}
Suppose $X$ is a smooth, projective variety over $F$, where $F$ is a finite extension of $\mathbb{Q}_p$ containing $\zeta_p$ and $\zeta_q$. Then there is a section of (8) if and only if $X(F) \not = \emptyset$.
\end{cor}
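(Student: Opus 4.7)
My plan is to treat the two directions separately, noting that one is essentially a restatement of the construction from the introduction while the other is the content of Theorem \ref{bsc} combined with the valuative criterion of properness. The forward direction, that $X(F) \neq \emptyset$ implies the existence of a section, is exactly the bouquet construction sketched in the introduction of the paper: given $a \in X(F)$, the local ring $\mathcal{O}_{X,a}$ determines an $F$-valuation $v_a$ on $F(X)$ (trivial on $F$, with residue field $F$), and Hilbert decomposition theory applied to any prolongation of $v_a$ to the relevant $(p,q)$-extension of $F(X)$ yields a decomposition group whose projection to $G_F(p,q)$ is an isomorphism, producing the desired section. So nothing new needs proving here.

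For the reverse direction, I would start by applying Theorem \ref{bsc} to the given section $s$ of (8): this produces a unique $F$-valuation $v$ of $F(X)$ over which $s$ lies. By definition of an $F$-valuation, $v$ is trivial on $F$ and has residue field $F(X)v = F$. The remaining task is to extract an actual $F$-point of $X$ from this valuation, and this is where I would invoke the valuative criterion of properness. Since $X$ is projective (hence proper) over $F$, the morphism $\operatorname{Spec}(F(X)) \to X$ corresponding to the generic point extends uniquely to a morphism $\operatorname{Spec}(\mathcal{O}_v) \to X$ of $F$-schemes. The image $x \in X$ of the closed point has residue field $\kappa(x)$ that embeds as an $F$-subalgebra into $F(X)v = F$; since $\kappa(x) \supseteq F$ this forces $\kappa(x) = F$, i.e. $x \in X(F)$.

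There is no serious obstacle beyond Theorem \ref{bsc}, which handles all the valuation-theoretic content; the corollary is essentially a packaging of that theorem with the valuative criterion. The only mild subtlety worth flagging is that one should verify that the extension $\operatorname{Spec}(\mathcal{O}_v) \to X$ really is an $F$-morphism (so that the residue field of the image contains $F$), but this is automatic because both $X$ and $\operatorname{Spec}(\mathcal{O}_v)$ are $F$-schemes and the generic-point morphism is $F$-linear to begin with. Thus the corollary follows by combining the bouquet construction of the introduction with Theorem \ref{bsc} and one application of properness.
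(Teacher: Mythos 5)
Your reverse-direction argument is correct and logically tight given the statement of Theorem \ref{bsc}: an $F$-valuation $v$ trivial on $F$ with $F(X)v = F$, plus the valuative criterion of properness for the projective $X$, does give a point of $X(F)$, and your check that the extension $\operatorname{Spec}(\mathcal{O}_v)\to X$ is an $F$-morphism is the right thing to verify. The paper's own proof of this corollary is essentially the same in spirit — its ``choose a generic point in $F(X)$ with positive value'' is the informal version of your properness argument — but it does something extra that you sidestep. In the actual proof of Theorem \ref{bsc} the constructed valuation $w'$ is trivial on a field $F'$ that a priori is only abstractly isomorphic to a $p$-adic field with $G_{F'}^{c}\simeq G_F^{c}$ (recall Theorem \ref{keyprop} only returns $F'=F$ in the $\cat_{solv}$ case, not the $\cat_{p,q}$ case used in Theorem \ref{bsc}). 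The corollary's proof in the paper therefore first obtains a point in $X(F')$ and then uses that the restriction map $G_K(p)\to G_F(p)$ is an isomorphism to conclude $F$ is relatively algebraically closed in $K$, whence the point actually lies in $X(F)$. You avoid this by trusting the theorem's conclusion verbatim (``$F$-valuation''), which is formally fine but hides the one genuinely non-trivial step the paper's corollary proof is doing. If you want a proof that stands on its own even when read against the details of Theorem \ref{bsc}'s proof, you should add that relative-algebraic-closure argument. Your forward direction via the bouquet construction matches the paper; just note that for $\dim X>1$ the local ring $\mathcal{O}_{X,a}$ is not itself a valuation ring, so one takes any $F$-valuation of $F(X)$ dominating it with residue field $F$.
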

\begin{proof}
Note that the valuation $w'$ of Proposition \ref{bsc} defines an $F'$-rational place of $F(X)$, and hence gives rise to a point in $X(F')$. Indeed, we may always choose a generic point in $F(X)$ with positive value. Its image under the place gives a rational point $a \in X(F')$. Since the restriction map $G_K(p) \rightarrow G_F(p)$ is an isomorphism, $F$ is relatively algebraically closed in $K$, and because $X$ is defined over $F$, in fact $a \in X(F)$, as desired. 
\end{proof}

\begin{cor}
Suppose $X$ is a smooth, projective \emph{curve} over $F$, where $F$ is a finite extension of $\mathbb{Q}_p$ containing $\zeta_p$ and $\zeta_q$. Then every section of (8) lies over a unique $F$-rational point $a \in X(F)$.
\end{cor}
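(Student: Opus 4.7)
The plan is to reduce the statement to Theorem \ref{bsc} together with the standard geometric dictionary between $F$-valuations on the function field of a smooth projective curve and $F$-rational points of that curve. By Theorem \ref{bsc}, any section $s$ of the sequence (8) determines a \emph{unique} $F$-valuation $v$ of $F(X)$ such that $s(G_F(p,q)) \subset D_v$, so the content of this corollary is really the translation step: when $\dim X = 1$, the set of non-trivial $F$-valuations of $F(X)$ is canonically in bijection with $X(F)$.

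First, I would unpack what it means to be an $F$-valuation: $v$ is trivial on $F$ and $F(X)v = F$. Since $v$ is trivial on the ground field, its restriction to the coordinate ring of any affine open of $X$ defines an $F$-algebra homomorphism into a valuation ring with residue field $F$. Because $X$ is projective, the valuative criterion of properness produces a unique morphism $\mathrm{Spec}(\mathcal{O}_v) \to X$ extending the generic point $\mathrm{Spec}(F(X)) \to X$, and the image of the closed point is an $F$-rational point $a \in X(F)$. Because $X$ is smooth of dimension one, the local ring $\mathcal{O}_{X,a}$ is itself a DVR of $F(X)$ with residue field $F$, and one checks (using that $\mathcal{O}_{X,a} \subset \mathcal{O}_v$ combined with minimality of rank-one valuations on a one-dimensional function field) that $v$ coincides with the canonical valuation $v_a$ attached to $a$.

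Conversely, each point $a \in X(F)$ clearly gives rise to an $F$-valuation $v_a$ of $F(X)$, and distinct points give distinct valuations because their centers on $X$ differ. This yields the desired bijection $X(F) \leftrightarrow \{F\text{-valuations of }F(X)\}$. Composing this with the assignment $s \mapsto v$ from Theorem \ref{bsc} produces the unique $F$-point $a \in X(F)$ such that $s$ lies over $v = v_a$, hence over $a$, by the very definition of the bouquet of sections attached to a point given in the introduction.

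The only genuinely non-trivial ingredient is Theorem \ref{bsc} itself; beyond that the main potential pitfall is verifying that the $F$-valuation supplied by the theorem is actually non-trivial (i.e.\ does not reduce to the embedding $F \hookrightarrow F(X)$), for otherwise there would be no associated point. However, this is already built into the proof of Theorem \ref{bsc}: the valuation $w'$ constructed there is obtained by coarsening a non-trivial $p$-henselian valuation $v$ on $K \supset F(X)$ by the convex subgroup generated by a uniformizer, and $F'$ being complete forbids $H = \Gamma_w$, so $w'$ is non-trivial on $F(X)$. With that point secured, the corollary follows.
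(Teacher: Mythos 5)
Your proof is correct and follows the second of the two routes the paper itself offers in its one-line proof, namely the classical bijection between non-trivial $F$-valuations of $F(X)$ and $F$-rational points of a smooth projective curve, combined with the existence-and-uniqueness statement of Theorem \ref{bsc}; the paper merely cites this as ``a classical result'' (its primary citation being Lemma 1.7 of \cite{koe3} together with the previous corollary), whereas you spell out the valuative criterion of properness and the maximality of the DVR $\mathcal{O}_{X,a}$. One small simplification: the non-triviality concern you raise at the end is vacuous by definition, since an $F$-valuation has residue field $F$, whereas the trivial valuation on $F(X)$ has residue field $F(X) \supsetneq F$, so no appeal to the internal construction in the proof of Theorem \ref{bsc} is needed.
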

\begin{proof}
This follows from the above corollary at once using Lemma 1.7 from \cite{koe3}. Alternatively, it is a classical result that for curves, all $F$-valuations come from $F$-rational points.
\end{proof}

If we had used maximal solvable quotients instead of maximal $(p,q)$-quotients, we would obtain all the same results, except in this case no extra assumptions need to be made on the presence of roots of unity. In particular:

\begin{cor}
Suppose $X$ is a smooth, projective curve over $F$, where $F$ is a finite extension of $\mathbb{Q}_p$. Then every section of the exact sequence
\begin{equation}
1 \rightarrow G_{\overline{F}(X)}^{solv} \rightarrow G_{F(X)}^{solv} \rightarrow G_F^{solv} \rightarrow 1 \nonumber
\end{equation}
lies over a unique $F$-rational point.
\end{cor}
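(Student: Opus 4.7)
The plan is to translate the proof of Theorem \ref{bsc} together with its curve corollary from the $(p,q)$-meta-abelian setting into the solvable setting, with essentially no modification. The crucial enabler is that Theorem \ref{keyprop} applied to the canonical class $\cat_{solv}$ needs no assumption on roots of unity in $F$ and furthermore produces $F' = F$ rather than merely some $p$-adic $F'$ with the same Galois-theoretic invariants.

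Concretely, given a section $s\colon G_F^{solv} \to G_{F(X)}^{solv}$, I would let $K$ be the fixed field of $s(G_F^{solv})$ inside $F(X)^{solv}$, so that $G_K^{solv} \simeq s(G_F^{solv}) \simeq G_F^{solv}$. Applying Theorem \ref{keyprop} with $\cat = \cat_{solv}$ then yields a solv-henselian valuation $v$ on $K$ with $\Gamma_v \simeq \mathbb{Z}$ and $Kv \simeq Fv_p$. I would then mimic the construction in Theorem \ref{bsc}: restrict $v$ to $F(X)$ to obtain $w$, choose a uniformizer $\pi$ of $F$ algebraic over $\mathbb{Q}$, and form the coarsening $w'$ of $w$ whose value group is $\Gamma_w / \langle w(\pi) \rangle$. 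Since $F$ is complete and admits no immediate transcendental extensions, this coarsening is nontrivial on $F(X)$; by construction $w'$ is trivial on $F$ with residue field $F$, and as a coarsening of a solv-henselian valuation it is itself solv-henselian. Hence $w'$ is an $F$-valuation with $s(G_F^{solv}) \subset D_{w'}$.

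Uniqueness of $w'$ among $F$-valuations compatible with $s$ follows by the same argument as in Theorem \ref{bsc}: any competitor $w''$ is also solv-henselian with residue field $F$ (which is not solv-closed), so Proposition \ref{canonicalconstruction} applied to $\cat_{solv}$ forces $w'$ and $w''$ to be comparable, and the quotient valuation is then a solv-henselian valuation on an algebraic extension of $F$ with residue field $F$, hence trivial. Finally, since $X$ is a smooth projective curve, the classical fact that every $F$-valuation on $F(X)$ is of the form $v_a$ for a unique $a \in X(F)$ (alternatively, Lemma 1.7 of \cite{koe3}) shows that $s$ lies over a unique $F$-rational point. The substantive obstacle has already been dealt with inside Theorem \ref{keyprop}, and the remaining work is purely formal.
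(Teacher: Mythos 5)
Your proposal is correct and is essentially the paper's own argument: the paper proves this corollary simply by remarking that the $(p,q)$-meta-abelian proof of Theorem~\ref{bsc} and its curve corollary carry over verbatim with $\cat_{solv}$ in place of $\cat_{p,q}$, with the bonus (which you correctly exploit) that Theorem~\ref{keyprop} then needs no roots-of-unity hypothesis and gives $F'=F$ outright. You have merely unpacked that remark into an explicit argument, following the same steps and citing the same lemmas as the paper does for Theorem~\ref{bsc}.
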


\bibliographystyle{crelle}
\bibliography{test.bib}

K.~Str\o mmen, \textsc{Mathematical Institute, Oxford University, Oxford, OX2 6GG}\par\nopagebreak
  \textit{E-mail address}: \texttt{strommen@maths.ox.ac.uk}

\end{document}